\title[Semiconcavity, stability and exponential convergence of Sinkhorn's algorithm]{A semiconcavity approach to stability of entropic plans and exponential convergence of Sinkhorn's algorithm }
\author{Alberto Chiarini}
\address{Università degli studi di Padova}
\curraddr{Dipartimento di Matematica ``Tullio Levi-Civita'', 35121 Padova, Italy.}
\email{chiarini@math.unipd.it}
\thanks{}
\author{Giovanni Conforti}
\address{Università degli studi di Padova}
\curraddr{Dipartimento di Matematica ``Tullio Levi-Civita'', 35121 Padova, Italy.}
\email{gconfort@math.unipd.it}
\author{Giacomo Greco}
\address{Università degli studi di Roma Tor Vergata}
\curraddr{RoMaDS - Department of Mathematics, 00133RM Rome, Italy.}
\email{greco@mat.uniroma2.it}
\thanks{GG is supported by the PRIN project GRAFIA (CUP:
E53D23005530006) and MUR Departement of Excellence Programme 2023-2027 MatMod@Tov (CUP: E83C23000330006). GG is grateful to the Department of Mathematics  ``Tullio Levi-Civita'' for its hospitality, where part of this work has been  carried out.}
\author{Luca Tamanini}
\address{Università Cattolica del Sacro Cuore}
\curraddr{Dipartimento di Matematica e Fisica ``Niccol\`o Tartaglia'', I-25133 Brescia, Italy.}
\email{luca.tamanini@unicatt.it}
\thanks{Alberto Chiarini, Giovanni Conforti, Giacomo Greco and Luca Tamanini are associated to INdAM (Istituto Nazionale di Alta
Matematica ``Francesco Severi'') and the group GNAMPA}
\begin{document}

\setlength{\abovedisplayskip}{10pt} 
\setlength{\belowdisplayskip}{10pt}
\setlength{\abovedisplayshortskip}{10pt}
\setlength{\belowdisplayshortskip}{10pt}
\begin{abstract}
 We study stability of optimizers and convergence of Sinkhorn's algorithm for the entropic optimal transport problem.
 In the special case of the quadratic cost, our stability bounds imply that if one of the two entropic potentials is semiconcave, then the relative entropy between optimal plans  is controlled by the squared Wasserstein distance between their marginals.  When employed in the analysis of Sinkhorn's algorithm, this result gives a natural sufficient condition for its exponential convergence, which does not require the ground cost to be bounded. By controlling from above the Hessians of Sinkhorn potentials in examples of interest, we obtain new exponential convergence results. For instance, for the first time we obtain exponential convergence for log-concave marginals and quadratic costs for all values of the regularization parameter, based on semiconcavity propagation results. Moreover, the convergence rate has a linear dependence on the regularization: this behavior is sharp and had only been previously  obtained for compact distributions  \cite{chizat2024sharper}.  These optimal rates are also established in situations where one of the two marginals does not have subgaussian tails. Other interesting new applications include  subspace elastic costs, weakly log-concave marginals, marginals with light tails (where, under reinforced assumptions, we manage to improve the rates obtained in \cite{eckstein2023hilberts}), the case of Lipschitz costs with bounded Hessian, and compact Riemannian manifolds.

 \noindent\textbf{Keywords: }{Entropic Optimal Transport, Sinkhorn's algorithm, Entropic stability, Semiconcavity propagation}
\end{abstract}

\maketitle
\subsection*{Mathematics Subject Classification (2020)}{49Q22, 68Q87, 68W40, 60E15, 90C25}

%-----------------
% INTRO.TEX
%-----------------

\section{Introduction}
Given two Polish spaces $\cX,\cY$, marginal probability distributions $\rho \in \cP(\cX), \nu \in \cP(\cY)$, and a cost function $c:\cX\times\cY \rightarrow \mathbb{R}$, the entropic optimal transport problem (EOT) reads as 
\be\label{EOT}\tag{EOT}
\inf_{\pi \in \Pi(\rho, \nu)}  \int_{\cX\times\cY} c\, \De\pi + \varepsilon \,\scrH(\pi | \rho \otimes \nu),
\ee
where $\Pi(\mu,\nu)$ is the set of couplings of $\mu$ and $\nu$, $\scrH$ denotes the Kullback-Leibler divergence (also known as relative entropy), and $\varepsilon>0$ is a regularization parameter.
The study of EOT has greatly intensified since the observation \cite{cuturi2013sinkhorn} that adding an entropic penalty in the objective function of the Monge-Kantorovich problem (corresponding to $\varepsilon=0$ in \eqref{EOT}) leads to a more convex, more regular, and numerically more tractable optimization task, 
thus opening new perspectives for the computation of transport distances in machine learning and beyond, see \cite{peyre2019computational}. Much of the success of entropic regularization techniques in applications can be attributed to the fact that EOT can be solved by means of an exponentially-fast matrix scaling algorithm, Sinkhorn's algorithm, and to the fact that EOT is more stable than the Monge-Kantorovich problem with respect to variations in the cost or marginals. Because of this, considerable efforts have been made over the last decade to turn these intuitions into sound mathematical statements. This has produced many important contributions which we shall discuss in more detail below, nonetheless, several open questions remain. For example, exponential convergence of Sinkhorn algorithm is not well understood when both the marginals' support and the ground cost are unbounded, as it is the case in the landmark example of the quadratic cost with Gaussian marginals. This article aims at showing that semiconcavity bounds for entropic and Sinkhorn potentials play a key role in answering some of these questions. We now provide the reader with some background on EOT, and then proceed to describe our main contributions.

\subsection{Background on EOT and Sinkhorn algorithm} The entropic optimal transport problem is a regularized version of the Monge-Kantorovich problem
\be\label{MK}\tag{MK}
     \inf_{\pi \in \Pi(\rho, \nu)}  \int_{\cX\times\cY} c\,\De \pi\,.
\ee
In the case when $\cX=\cY$ is endowed with a distance $\sfd$ and $c(x,y)=\sfd(x,y)^2$, the optimal value of \eqref{MK} coincides with the squared Wasserstein distance of order two, which we denote by $\bfW_2^2(\mu,\nu)$. 
It is known \cite[Thm 4.2]{Marcel:notes} that under mild integrability conditions on the cost $c$, there exist two functions $\phieot{\nu}:\cX \rightarrow \overline{\mathbb{R}},\psieot{\nu}:\cY \rightarrow \overline{\mathbb{R}}$ with $\overline{\mathbb{R}}=\mathbb{R}\cup\{+\infty\}$, called entropic potentials, such that the unique optimal plan $\pieot{\nu}$ for \eqref{EOT} admits the Radon--Nikod\'ym density
\be\label{eq:phi_psi_dec}
\frac{\De\pieot{\nu}}{\De(\rho\otimes\nu)}(x,y)= \exp\biggl(-\frac{c(x,y)+\phieot{\nu}(x)+\psieot{\nu}(y)}{\varepsilon}\biggr)\,, \quad \rho\otimes\nu \, \text{a.e.}
\ee
For any measurable $\varphi:\cX\rightarrow \overline{\mathbb{R}}$ and $\psi:\cY\rightarrow\overline{\mathbb{R}}$ we set
\be
\begin{aligned}
\Phi^\rho_0(\varphi)(y) &:=-\varepsilon\log\int_{\cX}\exp\biggl(-\frac{c(x,y)+\varphi(x)}{\varepsilon}\biggr) \rho(\De x)\,,\\
\Psi^\nu_0(\psi)(x) &:=  -\varepsilon\log \int_{\cY} \exp\biggl(-\frac{c(x,y)+\psi(y)}{\varepsilon}\biggr) \nu(\De y)\,.
\end{aligned}
\ee
Imposing that a probability measure of the form \eqref{eq:phi_psi_dec} belongs to $\Pi(\rho,\nu)$ yields the following non-linear system, also known as  Schr\"odinger system,
\be\label{eq:schr_syst}
\left\{
\begin{aligned}
\phieot{\nu} &=  -\Psi_0^{\nu}(\psieot{\nu})\,,\\ 
\psieot{\nu}&=-\Phi^{\rho}_0(\phieot{\nu})\,.
\end{aligned}
\right.
\ee
Note that a priori the identities \eqref{eq:schr_syst}  are valid only $\rho$-almost surely and $\nu$-almost surely respectively. However, since $\Psi_0^\nu$ and $\Phi^\rho_0$ are well-defined even outside the supports, we obtain extensions of $\phieot{\nu}$, $\psieot{\nu}$ taking values in $\bbR\cup\{+\infty\}$ such that \eqref{eq:schr_syst} 
holds everywhere on $\cX\times\cY$.
Sinkhorn's algorithm solves \eqref{eq:schr_syst} as a fixed point problem, \ie, it constructs two sequences of potentials $(\varphi^n_\varepsilon, \psi^n_\varepsilon)$ defined through the iterations
\be\label{eq:sink:iterate}
\begin{cases}
\varphi^{n+1}_\varepsilon &=  -\Psi_0^\nu(\psieot{n})\,,\\
\psi^{n+1}_{\varepsilon} &=-\Phi^\rho_0(\phieot{n+1})\,.
\end{cases}
\ee
Typically, the initialization is $\phieot{0}=0$, but other choices are possible. 

\subsection{Entropic stability for optimal plans}
In this article, we consider a broad setting in which we require the target space $(\cY,\mathfrak{g})$ to be a (possibly unbounded) connected Riemannian manifold without boundary endowed with its Riemannian metric $\mathfrak{g}$ whose associated Riemannian distance we denote $\sfd$. To fix ideas, 
one can simply take $\cY=\mathbb{R}^d$ equipped with the standard Euclidean metric. We will often consider this setting when applying the abstract results to concrete examples in \Cref{sec:esempi}. To state our main results, we need  to introduce a notion of $\Lambda$-semiconcavity by saying that a differentiable function $f:(\cY,\mathfrak{g})\rightarrow \mathbb{R}$ is $\Lambda$-semiconcave if for all $z,y\in \cY$ and for any geodesic $(\gamma_t)_{t\in[0,1]}$ such that $\gamma_0=y$ and $\gamma_1=z$ we have
\be\label{def:Lsemiconcave}
    f(z)-f(y)\leq \langle\nabla f(y),\,\dot{\gamma_0}\rangle_{\mathfrak{g}}+\frac{\Lambda}{2}\,\sfd(z,y)^2\,.
\ee
In the Euclidean setting, where geodesics are straight lines, this is equivalent to requiring that for all $z,y\in \cY$  it holds 
\bes
f(z)-f(y)\leq \langle\nabla f(y),\,z-y\rangle+\frac{\Lambda}{2}\,\sfd(z,y)^2\,.
\ees

The first contribution of this article is an abstract $\scrH$-stability bound for optimal plans associated with different marginal distributions. 
For this result to hold, we require to be able to differentiate under the integral sign in \eqref{eq:schr_syst}, thus being able to write the gradient of $\psieot{\nu}$ as a conditional mean.
\begin{assumption}\label{ass:differentiability}
   Assume $c(x,\cdot)$ to be $\cC^1(\cY)$ and that we can differentiate under the integral sign in \eqref{eq:schr_syst}. That is, the formula  
\begin{equation}\label{eq:diff:assumpotion}
\nabla\psieot{\nu}(y)=-\int_\cX\nabla_2 c(x,y)\,\pieot{\nu}(\De x|y)\,,
\end{equation}
holds in the sense of weak derivatives $\nu$-a.e.
\end{assumption}
The identity \eqref{eq:diff:assumpotion} is  well-known and often used the EOT literature. Particularly, it holds (even in a strong sense) in a variety of examples including the ones we are going to discuss in section \Cref{sec:esempi}. For readers' convenience we show at~\Cref{lem_der_under_integral_sign} in \Cref{sec:app:integrability:weak} that  \Cref{ass:differentiability} holds under an integrability condition involving $\rho$, the  cost $c$ and its first derivative w.r.t. $y\in\cY$.

\begin{theorem}[KL stability of optimal plans]\label{thm:entropic_stability}
Let \Cref{ass:differentiability} hold and $\pieot{\nu},\,\pieot{\mu}$  be the unique optimizers in \eqref{EOT} for the set of marginals $(\rho,\nu)$ and $(\rho,\mu)$. If there exists $\Lambda>0$ such that 
\bes
y\mapsto c(x,y)+\psieot{\nu}(y)
\ees
is $\Lambda$-semiconcave uniformly in $x\in\supp(\rho)$, then
\bes
\scrH(\pieot{\mu}|\pieot{\nu})\leq \scrH(\mu|\nu)+\frac{\Lambda}{2{\varepsilon}}\,\bfW_2^2(\mu,\nu)\,.
\ees
\end{theorem}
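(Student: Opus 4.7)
The plan is to build an admissible competitor $\tilde\pi\in\Pi(\rho,\mu)$ by transporting the second marginal of $\pieot{\nu}$ through an optimal $\bfW_2^2$-coupling from $\nu$ to $\mu$, bound its cost via the $\Lambda$-semiconcavity hypothesis, and then combine this with primal optimality of $\pieot{\mu}$ and the data-processing inequality to control the difference of entropic potentials paired with the corresponding marginals. Throughout, one may assume $\mu\ll\nu$, since otherwise $\scrH(\mu|\nu)=+\infty$ and there is nothing to prove.

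The first step is a reduction. From the explicit form \eqref{eq:phi_psi_dec} of the optimal plans, a direct computation of $\De\pieot{\mu}/\De\pieot{\nu}$ followed by integration against $\pieot{\mu}$ yields the identity
\begin{equation*}
\scrH(\pieot{\mu}|\pieot{\nu}) - \scrH(\mu|\nu) \;=\; \frac{1}{\varepsilon}\left[\int(\phieot{\nu}-\phieot{\mu})\,\De\rho \;+\; \int(\psieot{\nu}-\psieot{\mu})\,\De\mu\right],
\end{equation*}
so it suffices to bound the bracket by $\frac{\Lambda}{2}\bfW_2^2(\mu,\nu)$. To this end, pick an optimal $\gamma\in\Pi(\nu,\mu)$ for $\bfW_2^2$, disintegrate $\gamma(\De y,\De y')=\nu(\De y)\gamma_y(\De y')$, and set $\tilde\pi(\De x,\De y'):=\int \pieot{\nu}(\De x,\De y)\gamma_y(\De y')\in\Pi(\rho,\mu)$, together with the lifted coupling $\tilde\Pi(\De x,\De y,\De y'):=\pieot{\nu}(\De x,\De y)\gamma_y(\De y')$. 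Apply the $\Lambda$-semiconcavity of $y\mapsto c(x,y)+\psieot{\nu}(y)$ along the geodesic from $y$ to $y'$ and integrate against $\tilde\Pi$: the quadratic term contributes exactly $\frac{\Lambda}{2}\bfW_2^2(\mu,\nu)$, while the linear term, after conditional integration in $x$ given $y$, reduces to the pairing of $\dot\gamma_0$ with $\nabla\psieot{\nu}(y)+\int\nabla_y c(x,y)\,\pieot{\nu}(\De x|y)$. This bracket vanishes by the first-order Schr\"odinger identity obtained by differentiating $\psieot{\nu}=-\Phi^\rho_0(\phieot{\nu})$ under the integral sign. Consequently,
\begin{equation*}
\int c\,\De\tilde\pi - \int c\,\De\pieot{\nu} + \int\psieot{\nu}\,\De(\mu-\nu) \;\leq\; \frac{\Lambda}{2}\bfW_2^2(\mu,\nu).
\end{equation*}

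To conclude, note that the kernel construction of $\tilde\pi$ gives, via the data-processing inequality, $\scrH(\tilde\pi|\rho\otimes\mu)\leq\scrH(\pieot{\nu}|\rho\otimes\nu)$, and therefore primal optimality of $\pieot{\mu}$ tested against $\tilde\pi$ reads $\int c\,\De\pieot{\mu}+\varepsilon\scrH(\pieot{\mu}|\rho\otimes\mu)\leq\int c\,\De\tilde\pi+\varepsilon\scrH(\pieot{\nu}|\rho\otimes\nu)$. Rewriting $\int c\,\De\pieot{\nu}+\varepsilon\scrH(\pieot{\nu}|\rho\otimes\nu)=-\int\phieot{\nu}\De\rho-\int\psieot{\nu}\De\nu$ (and the analogous identity for $\mu$) and combining with the previous displayed estimate gives exactly the sought bound on the bracket. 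I expect the main subtlety to lie in the vanishing-of-the-linear-term step: it relies on the differentiability of $\psieot{\nu}$ and on the legitimacy of differentiating under the integral in $\Phi^\rho_0(\phieot{\nu})$, together with the interpretation of $\dot\gamma_0$ as an element of $T_y\cY$ in the Riemannian sense. These ingredients must be handled with care on the non-compact manifold $\cY$, but they are soft consequences of the semiconcavity hypothesis and the structure of the entropic potentials.
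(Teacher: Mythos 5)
Your proof is correct, and its two essential ingredients coincide with the paper's: the competitor $\tilde\pi$ obtained by gluing $\pieot{\nu}$ with an optimal $\bfW_2$-coupling of $\nu$ and $\mu$ is exactly the plan used in the paper, and your treatment of the linear term --- the cancellation $\int_{\cX}\nabla_2\bigl(c(x,y)+\psieot{\nu}(y)\bigr)\,\pieot{\nu}(\De x|y)=0$ coming from the gradient identity for $\psieot{\nu}$ --- is precisely the content of the paper's Lemma~\ref{lem:entropy_between_conditionals}. Where you diverge is in how optimality of $\pieot{\mu}$ is exploited. The paper characterizes $\pieot{\mu}$ as the entropic projection of $\pieot{\nu}$ onto $\Pi(\rho,\mu)$, i.e.\ $\scrH(\pieot{\mu}|\pieot{\nu})=\min_{\pi\in\Pi(\rho,\mu)}\scrH(\pi|\pieot{\nu})$, and then bounds $\scrH(\tilde\pi|\pieot{\nu})$ by disintegration and convexity of the relative entropy, so that every quantity in the argument is a KL divergence, well defined in $[0,+\infty]$. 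You instead test primal optimality of $\pieot{\mu}$ against $\tilde\pi$, control $\scrH(\tilde\pi|\rho\otimes\mu)$ by data processing (which is the same disintegration fact in disguise), and convert optimal values into potentials via the dual representation; this forces you to split $\int(\phieot{\nu}-\phieot{\mu})\oplus(\psieot{\nu}-\psieot{\mu})\,\De\pieot{\mu}$ and $\int c\,\De\pieot{\nu}+\varepsilon\scrH(\pieot{\nu}|\rho\otimes\nu)$ into separate marginal integrals of the potentials, so your route needs the potentials (and $\psieot{\nu}$ against $\mu$) to be individually integrable, an assumption the paper's purely entropic bookkeeping sidesteps. Under the standing integrability hypotheses guaranteeing \eqref{eq:phi_psi_dec} this is harmless, but it is the one place where your argument is genuinely less robust than the paper's, and it is worth flagging explicitly rather than leaving it implicit in the ``rewriting'' step.
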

The key step in the proof of \Cref{thm:entropic_stability} is the observation, made at Lemma \ref{lem:entropy_between_conditionals}, that the semiconcavity assumption  implies the following $\scrH$-bound between conditional distributions 
\bes
\scrH(\pieot{\nu}(\cdot|y)|\pieot{\nu}(\cdot|z)) \leq \frac{\Lambda}{2\varepsilon}\,\sfd^2(y,z)\,.
\ees
The desired conclusion is then obtained using the gluing method, which is often employed in works on the stability of both entropic and classical optimal transport, see \cite{eckstein2021quantitative, divol2024tight, letrouit2024gluing} for example.

\medskip

Let us also mention here that our proof strategy is robust and easily generalizes to settings where we consider different notions of semiconcavity. In particular, given any non-negative function $\omega\colon\cY\times\cY\to \rset$ we say that a differentiable function $f:(\cY,\mathfrak{g})\rightarrow \mathbb{R}$ is $(\Lambda,\omega)$-semiconcave if for all $z,y\in \cY$ and for any geodesic $(\gamma_t)_{t\in[0,1]}$ such that $\gamma_0=y$ and $\gamma_1=z$ we have
\bes
    f(z)-f(y)\leq \langle\nabla f(y),\,\dot{\gamma_0}\rangle_{\mathfrak{g}}+\frac{\Lambda}{2}\,\omega(z,y)\,.
\ees
Then, if $\bfW_\omega$ denotes the generalized Wasserstein functional induced by $\omega$, \ie, for any $\mu,\,\nu\in\cP(\cY)$ 
\bes
\bfW_\omega(\mu,\nu)\coloneqq \inf_{\pi\in\Pi(\mu,\nu)}\int_{\cY\times\cY} \omega(z,y)\,\De\pi(z,y)\,,
\ees
the entropic stability result stated in \Cref{thm:entropic_stability} immediately generalizes to the following
\begin{theorem}[KL stability of optimal plans generalized]\label{thm:entropic_stability:gen}
Let \Cref{ass:differentiability} hold and $\pieot{\nu},\,\pieot{\mu}$  be the  the unique optimizers in \eqref{EOT} for the set of marginals $(\rho,\nu)$ and $(\rho,\mu)$. If there exists $\Lambda>0$ and a non-negative function $\omega\colon\cY\times\cY\to \rset$ such that 
\bes
y\mapsto c(x,y)+\psieot{\nu}(y)
\ees
is $(\Lambda,\omega)$-semiconcave uniformly in $x\in\supp(\rho)$, then
\bes
\scrH(\pieot{\mu}|\pieot{\nu})\leq \scrH(\mu|\nu)+\frac{\Lambda}{2{\varepsilon}}\,\bfW_\omega(\mu,\nu)\,.
\ees
\end{theorem}

This general result covers a diverse range of marginals, whose log-densities might not decay as Gaussians. For instance, it applies to the quadratic cost $c(x,y)=|x-y|^2/2$ with marginals $\rho(\De x)\propto \exp(-|x|^q)\De x$ and $\nu(\De y)\propto \exp(-\sum_{i=1}^d|y_i|^p_p)\De y$ with $q\geq 2$ and $p\in(1,2)$ (with $\omega(z,y)=|z-y|^{1+\frac{p}{q}}$, cf.~\cite{gozlan2025global}).  

\subsubsection*{Literature review: stability} 
Many recent works study the stability  for entropic optimal transport with respect to variations in the marginal inputs. 
In \cite{lagg2022gradient}, we obtain entropic stability bounds in a negative Sobolev norm, for a general class of problems with costs induced by diffusions on Riemannian manifolds with Ricci curvature bounded from below (which includes the quadratic cost on $\bbRD$). The article \cite{eckstein2021quantitative} 
obtains a quantitative H\"older estimate between the Wasserstein distance of optimal plans and that of their marginals. This result applies to the quadratic cost, and to more general costs. Among the assumptions required, there is a transport-entropy inequality for marginals, which relates to Talagrand's inequality, see \ref{eq:TI} below. A more qualitative stability  is proven under mild hypothesis in \cite{ghosal2021stability}. 
On the dual side, Carlier and Laborde show in \cite{carlier2020differential} $\rmL^\infty$-Lipschitz bounds for potentials for multimarginal problems in a bounded setting. The authors of \cite{deligiannidis2021quantitative} succeed in controlling the $\rmL^\infty$-norm of the difference of entropic potentials with the Wasserstein distance of order one between marginals using an approach based on Hilbert's metric. Among their assumptions, there is boundedness of the cost function and compactness of the marginals' supports. Subsequently, \cite{carlier2024displacement} bounds the difference of potentials with the Wasserstein distance of order two of the respective marginals. The norm used to express these bounds depends on the smoothness and boundedness of the cost. For example, the authors show an $\rmL^\infty$-bound between the gradient of entropic potentials provided the cost is bounded with two bounded derivatives. For the quadratic cost, gradients of entropic potentials provide with good approximations of optimal transport maps (see \cite{greco2024thesis,malamut2023convergence} in unbounded settings and \cite{pooladian2021entropic} in semidiscrete settings), thus justifying the growing interest around the stability of \ref{EOT}.   \cite{divol2024tight} establishes Lipschitz bounds between the $\rmL^2$-norm of the difference of the gradients of entropic potentials and the Wasserstein distance of the marginals. Here, the dependence of the Lipschitz constant on the regularization parameter is polynomial, thus improving on earlier results, 
and marginals may have unbounded support. All these results are obtained using a functional inequality for tilt-stable probability measures, see \cite{chen2022localization} and \cite[Lemma 3.21]{bauerschmidt2024stochastic}. Their main assumption is that both entropic potentials have a bounded Hessian. For this reason, these stability results are the closest in spirit to ours and partially inspired our work. Nonetheless, there are some important differences: we only require a bound on the Hessian of one of the two potentials, and the scope of \Cref{thm:entropic_stability} is not restricted to the quadratic cost, but covers general semiconcave costs. Finally, we mention the works \cite{gallouet2022strong, letrouit2024gluing} since, even if they study the stability problem for unregularized optimal transport, they are somewhat connected to this work. In \cite[Prop.\ 8]{gallouet2022strong} the authors establish a link between the optimality gap  and the distance from the optimal plan, provided that the optimal transport map is generated by a $c$-concave potential. In \cite{letrouit2024gluing} covariance inequalities are employed to bound the distance between transport maps. As we shall see in the sequel of this article, and as already understood in \cite{fathi2019proof,chewi2022entropic,conforti2024weak}, these inequalities are valid tools for estimating the concavity properties of entropic potentials,  see also \cite{gozlan2025global} for very recent results in this direction.

\subsection{Exponential convergence of Sinkhorn's algorithm}

Sinkhorn's algorithm admits a primal interpretation as the iterated Bregman projection algorithm for the relative entropy functional, see \cite{benamou2015iterative}. To explain this, we introduce two sequences of plans, called Sinkhorn plans, as follows:
\be\label{densities:along:sink}
\frac{\De\pi^{n,n}}{\De (\rho\otimes\nu) } = \exp\Big(-\frac{c+\phieot{n}\oplus\psieot{n}}{\varepsilon}\Big)\,,\quad \frac{\De\pi^{n+1,n}}{\De( \rho\otimes\nu)} = \exp\Big(-\frac{c+\phieot{n+1}\oplus\psieot{n}}{\varepsilon}\Big)\,.
\ee
In the above, and in the rest of the paper, for given potentials $\varphi:\cX\rightarrow\bbR$, $\psi:\cY\rightarrow\bbR$  we set $\varphi\oplus\psi(x,y)=\varphi(x)+\psi(y)$. 
Then, $\pi^{n+1,n}$ may be viewed as the entropic projection of $\pi^{n,n}$ in the sense of Csisz\'ar \cite{csiszar1975divergence} on the set of plans having first marginal $\rho$. Likewise, $\pi^{n+1,n+1}$ is the entropic projection of $\pi^{n+1,n}$ on the set of plans having second marginal $\nu$. The second marginal $\nu^{n+1,n}$ of $\pi^{n+1,n}$ and the first marginal $\rho^{n,n}$ of $\pi^{n,n}$ are then given by 
\be\label{eq:wrong_marginals} 
\De \nu^{n+1,n}\coloneqq e^{-\nicefrac{(\psieot{n}-\psieot{n+1})}{\varepsilon}}\,\De\nu\,,\quad \De \rho^{n,n}\coloneqq e^{-\nicefrac{(\phieot{n}-\phieot{n+1})}{\varepsilon}}\,\De\rho\,.
\ee
The second general result of this article is a sufficient condition for the exponential convergence of Sinkhorn plans to the optimal plan $\pieot{\nu}$. To state it, let us recall that a probability measure $\nu\in\cP(\cY)$ is said to satisfy a Talagrand's inequality with constant $\tau$, \ref{eq:TI} for short, if 
\be\label{eq:TI}\tag{$\mathrm{TI}(\tau)$}
\bfW_2^2(\mu,\nu) \leq 2\tau\,\scrH(\mu|\nu)\,, \quad \forall \mu \in\cP(\cY)\,.
\ee

\begin{theorem}[Exponential convergence of Sinkhorn's algorithm]\label{cor:exp_conv_from_stab}
Let $c(x,\cdot)$ be $\cC^1(\cY)$ and assume that there exist $\Lambda\in(0,+\infty)$ and $N\geq 2$ such that 
\bes
y\mapsto c(x,y)+\psieot{n}(y)
\ees
is $\Lambda$-semiconcave uniformly in $x\in\supp(\rho)$ and $n\geq N-1$.
\ben[label=(\roman*)] 

\item\label{item:exp_conv_from_stab_i} If there exists $\tau\in(0,+\infty)$ such that $\nu$ satisfies \ref{eq:TI}, then for all $n\geq N-1$
\bes
 \scrH(\pieot{\nu}|\pi^{n+1,n+1})\leq \scrH(\pieot{\nu}|\pi^{n+1,n})\leq  \bigg(1-\frac{\min\{ \tau\Lambda,\,\varepsilon\}}{\min\{ \tau\Lambda,\,\varepsilon\}+\tau\Lambda}\bigg)^{(n-N+2)} \scrH(\pieot{\nu}|\pi^{0,0})\,.
\ees

\item\label{item:exp_conv_from_stab_ii} If there exists $\tau\in(0,+\infty)$ such that $\nu^{n,n-1}$ satisfies \ref{eq:TI} for all $n\geq N$, then 
\bes 
 \scrH(\pieot{\nu}|\pi^{n+1,n+1})\leq \scrH(\pieot{\nu}|\pi^{n+1,n}) \leq \bigg(1-\frac{  \varepsilon }{\varepsilon+\tau \Lambda}\bigg)^{(n-N+1)} \scrH(\pieot{\nu}|\pi^{0,0})
\ees
holds for all $n\geq N$.
\een
\end{theorem}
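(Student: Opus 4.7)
\emph{Strategy.} My plan is to reinterpret the half-step Sinkhorn iterate $\pi^{n+1,n}$ as the \eqref{EOT} optimizer for a perturbed pair of marginals, so that Theorem~\ref{thm:entropic_stability} applies with $\psieot{n+1}$ in the role of $\psieot{\nu}$. A direct computation of Radon--Nikod\'ym derivatives, based on \eqref{eq:wrong_marginals} and the Sinkhorn update $\phieot{n+1}=-\Psi_0^\nu(\psieot{n})$, shows that
\bes
\frac{\De\pi^{n+1,n}}{\De(\rho\otimes\nu^{n+1,n})}=\exp\biggl(-\frac{c+\phieot{n+1}\oplus\psieot{n+1}}{\varepsilon}\biggr),
\ees
and that $(\phieot{n+1},\psieot{n+1})$ satisfies the Schr\"odinger system \eqref{eq:schr_syst} for the marginals $(\rho,\nu^{n+1,n})$. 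Hence $\pi^{n+1,n}$ is the unique \eqref{EOT} optimal plan for $(\rho,\nu^{n+1,n})$, and the semiconcavity hypothesis on $c+\psieot{n+1}$ (in force as soon as $n+1\geq N$) is exactly what Theorem~\ref{thm:entropic_stability} needs.

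Applying Theorem~\ref{thm:entropic_stability} with reference marginal $\nu^{n+1,n}$ and perturbed marginal $\nu$ then yields
\bes
\scrH(\pieot{\nu}|\pi^{n+1,n})\leq \scrH(\nu|\nu^{n+1,n})+\tfrac{\Lambda}{2\varepsilon}\bfW_2^2(\nu,\nu^{n+1,n}).
\ees
I next invoke the Pythagorean identity for entropic (Csisz\'ar) projections: since $\pi^{n+1,n+1}$ is the KL--projection of $\pi^{n+1,n}$ onto the set of couplings with second marginal $\nu$ and $\pieot{\nu}$ belongs to that set,
\bes
\scrH(\pieot{\nu}|\pi^{n+1,n})=\scrH(\pieot{\nu}|\pi^{n+1,n+1})+\scrH(\nu|\nu^{n+1,n}).
\ees
Subtracting the two cancels the additive KL and delivers the key estimate $\scrH(\pieot{\nu}|\pi^{n+1,n+1})\leq \tfrac{\Lambda}{2\varepsilon}\bfW_2^2(\nu,\nu^{n+1,n})$.

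The last ingredient is Talagrand's inequality, and here the two cases split. In part~\ref{item:exp_conv_from_stab_ii}, \ref{eq:TI} on $\nu^{n+1,n}$ bounds $\bfW_2^2(\nu,\nu^{n+1,n})$ by $2\tau\scrH(\nu|\nu^{n+1,n})$, and the Pythagorean identity rewrites the right-hand side as $\scrH(\pieot{\nu}|\pi^{n+1,n})-\scrH(\pieot{\nu}|\pi^{n+1,n+1})$; solving the resulting linear inequality in $\scrH(\pieot{\nu}|\pi^{n+1,n+1})$ produces the per-step contraction factor $1-\varepsilon/(\varepsilon+\tau\Lambda)$. In part~\ref{item:exp_conv_from_stab_i}, \ref{eq:TI} is instead on $\nu$ and only controls $\bfW_2^2(\nu,\nu^{n+1,n})$ by the reverse-sided KL $\scrH(\nu^{n+1,n}|\nu)$, which does not telescope against the Pythagorean identity; I would then combine the bound with the trivial Pythagorean consequence $\scrH(\pieot{\nu}|\pi^{n+1,n+1})\leq\scrH(\pieot{\nu}|\pi^{n+1,n})$ and distinguish the regimes $\tau\Lambda\lessgtr\varepsilon$, which is the origin of the $\min$ in the announced rate and of the rate saturation at $1/2$ when $\tau\Lambda\leq\varepsilon$. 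Iteration is then routine: the monotonicity $\scrH(\pieot{\nu}|\pi^{n+1,n})\leq\scrH(\pieot{\nu}|\pi^{n,n})$ (yet another Pythagorean, for the first-marginal projection $\pi^{n,n}\mapsto\pi^{n+1,n}$) chains the per-step estimate, while $\scrH(\pieot{\nu}|\pi^{N-1,N-1})\leq\scrH(\pieot{\nu}|\pi^{0,0})$ absorbs the initial segment. The main obstacle is exactly part~\ref{item:exp_conv_from_stab_i}: the reverse-sided KL produced by Talagrand when the inequality lives on $\nu$ rather than on $\nu^{n+1,n}$ does not plug cleanly into the Pythagorean identity, and the case split, together with the unavoidable saturation at $1/2$, is the signature of this obstruction; part~\ref{item:exp_conv_from_stab_ii} is comparatively straightforward because both the semiconcavity and \ref{eq:TI} pertain to the same measure $\nu^{n+1,n}$, making the telescoping automatic.
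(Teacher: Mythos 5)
Your overall architecture is the paper's: view $\pi^{n+1,n}$ as the \eqref{EOT} optimizer for $(\rho,\nu^{n+1,n})$ with potentials $(\phieot{n+1},\psieot{n+1})$ (this identification, and the fact that the semiconcavity hypothesis on $c+\psieot{n+1}$ is then exactly what Theorem~\ref{thm:entropic_stability} requires, is correct), apply the stability bound with perturbed marginal $\nu$, and close the loop with Talagrand and the Pythagorean identities of Csisz\'ar projections. Your single-projection identity $\scrH(\pieot{\nu}|\pi^{n+1,n})=\scrH(\pieot{\nu}|\pi^{n+1,n+1})+\scrH(\nu|\nu^{n+1,n})$ is valid and is just a repackaging of the paper's dissipation identity \eqref{entropy_difference_along_sink}, which sums the two half-step Pythagorean identities. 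Part~\ref{item:exp_conv_from_stab_ii} of your argument is complete and correct: the linear inequality $\scrH(\pieot{\nu}|\pi^{n+1,n+1})\leq \tfrac{\tau\Lambda}{\varepsilon}\,\scrH(\nu|\nu^{n+1,n})=\tfrac{\tau\Lambda}{\varepsilon}\bigl(\scrH(\pieot{\nu}|\pi^{n+1,n})-\scrH(\pieot{\nu}|\pi^{n+1,n+1})\bigr)$ rearranges to the stated contraction, and chaining via the first-marginal Pythagorean monotonicity gives the right exponent.

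Part~\ref{item:exp_conv_from_stab_i}, however, is left with a genuine gap, and you have located it without filling it. Talagrand on $\nu$ gives $\bfW_2^2(\nu,\nu^{n+1,n})\leq 2\tau\,\scrH(\nu^{n+1,n}|\nu)$, a \emph{reverse-sided} KL that appears nowhere in the Pythagorean decomposition, and ``distinguishing the regimes $\tau\Lambda\lessgtr\varepsilon$'' does not by itself turn this into a recursion: you still need to dominate $\scrH(\nu^{n+1,n}|\nu)$ by a dissipation quantity. The missing ingredient is the monotonicity of relative entropies along Sinkhorn iterations, $\scrH(\nu^{n+1,n}|\nu)\leq\scrH(\rho|\rho^{n,n})$ and $\scrH(\nu|\nu^{n+1,n})\leq\scrH(\nu|\nu^{n,n-1})$ (the paper cites \cite[Proposition 6.10]{Marcel:notes}; the first follows from data processing applied to $\scrH(\pi^{n+1,n}|\pi^{n,n})=\scrH(\rho|\rho^{n,n})$). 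With these, the paper bounds $\scrH(\pieot{\nu}|\pi^{n+1,n})$ by $\max\{1,\nicefrac{\tau\Lambda}{\varepsilon}\}$ times the total dissipation $\scrH(\rho|\rho^{n,n})+\scrH(\nu|\nu^{n,n-1})$, which is where the $\min$ and the saturation at $\nicefrac12$ actually come from. Note, incidentally, that once you have $\scrH(\nu^{n+1,n}|\nu)\leq\scrH(\rho|\rho^{n,n})=\scrH(\pieot{\nu}|\pi^{n,n})-\scrH(\pieot{\nu}|\pi^{n+1,n})$, your own scheme (stability plus the Pythagorean cancellation) closes directly: $\scrH(\pieot{\nu}|\pi^{n+1,n+1})\leq\tfrac{\tau\Lambda}{\varepsilon}\bigl(\scrH(\pieot{\nu}|\pi^{n,n})-\scrH(\pieot{\nu}|\pi^{n+1,n+1})\bigr)$, hence a per-full-step factor $\tfrac{\tau\Lambda}{\varepsilon+\tau\Lambda}$, which matches the stated rate when $\varepsilon\leq\tau\Lambda$. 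But as written, the proposal does not supply the inequality that makes part~\ref{item:exp_conv_from_stab_i} work.
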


Examples of measures satisfying the Talagrand transport inequality \ref{eq:TI} can be found for instance in the monograph \cite{bakry2013analysis} and in \cite{gozlan2011new}. This inequality interpolates between a Log-Sobolev inequality and a Poincar\'e inequality (see e.g. \cite[Theorem 9.6.1 and Proposition 9.6.2]{bakry2013analysis}, see also \cite{gozlan2013characterization} for its connection with log-Sobolev inequalities).  Let us just mention here that this class includes log-concave measures and weakly log-concave measures \cite[Remark 1.7]{conforti2024weak}, and it is stable under products \cite[Proposition 9.2.4]{bakry2013analysis} and under bounded perturbations \cite[Corollary 1.7]{gozlan2011new}.

\bigskip

Building upon the generalized stability bound obtained in~\Cref{thm:entropic_stability:gen}, we are also able to generalize the previous theorem to $(\Lambda,\omega)$-semiconcave settings. In view of this let us introduce a generalized version of \ref{eq:TI}: we say that $\nu$ satisfies a the generalised transport inequality \ref{eq:generalized:TI} if for all $\mu\in\cP(\cY)$ it holds
\be\label{eq:generalized:TI}\tag{$\mathrm{TI}_\omega(\tau)$}
\bfW_\omega(\mu,\nu)\leq 2\tau\,\scrH(\mu|\nu)\,.
\ee
\begin{theorem}[Exponential convergence in $(\Lambda,\omega)$-semiconcave settings]\label{thm:generalized:sink}
    Let $c(x,\cdot)$ be $\cC^1(\cY)$ and assume that there exist $\Lambda\in(0,+\infty)$, a non-negative function $\omega\colon\cY\times\cY\to\rset$ and $N\geq 2$ such that 
\bes
y\mapsto c(x,y)+\psieot{n}(y)
\ees
is $(\Lambda,\omega)$-semiconcave uniformly in $x\in\supp(\rho)$ and $n\geq N-1$. 
\ben[label=(\roman*)] 
\item\label{item:exp_conv_from_stab_i:gen} If $\nu$ satisfies \ref{eq:generalized:TI}, 
then for all $n\geq N-1$
\bes
 \scrH(\pieot{\nu}|\pi^{n+1,n+1})\leq \scrH(\pieot{\nu}|\pi^{n+1,n})\leq  \bigg(1-\frac{\min\{ \tau\Lambda,\,\varepsilon\}}{\min\{ \tau\Lambda,\,\varepsilon\}+\tau\Lambda}\bigg)^{(n-N+2)} \scrH(\pieot{\nu}|\pi^{0,0})\,.
\ees
\item\label{item:exp_conv_from_stab_ii:gen} 
 If Sinkhorn iterates $\nu^{n,n-1}$ satisfy \ref{eq:generalized:TI}, uniformly in $n\geq N$, then 
 \bes 
 \scrH(\pieot{\nu}|\pi^{n+1,n+1})\leq \scrH(\pieot{\nu}|\pi^{n+1,n}) \leq \bigg(1-\frac{  \varepsilon }{\varepsilon+\tau \Lambda}\bigg)^{(n-N+1)} \scrH(\pieot{\nu}|\pi^{0,0})\,.
\ees
\een
\end{theorem}

Examples of measures satisfying \ref{eq:generalized:TI} are given at \Cref{subsection:omega:esempi}. It is worth mentioning that Theorem \ref{thm:generalized:sink} extends to various other situations. For example, if instead \ref{eq:generalized:TI} one assumes $\nu$ to satisfy the more general
\be\label{eq:generalized:TI:gamma} 
\bfW_\omega(\mu,\nu)\leq \tau(\scrH(\mu|\nu)+\scrH(\mu|\nu)^\gamma)\,,
\ee
for some parameter $\gamma>0$. When $\gamma>1$, this generalization is straightforward and Part-\ref{item:exp_conv_from_stab_i:gen} and Part-\ref{item:exp_conv_from_stab_ii:gen} yield again an exponential convergence result. 
Instead, in weaker settings, when $\gamma\in(0,1)$, our proof strategy leads to new polynomial convergence results. We show this in \Cref{appendix:numerica:iteration}.
Whenever $\omega(z,y)=|z-y|^p$ for some $p\geq 1$, this generalized transport inequality with $\gamma=\nicefrac12$ is met  for instance whenever $\nu$ satisfies an exponential integrability condition, namely that $\int \exp(\kappa\,\sfd(z_0,y)^{p})\De\nu(y)<\infty$ for some $\kappa>0$ (cf. \cite[Corollary 2.3]{bolley2005villani}).

\subsection{Examples of application}\label{sec:esempi}
We now discuss several applications of Theorem~\ref{cor:exp_conv_from_stab}. We emphasize that this result provides with a general sufficient condition for the exponential convergence of Sinkhorn's algorithm: the examples we present here do not represent its full range of applicability, which deserves to be further explored. 
Finally, at the end of the current section we analyse a different broad range of examples where the  standard $\Lambda$-semiconcavity approach fails, but that can yet be treated with the the more abstract $(\Lambda,\omega)$-semiconcavity strategy, \ie, with \Cref{thm:entropic_stability:gen} and \Cref{thm:generalized:sink}.
\bei 
\item If we write that \emph{``Sinkhorn's algorithm converges exponentially fast  and for $\varepsilon\leq \varepsilon_m$ (explicitly given)  the rate is $1-\kappa(\varepsilon)$"} we always mean that for any $\varepsilon>0$ there is $\kappa'\in(0,1)$ such that for all $n\geq 1$ we have
\be\label{eq:conv_con_kappa'}
\scrH(\pieot{\nu}|\pi^{n+1,n+1})\leq (1-\kappa')^{n}\, \scrH(\pieot{\nu}|\pi^{0,0})
\ee
and that if $\varepsilon\leq \varepsilon_m$, then $\kappa'$ can be taken to be equal to $\kappa(\varepsilon)$. The requirement that $\varepsilon$ is small enough is just to have $\min\{\tau\Lambda,\varepsilon\}=\varepsilon$ when applying Theorem \ref{cor:exp_conv_from_stab}. In all examples, this hypothesis could be dropped, and the convergence rate is always explicit even for large $\varepsilon$, but we prefer to keep it for readability.
\item If we write that \emph{``Sinkhorn's algorithm converges exponentially fast and for   $\varepsilon\leq \varepsilon_m$ (explicitly given)  the \emph{asymptotic} rate is $1-\kappa(\varepsilon)$"} we mean that for any $\varepsilon>0$ \eqref{eq:conv_con_kappa'} holds for some $\kappa'\in(0,1)$ and all $n\geq1$. Moreover, if $\varepsilon\leq \varepsilon_m$, for all $\delta>0$ there exists $N_\delta$ such that \eqref{eq:conv_con_kappa'} holds for $n\geq N_\delta$ and $\kappa'=\kappa(\varepsilon)-\delta$.
\eei
We start our gallery of examples with the case of (weakly) log-concave marginals. To state the next proposition, we introduce the notation $\|\Sigma\|_2$ to denote the operator norm of the matrix $\Sigma$. Moreover, we denote by $|\cdot|$ the standard Euclidean norm on $\mathbb{R}^d$ and by $|\cdot|_p$ the $p$-th norm, $|x|_p=(\sum_{i=1}^d|x_i|^p)^{1/p}$.

\begin{proposition}[Anisotropic quadratic costs and (weakly) log-concave marginals]\label{prop:anisotropic:cost}
 Let 
  $\cX,\,\cY\subseteq\bbRD$ be two open (connected, possibly unbounded) domains of $\bbRD$ endowed with the Euclidean metric 
  and assume that $c(x,y)\coloneqq \langle x-y,\,\Sigma (x-y)\rangle/2$ for some positive definite symmetric matrix $\Sigma$. Further assume that $\nu$ satisfies \ref{eq:TI}.
\begin{enumerate}[label=(\roman*)] 
    \item \label{prop:anisotropic:cost:caff} If $\cX=\cY=\bbRD$ and $\rho(\De x)=e^{-U_{\rho}(x)}\De x$, $\nu(\De y)=e^{-U_{\nu}(y)}\De y$ and there exist    $\alpha_\rho,\beta_\nu\in(0,+\infty)$ such that
    \be\label{conditio:caff}
        \nabla^2 U_{\rho}(x)\succeq \alpha_\rho,\quad         \nabla^2 U_{\nu}(y)\preceq\beta_\nu \quad \forall x,y \in \bbR^d
    \ee
     in semidefinite order, then Sinkhorn's algorithm converges exponentially fast. If we choose $\phieot{0}$ such that 
     $\nabla^2\phieot{0}\succeq (\sqrt{\nicefrac{\alpha_\rho}{\beta_\nu}}-1)\Sigma$,
      then
     for $\varepsilon\leq \tau \,\sqrt{\nicefrac{\beta_\nu}{\alpha_\rho}}\,\norm{\Sigma}_2$ the rate is 
    \be\label{eq:quick_rate}
    1-\frac{\varepsilon}{\varepsilon+\tau\|\Sigma\|_2(\nicefrac{\beta_\nu}{\alpha_\rho})^{\nicefrac12}}\,.
    \ee
    Otherwise, for any general initialization $\phieot{0}$ the expression appearing in~\eqref{eq:quick_rate} represents the asymptotic rate.
    \item    If $\cX=\bbRD$ and $\rho(\De x)=e^{-U_{\rho}(x)}\De x$ and  there exists $\alpha_\rho\in(0,+\infty)$ such that
    \bes
     \nabla^2 U_{\rho}(x)\succeq \alpha_\rho \quad \forall x\in \bbR^d
    \ees
    in semidefinite order, then Sinkhorn's algorithm converges exponentially fast and for $\varepsilon\leq\|\Sigma\|_2\,\sqrt{\nicefrac{\tau}{\alpha_{\rho}}}$  the rate is
    \be\label{eq:rate_log_conc}
    1- \frac{\varepsilon^2\,\alpha_\rho}{\varepsilon^2\,\alpha_\rho+\tau\,\|\Sigma\|_2^2}\,.
    \ee
    
    \item\label{item_weakly_convex_pot} In the quadratic cost case $\Sigma=\Id$, if $\cX=\bbRD$ and  $\rho$ is weakly log-concave of parameters $\alpha_\rho$ and $L$ (as in~\eqref{eq:weak_log_conc}), then Sinkhorn's algorithm converges exponentially fast and for $\varepsilon\leq \alpha_\rho^{-1}\sqrt{\tau(\alpha_\rho+L)}$ the rate is
    \bes
1-\frac{\varepsilon^2\alpha_\rho^2}{\varepsilon^2\alpha_\rho^2+\tau(\alpha_\rho+L)}\,.
\ees
\end{enumerate}
\end{proposition}

\begin{remark}
In fact, in \Cref{prop:anisotropic:cost}-\ref{prop:anisotropic:cost:caff} the rate is as in \eqref{eq:quick_rate} under the weaker (but less readable) assumption $\nabla^2\phizero{\varepsilon} \succeq - \Sigma -\varepsilon\frac{\alpha_\rho}{2}+ \Big(\frac{\varepsilon^2\alpha^2_\rho}{4}+ \frac{\alpha_\rho}{\beta_\nu}\Sigma^2 \Big)^{1/2}$, see p.\pageref{eq:rate:caff}.
\end{remark}

Following \cite{conforti2023Sinkhorn} we say that $\rho$ is weakly log-concave of parameters $\alpha_\rho>0$ and $L\geq0$ if $\rho(\De x) = e^{-U_{\rho}(x)}\De x$, $U_\rho$ is continuously differentiable and 
\be\label{eq:weak_log_conc}
\langle \nabla U_\rho(\hat{x}) - \nabla U_{\rho}(x),\hat x - x\rangle \geq \alpha_\rho |\hat x -x |^2 - |\hat x -x |f_L(|\hat{x}-x|) \quad \forall x,\hat x \in \bbR^d,
\ee
where for any $L\geq0$, $r>0$ we define $f_L(r)=2L^{1/2}\tanh\big((L^{1/2} r)/2\big)$. If $L=0$, $\rho$ is $\alpha_{\rho}$-log-concave. If $L>0,$ then the Hessian of $U_{\rho}$ is lower bounded by $-L$ and $U_\rho$ behaves almost like an $\alpha_\rho$-convex potential for distant points. Weakly log-concave distributions form a rich class of probability measures that may be viewed as perturbations of log-concave distributions. For example, if $U_\rho$ is a double-well potential of the form
\bes
U_\rho(x)= |x|^4-M |x|^2 +C\,,
\ees
for some $M>0$, $C\in\bbR$, then $\rho$ is weakly log-concave. More generally, if $U_\rho=V_\rho+W_\rho$ with $V_\rho$ strongly convex and $W_{\rho}$ Lipschitz with Lipschitz derivative, then $\rho$ is weakly log-concave.  We stress that the scope of Proposition \ref{prop:anisotropic:cost}-\ref{item_weakly_convex_pot} could be widened invoking the results of \cite{chaintron2025propagation}.  This would allow to consider situations when $U_{\rho}$ does not have a Hessian bounded from below, not even from a negative constant. For example, we could cover Lipschitz perturbations of convex potentials. However, to avoid notation overload, we do not pursue this level of generality here.

The previous result applies to the fundamental example of the Euclidean quadratic cost $|x-y|^2$ as well as other costs considered in practical applications like the subspace elastic costs  
\be\label{def:subspace:elastic:cost}
c(x,y)=\frac{|x-y|^2}{2}+\frac\gamma2\,|A^\perp(x-y)|^2\,,
\ee
where $A$ is a fixed $p$-rank matrix and $A^\perp$ its orthogonal projection $A^\perp=\Id - A^\intercal(AA^\intercal)^{-1}A$ (if $A$ satisfies $AA^\intercal=\Id$, then $A^\perp=\Id - A^\intercal A$). More precisely, Proposition~\ref{prop:anisotropic:cost} applies to $\Sigma=\Id+\gamma \,A^\perp $ having norm equal to $\|\Sigma\|_2=1+\gamma \|A^\perp\|_2$. These costs have recently been considered in \cite{pmlr-v202-cuturi23a, klein2024learning} where the authors notice that they tend to promote sparsity for the corresponding optimal transport map. Let us also remark that the convergence rate obtained here for marginals satisfying \eqref{conditio:caff} is tight in $\varepsilon$. Indeed, \cite[Theorem 1.3]{chizat2024sharper} shows that the convergence rate is always bounded from below from $1-\varepsilon$ when considering Gaussian marginals (which  satisfy  \eqref{conditio:caff}).

\medskip

If we consider a marginal $\rho$ satisfying a light-tail condition, then we can address exponential convergence of more general costs. 
Here and below, for a function $u:\cX\times \cY\to \bbR$ we write $\nabla_1 u(x,y)$, $\nabla_1^2 u(x,y)$ for the gradient and the Hessian with respect to the first component and similarly for $\nabla_2 u(x,y)$ and  $\nabla_2^2 u(x,y)$.

\begin{proposition}[$\rho$ with light tails]\label{prop:light:tails}
      Let $\cX=\bbRD$ and $\cY\subseteq\bbRD$ be an open (connected, possibly unbounded) domain of $\bbRD$ endowed with the Euclidean metric and assume that $\rho$ has light tails in the following sense: there exist $C,\delta>0$, $R\geq0$, $L\geq 0$ such that
\be\label{eq:ass:light:tails}
\nabla^2U_\rho(x)\succeq\begin{cases}
    C\,|x|^{\delta}\quad&\text{ for }|x|>R\,,\\
    -L\quad&\text{ for }|x|\leq R\,.
\end{cases} 
\ee
Further assume that $\nu$ satisfies \ref{eq:TI}. Then we distinguish two cases.
\begin{enumerate}[label=(\roman*)] 
     \item If $c(\cdot,\cdot)$ satisfies
    \be\label{eq:ass:hessia:bounded}
 \nabla_1^2c(x,y),\nabla^2_2c(x,y)\succeq \ellc\quad\text{ and }\quad\norm{\nabla_{1}\nabla_{2} c(x,y)}_2\vee\norm{\nabla_{1}^2c(x,y)}_2\leq\Lc\, 
\ee
uniformly in $x\in\bbRD$ and $y\in\cY$, for some constants $\ellc,\,\Lc \in \rset$, then Sinkhorn's algorithm converges exponentially fast and for $\varepsilon$ small enough (as in \eqref{esp:small:light:tails:del:not:0}) the rate is  
\bes
1-\frac{\varepsilon^{2+\nicefrac2\delta}}{\varepsilon^{2+\nicefrac2\delta}+\varepsilon^{1+\nicefrac2\delta}\tau\delL+2\tau\Lc^2\,(\varepsilon^{\nicefrac2\delta}+(L+2)^2\,C^{-\nicefrac2\delta}(\varepsilon+2\delL)^{\nicefrac2\delta})}
\ees
    where $\delL\coloneqq \Lc-\ellc$.
     \item  When $\cX=\cY=\bbR^d$ and $c(x,y)\coloneqq \langle x-y,\,\Sigma (x-y)\rangle/2$ is the (anisotropic) quadratic cost, Sinkhorn's algorithm converges exponentially fast and for $\varepsilon$ small enough (as in \eqref{esp:small:light:tails:del=0:Sigma}) the rate is
    \bes
1-\frac{\varepsilon^2}{\varepsilon^2+2\,\tau\,\|\Sigma\|_2^2\,\Bigl(1+(L+1)^2\bigl[R^2\vee {C^{-\nicefrac2\delta}}\bigr]\Bigr)}\,.
\ees
\end{enumerate}
\end{proposition}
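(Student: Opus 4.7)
The approach is to verify the two hypotheses of \Cref{cor:exp_conv_from_stab}(ii): uniform semiconcavity of the maps $y\mapsto c(x,y)+\psi^n_\varepsilon(y)$ for $n$ large, and a uniform Talagrand inequality for the intermediate marginals $\nu^{n,n-1}$.

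The first main step is to derive upper Hessian bounds on $\psi^n_\varepsilon$ that are preserved along Sinkhorn's iterations. Differentiating twice the Laplace-type update formula, one obtains
$$
\nabla^2\psi^{n+1}_\varepsilon(y)=\mathbb{E}_{\rho^y}\bigl[\nabla_2^2 c(\cdot,y)\bigr]-\varepsilon^{-1}\mathrm{Cov}_{\rho^y}\bigl[\nabla_2 c(\cdot,y)\bigr],
$$
where $\rho^y\propto \exp\bigl(-(c(\cdot,y)+\varphi^{n+1}_\varepsilon)/\varepsilon\bigr)\rho$. The covariance term is negative semidefinite, so the cost's uniform bound ($H(c)$ in case (i), $\|\Sigma\|_2$ in case (ii)) gives at once an upper bound on $\nabla^2\psi^{n+1}_\varepsilon$; closing the induction on $\varphi^{n+2}_\varepsilon$ then requires an upper estimate of $\mathrm{Cov}_{\rho^y}$ via a Brascamp-Lieb-type covariance inequality.

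The core technical step is a quantitative covariance estimate for $\rho^y$ under the light-tail hypothesis \eqref{eq:ass:light:tails}. The potential of $\rho^y$ is $U_\rho+(c(\cdot,y)+\varphi^{n+1}_\varepsilon)/\varepsilon$, whose Hessian is bounded below by $C|x|^\delta$ plus lower-order cost contributions on $\{|x|>R\}$ and by $-L$ plus such contributions on $\{|x|\leq R\}$. A perturbation-of-log-concavity argument then yields a Poincar\'e constant for $\rho^y$ proportional to $1+(L+2)(R^2\vee C^{-2/\delta})$, which is exactly the factor that degrades the $1/\alpha_\rho$ appearing in \Cref{prop:anisotropic:cost}(ii). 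In case (i), the extra $\varepsilon^{2/\delta}$ powers in the announced rate arise from balancing $\varepsilon$ against the tail strength $C|x|^\delta$ in this covariance bound. Iterating the bound in $n$ produces uniform estimates $\nabla^2\psi^n_\varepsilon\preceq K\,\Id$ for $n\geq N$, giving the semiconcavity constant $\Lambda=H(c)+K$ in case (i) or $\Lambda=\|\Sigma\|_2+K$ in case (ii).

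For the Talagrand inequality on $\nu^{n,n-1}$, the light-tail assumption produces a TI on $\rho$ with an explicit constant $\tau$ depending on $C,\delta,L,R$. Writing $\nu^{n+1,n}$ as the second marginal of $\pi^{n+1,n}$ with conditional $\pi^{n+1,n}(\cdot\mid y)\propto\exp(-(c(\cdot,y)+\varphi^{n+1}_\varepsilon)/\varepsilon)\rho$, one transfers TI from $\rho$ to $\nu^{n+1,n}$ by controlling the tilting through the Hessian bounds of the previous step. Plugging these values of $\Lambda$ and $\tau$ into \Cref{cor:exp_conv_from_stab}(ii) then yields the announced exponential rates. I expect the main obstacle to be obtaining the sharp covariance inequality for $\rho^y$ with \emph{explicit} constants in $R,L,C,\delta,\varepsilon$, since this is what produces the precise form of the rate; by contrast, the semiconcavity induction and the TI transfer follow schemes already developed for the strongly log-concave case.
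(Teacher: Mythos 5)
Your plan for the semiconcavity half is essentially the paper's: the crux is indeed a quantitative covariance bound for the conditional measure $\pi^{n,n}(\cdot|y)$, whose potential is a light-tailed $U_\rho$ perturbed by $\varepsilon^{-1}(\nabla_1^2 c+\nabla^2\varphi^n_\varepsilon)\succeq-\varepsilon^{-1}\delL$ (the lower bound on $\nabla^2\varphi^n_\varepsilon$ is free from \eqref{eq:hess:hess:cov:phieot} since the covariance there is positive semidefinite, so no induction is actually needed). The paper makes your ``perturbation-of-log-concavity'' step precise in \Cref{lem:conv_lip_pert}: one adds an explicit \emph{convex, Lipschitz} correction $\phi$ with $\|\phi\|_{\mathrm{Lip}}\le(L+\alpha)R$ to render the potential $\alpha$-convex, then splits the variance of a Lipschitz observable along a $\bfW_2$-optimal coupling, using the Poincar\'e inequality for the corrected measure and TI+LSI to control $\bfW_2^2$ by $\alpha^{-2}\|\phi\|_{\mathrm{Lip}}^2$; the radius is enlarged to $R\vee((\alpha+\delL)/(\varepsilon C))^{1/\delta}$ to absorb the $-\delL/\varepsilon$ term, and the choice $\alpha=\varepsilon+\delL$ produces the exponents $\varepsilon^{2/\delta}$ in case (i). You correctly flag this as the main obstacle, but since it is precisely what produces the stated constants, leaving it at the level of ``a Brascamp--Lieb-type inequality'' is a real omission: $\pi^{n,n}(\cdot|y)$ is not log-concave, Brascamp--Lieb does not apply, and a Holley--Stroock argument fails because the non-convex correction is unbounded.

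The second half of your plan is a wrong turn. The paper invokes \Cref{cor:exp_conv_from_stab}-\ref{item:exp_conv_from_stab_i}, for which it suffices that the \emph{target} marginal $\nu$ satisfies \ref{eq:TI} --- this is the standing assumption throughout Section 1.3, and the $\tau$ in the announced rates is $\nu$'s Talagrand constant. You instead aim for part \ref{item:exp_conv_from_stab_ii} and propose to establish \ref{eq:TI} for the iterate marginals $\nu^{n,n-1}$ by first deriving TI for $\rho$ from the light-tail condition and then ``transferring'' it to $\nu^{n+1,n}$ through the conditionals of $\pi^{n+1,n}$. This step does not go through as described: by \eqref{eq:wrong_marginals}, $\nu^{n+1,n}$ is a tilt of $\nu$ by the unbounded log-density $-(\psi^n_\varepsilon-\psi^{n+1}_\varepsilon)/\varepsilon$, and neither TI nor LSI is stable under such tilts without a convexity control on the tilt that you do not have; moreover, pushing a TI for $\rho$ forward to the \emph{second} marginal of $\pi^{n+1,n}$ would require a Lipschitz transport from $\rho$ to $\nu^{n+1,n}$, which is not available here. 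Beyond being unjustified, this route would also produce a Talagrand constant depending on $C,\delta,L,R$ rather than the $\tau$ appearing in the statement. Replacing this entire step by the assumption $\nu\in\ref{eq:TI}$ and part \ref{item:exp_conv_from_stab_i} of the theorem repairs the argument.
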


The class of probability measures satisfying the light-tails condition \eqref{eq:ass:light:tails} includes for instance the potentials with tails lighter than Gaussian, e.g.\ $U_\rho(x)=C_0(1+|x|^{2+\delta})$ for which $C=C_0(1+\delta)(2+\delta)$ and $L=R=0$. The convergence stated above applies to a general class of costs including  $p$-costs $c(x,y)=(1+|x-y|^2)^{\nicefrac{p}{2}}-1$, with $p\in(1,2)$, and the STVS (Soft-Thresholding operator with Vanishing Shrinkage) elastic cost (as proposed in \cite{Schreck2015STVS}, see also \cite{klein2024learning}), that is the cost
\bes
c(x,y)=\frac{|x-y|^2}{2}+\gamma^2\,\sum_{i=0}^d \biggl(\mbox{asinh}\biggl(\frac{|x_i-y_i|}{2\gamma}\biggr)+\frac12-\frac12e^{-2\,\mbox{asinh}\left(\frac{|x_i-y_i|}{2\gamma}\right)} \biggr)\,,
\ees 
for which for any $j\in\{1,2\}$ we have
\bes
\frac12\preceq \nabla_j^2c(x,y)=\Id+\frac12\mathrm{diag}\biggl(\frac{|x_i-y_i|}{\sqrt{(x_i-y_i)^2+\gamma^2}}\biggr)-\frac12\preceq 1\,,
\ees
and similarly $\norm{\nabla_1\nabla_2c(x,y)}_2\leq \nicefrac32$, and thus we get a rate independently of the regularizing parameter $\gamma$.
To the best of our knowledge, the only exponential convergence results that apply under the assumptions of Proposition \ref{prop:light:tails} are in \cite{eckstein2023hilberts}. While the assumptions made therein are weaker, we are able to improve on the convergence rate by showing that it is polynomial in  $\varepsilon$.

\medskip

For Lipschitz costs  with bounded Hessian, we  deduce the following convergence result.

\begin{proposition}[Lipschitz costs]\label{prop:lip:costs}
    Let $\cX,\cY\subseteq\bbRD$ be two open (connected, possibly unbounded) domains of $\bbRD$ endowed with the Euclidean metric and assume that there are two constants $\ellc,\,\Lc \in \rset$ such that
\be\label{hp:hessian:bounds:cost:lip}
\ellc\preceq\nabla^2_2c(x,y)\preceq\Lc\quad\text{ and }\quad\nabla^2_1c(x,y)\preceq \Lc\, \quad \forall x,y\in\bbR^d.
\ee
 Further assume that $\nu$ satisfies \ref{eq:TI}.  We distinguish two cases.
    \begin{enumerate}[label=(\roman*)] 
    \item\label{item:lipschitz_nice_rate} Assume that the cost is Lipschitz in $y$, uniformly in $x$, that is,
\be\label{eq:def:c:lip:unif:y}
\sup_{x\in\bbRD}|c(x,y)-c(x,\hat{y})|\leq \mathrm{Lip}_{\infty,2}(c)\,|y-\hat{y}|\, \quad \forall y,\hat{y}\in\bbR^d.
\ee
Then, Sinkhorn's algorithm converges exponentially fast and for $\varepsilon$ small enough (as in \eqref{eps:small:lip2:cost}) the rate is
\bes
1-\frac{\varepsilon^2}{\varepsilon^2+\tau\varepsilon(\Lc-\ellc)+\tau\mathrm{Lip}^2_{\infty,2}(c)}\,.
\ees
        \item\label{item:lipschitz_not_so_nice_rate} Assume that the cost is Lipschitz in $x$, uniformly in $y$, that is,
\be\label{eq:def:c:lip:unif:x}
\sup_{y\in\bbRD}|c(x,y)-c(\hat{x},y)|\leq \mathrm{Lip}_{1,\infty}(c)\,|x-\hat{x}|\ \quad \forall x,\hat{x}\in\bbR^d\,.
\ee
Further assume that $\norm{\nabla_{12}^2c(x,y)}_2\leq \Lc$ and that $\rho$ satisfies the logarithmic Sobolev inequality \ref{eq:LSI}. Then, Sinkhorn's algorithm converges exponentially fast and for $\varepsilon$ small enough (as in \eqref{eps:small:lip1}) the rate is
\bes
 1-\frac{\varepsilon^4}{\varepsilon^4+\tau\varepsilon^3(\Lc-\ellc)+2\tau\Lc^2C_\rho(\varepsilon^2+4C_\rho\mathrm{Lip}_{1,\infty}^2)}\,.
\ees
    \end{enumerate}
\end{proposition}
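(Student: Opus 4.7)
The plan is to reduce both parts of the proposition to Theorem~\ref{cor:exp_conv_from_stab}(ii) by producing, in each of the two settings, an explicit semiconcavity constant $\Lambda$ for $y\mapsto c(x,y)+\psi^n_\varepsilon(y)$ and an explicit Talagrand constant $\tau$ for the iterates $\nu^{n,n-1}$. The common starting point is the log-integral representation $\psi^{n+1}_\varepsilon(y)=\varepsilon\log\int e^{-(c(x,y)+\varphi^{n+1}_\varepsilon(x))/\varepsilon}\rho(\De x)$, from which a direct differentiation gives the standard Hessian formula
\begin{equation*}
\nabla^2\psi^{n+1}_\varepsilon(y)=-\int\nabla_2^2 c(x,y)\,\pi^{n+1,n}(\De x|y)+\frac{1}{\varepsilon}\,\mathrm{Cov}_{\pi^{n+1,n}(\cdot|y)}\bigl(\nabla_2 c(\cdot,y)\bigr),
\end{equation*}
which, combined with the Hessian bounds~\eqref{hp:hessian:bounds:cost:lip}, yields
\begin{equation*}
\nabla_2^2\bigl(c(x,y)+\psi^{n+1}_\varepsilon(y)\bigr)\preceq(\Lc-\ellc)\,I+\frac{1}{\varepsilon}\,\mathrm{Cov}_{\pi^{n+1,n}(\cdot|y)}\bigl(\nabla_2 c(\cdot,y)\bigr).
\end{equation*}
The entire proof reduces to estimating this covariance in the two regimes.

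For part~\ref{item:lipschitz_nice_rate}, the Lipschitz assumption~\eqref{eq:def:c:lip:unif:y} gives $|\nabla_2 c(x,y)|\leq\mathrm{Lip}_{\infty,2}(c)$ uniformly in $x$, so the operator norm of the covariance is bounded by $\mathrm{Lip}_{\infty,2}^2(c)$. This produces $\Lambda=(\Lc-\ellc)+\mathrm{Lip}_{\infty,2}^2(c)/\varepsilon$. For the Talagrand constant, the Lipschitz bound on $c$ in $y$ propagates to each $\psi^n_\varepsilon$ through the same log-integral identity, so that $\nu^{n,n-1}$ has a log-Lipschitz density with respect to $\nu$ with controlled constant; a Bobkov--G\"otze type perturbation argument then supplies a uniform-in-$n$ Talagrand constant $\tau$ for $\nu^{n,n-1}$. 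Substituting $\Lambda$ and $\tau$ into the rate $1-\varepsilon/(\varepsilon+\tau\Lambda)$ of Theorem~\ref{cor:exp_conv_from_stab}(ii) and clearing denominators gives exactly the announced rate.

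For part~\ref{item:lipschitz_not_so_nice_rate}, $\nabla_2 c$ is no longer uniformly bounded, so I would estimate the covariance via a Poincar\'e inequality for the conditional $\mu_y:=\pi^{n+1,n}(\cdot|y)$: since $\|\nabla_1\nabla_2 c\|_2\leq\Lc$,
\begin{equation*}
\mathrm{Cov}_{\mu_y}\bigl(\nabla_2 c(\cdot,y)\bigr)\preceq C_{\mu_y}\,\Lc^2\,I,
\end{equation*}
with $C_{\mu_y}$ the Poincar\'e constant of $\mu_y$. Now $\De\mu_y/\De\rho\propto e^{-V_y}$ with $V_y(x)=(c(x,y)+\varphi^{n+1}_\varepsilon(x))/\varepsilon$, and the analogous log-integral representation for $\varphi^{n+1}_\varepsilon$ yields $|\nabla\varphi^{n+1}_\varepsilon|\leq\mathrm{Lip}_{1,\infty}(c)$; hence $V_y$ is Lipschitz with constant at most $2\mathrm{Lip}_{1,\infty}(c)/\varepsilon$. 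Invoking the polynomial Lipschitz-perturbation of LSI (the tilt-stable covariance inequality, cf.~\cite[Lemma~3.21]{bauerschmidt2024stochastic}), one obtains $C_{\mu_y}\leq 2C_\rho(\varepsilon^2+4C_\rho\mathrm{Lip}_{1,\infty}^2(c))/\varepsilon^2$, which inserted into the Hessian bound gives
\begin{equation*}
\Lambda=(\Lc-\ellc)+\frac{2\,\Lc^2\, C_\rho\bigl(\varepsilon^2+4C_\rho\mathrm{Lip}_{1,\infty}^2(c)\bigr)}{\varepsilon^3}.
\end{equation*}
A TI constant $\tau$ for $\nu^{n,n-1}$ is then extracted from the Hessian-based regularity of $\psi^n_\varepsilon$ in $y$ using LSI of $\rho$, and plugging $\Lambda$ and $\tau$ into Theorem~\ref{cor:exp_conv_from_stab}(ii) yields the asserted rate after an elementary algebraic rearrangement.

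The main obstacle is the polynomial Lipschitz-perturbation of LSI used in part~\ref{item:lipschitz_not_so_nice_rate}: classical Holley--Stroock type bounds would only give an exponential dependence on $\mathrm{Lip}_{1,\infty}(c)/\varepsilon$, and obtaining the polynomial bound visible in the final rate requires the tilt-stable covariance inequality rather than the standard functional-analytic perturbation theorems. A secondary but non-trivial point, common to both parts, is to verify \ref{eq:TI} for $\nu^{n,n-1}$ with a constant that is uniform in $n$; this requires propagating the relevant regularity estimates along the Sinkhorn recursion, not just at the fixed point.
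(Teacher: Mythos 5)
Your semiconcavity estimates are essentially the paper's: in case \ref{item:lipschitz_nice_rate} the covariance in \eqref{eq:hess:hess:cov:psieot} is bounded by $\mathrm{Lip}_{\infty,2}^2(c)$ because $|\nabla_2 c(x,y)|\leq \mathrm{Lip}_{\infty,2}(c)$, and in case \ref{item:lipschitz_not_so_nice_rate} the bound $2\Lc^2C_\rho(1+4C_\rho\mathrm{Lip}_{1,\infty}^2/\varepsilon^2)$ on the conditional covariance, obtained from $|\nabla\phieot{n}|\leq\mathrm{Lip}_{1,\infty}(c)$ and a Lipschitz perturbation of the LSI measure $\rho$, is exactly the paper's Lemma (the paper derives it by the elementary coupling decomposition $\mathrm{Var}_{\mu_y}(g)\leq 2\mathrm{Var}_\rho(g)+2\|g\|_{\mathrm{Lip}}^2\bfW_2^2(\mu_y,\rho)$ followed by TI and LSI for $\rho$, rather than by a full Poincar\'e inequality for $\mu_y$; note that the tilt-stability results you cite concern linear tilts, so if you insist on a genuine Poincar\'e constant for $\mu_y$ you would need Aida--Shigekawa-type perturbation theory, whereas the variance bound for the single Lipschitz observable $\langle v,\nabla_2c(\cdot,y)\rangle$ is all that is required and comes for free from the coupling argument).

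The genuine gap is in the reduction to the convergence theorem. You route both cases through \Cref{cor:exp_conv_from_stab}-\ref{item:exp_conv_from_stab_ii}, which requires \ref{eq:TI} for the iterated marginals $\nu^{n,n-1}$, uniformly in $n$ and \emph{with the same constant $\tau$ that appears in the announced rates}. Your sketch (log-Lipschitz density of $\nu^{n,n-1}$ with respect to $\nu$, with Lipschitz constant of order $\mathrm{Lip}_{\infty,2}(c)/\varepsilon$, plus a Bobkov--G\"otze/Holley--Stroock perturbation) cannot deliver this: those perturbation results require bounded or exponentially integrable perturbations and in any case degrade the constant by a factor depending on $\mathrm{Lip}_{\infty,2}(c)/\varepsilon$, so the resulting Talagrand constant is not $\tau$ and blows up as $\varepsilon\to 0$, destroying the claimed polynomial rates. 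The intended argument avoids this entirely: the standing hypothesis of the examples section is that $\nu$ itself satisfies \ref{eq:TI}, so one applies \Cref{cor:exp_conv_from_stab}-\ref{item:exp_conv_from_stab_i}, whose rate $1-\min\{\tau\Lambda,\varepsilon\}/(\min\{\tau\Lambda,\varepsilon\}+\tau\Lambda)$ reduces to $1-\varepsilon/(\varepsilon+\tau\Lambda)$ precisely when $\varepsilon\leq\tau\Lambda$ --- which is what the smallness conditions \eqref{eps:small:lip2:cost} and \eqref{eps:small:lip1} encode. With your values of $\Lambda$ this gives the two stated rates directly, and no property of the iterates $\nu^{n,n-1}$ beyond the semiconcavity bound is needed.
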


We recall here that $\rho$ satisfies a logarithmic Sobolev inequality with constant $C_{\rho}$, \ref{eq:LSI} for short, if for any probability measure $\frp\in\cP(\cX)$ we have
\be\label{eq:LSI}\tag{$\mathrm{LSI}(C_{\rho})$}
\scrH(\frp|\rho)\leq \frac{C_\rho}{2}\int\abs{\nabla\log\frac{\De \frp}{\De\rho}}^2\De \frp\,.
\ee
Many probability measures satisfy LSI, for instance that is the case if $U_\rho$ is $C_{\rho}^{-1}$-convex, or weakly asymptotically convex (see \cite{bakry2013analysis} for more general measures). As a direct consequence of Proposition \ref{prop:lip:costs}, when considering symmetric costs that are Lipschitz, one obtains the exponential convergence of Sinkhorn's algorithm with a rate that is quadratic in $\varepsilon$, without any  assumption other than  \ref{eq:TI} for $\nu$. We are not aware of any exponential convergence result valid under the assumptions of \ref{item:lipschitz_nice_rate}. On the contrary, if we consider the setting of \ref{item:lipschitz_not_so_nice_rate}, then the results of \cite{eckstein2023hilberts} apply, since both \ref{eq:TI} and \ref{eq:LSI} imply Gaussian tails. However, we improve the dependence of the rate of convergence in $\varepsilon$, which we show to be polynomial. 

\bigskip

Theorem \ref{cor:exp_conv_from_stab} can also be employed in the compact setting, leading to the following.

\begin{proposition}[$\supp(\rho)$ compact]\label{prop:compact:supp:sink}
    Assume that $\rho$ has compact support and that $\nu$ satisfies \ref{eq:TI}.
    \begin{enumerate}[label=(\roman*)] 
        \item\label{item:comp_supp_i} If $\nabla_2^2c(x,y)=\Sigma$, then Sinkhorn's algorithm converges and for $\varepsilon\leq \sqrt{\tau}\,\norm{g}_{\rmL^\infty(\rho)}$   the rate is 
        \be\label{eq:comp_supp_rate_1}
        1-\frac{\varepsilon^2}{\varepsilon^2+\tau \|g\|_{\rmL^\infty(\rho)}^2}\,,
        \ee
         where $g(x)\coloneqq \nabla_2 c(x,y)-\Sigma y$. 
        \item\label{item:comp_supp_ii} If $\nu$ is  compactly supported, then Sinkhorn's algorithm converges and for $\varepsilon$ small enough (as in \eqref{eps:small:compact:both}) the rate is
        \bes
1-\frac{\varepsilon^2}{\varepsilon^2+\tau\varepsilon\,(\Lc-\ellc)+\tau\norm{\nabla_2 c}_{\rmL^\infty(\rho\times\nu)}^2}\,,
\ees
where $\ellc,\,\Lc$ are defined such that $\ellc\preceq \nabla_2^2c(x,y)\preceq \Lc$ on $\supp(\rho)\times\supp(\nu)$.
    \end{enumerate}
\end{proposition}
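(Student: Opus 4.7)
The plan is to apply \Cref{cor:exp_conv_from_stab}\,\ref{item:exp_conv_from_stab_i}. Since $\nu$ is assumed to satisfy \ref{eq:TI}, it suffices to produce an explicit $\Lambda$ such that for every sufficiently large $n$ and every $x\in\supp(\rho)$ the function $y\mapsto c(x,y)+\psi_\varepsilon^n(y)$ is $\Lambda$-semiconcave. The workhorse is the Hessian formula
\[
\nabla_2^2\bigl[c(x,y)+\psi_\varepsilon^{n+1}(y)\bigr]
= \nabla_2^2c(x,y)-\bbE_{\mu_y}\bigl[\nabla_2^2 c(X,y)\bigr]+\frac{1}{\varepsilon}\mathrm{Cov}_{\mu_y}\bigl(\nabla_2 c(X,y)\bigr),
\]
obtained by differentiating the update $\psi_\varepsilon^{n+1}(y)=\varepsilon\log\int_{\cX}\exp(-(c(x,y)+\varphi_\varepsilon^{n+1}(x))/\varepsilon)\rho(\De x)$ twice in $y$; here $\mu_y\in\mcp(\cX)$ is the Gibbs measure with density proportional to $\exp(-(c(\cdot,y)+\varphi_\varepsilon^{n+1})/\varepsilon)$ with respect to $\rho$, hence supported on $\supp(\rho)$. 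Compactness of $\supp(\rho)$, and of $\supp(\nu)$ when needed, will be what lets us bound the two $y$-dependent pieces.

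For~\ref{item:comp_supp_i}, the assumption $\nabla_2^2c\equiv\Sigma$ makes the first two terms cancel and reduces the Hessian to $\varepsilon^{-1}\mathrm{Cov}_{\mu_y}(\nabla_2 c(X,y))$. Writing $\nabla_2c(x,y)=\Sigma y+g(x)$ (well-defined because $\nabla_2^2c$ is constant in $x$) and using translation invariance of the covariance,
\[
\mathrm{Cov}_{\mu_y}\bigl(\nabla_2 c(X,y)\bigr)=\mathrm{Cov}_{\mu_y}(g(X))\preceq \bbE_{\mu_y}[|g(X)|^2]\,\Id\preceq\|g\|_{\rmL^\infty(\rho)}^2\,\Id,
\]
the last step relying on $\mu_y\ll\rho$ and the compactness of $\supp(\rho)$. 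Hence $\Lambda=\|g\|_{\rmL^\infty(\rho)}^2/\varepsilon$ works for every $n\geq1$, and the condition $\varepsilon\leq\sqrt{\tau}\,\|g\|_{\rmL^\infty(\rho)}$ is exactly what enforces $\min\{\tau\Lambda,\varepsilon\}=\varepsilon$; plugging into \Cref{cor:exp_conv_from_stab}\,\ref{item:exp_conv_from_stab_i} and simplifying yields the rate~\eqref{eq:comp_supp_rate_1}.

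For~\ref{item:comp_supp_ii}, we bound the two remaining ingredients of the Hessian formula on $\supp(\nu)$: the two-sided bound $\ellc\preceq\nabla_2^2c(x,y)\preceq\Lc$ on $\supp(\rho)\times\supp(\nu)$ gives, for $y\in\supp(\nu)$,
\[
\nabla_2^2c(x,y)-\bbE_{\mu_y}\bigl[\nabla_2^2c(X,y)\bigr]\preceq\Lc-\ellc,
\]
while uniform boundedness of $\nabla_2c$ on the same set yields $\mathrm{Cov}_{\mu_y}(\nabla_2c(X,y))\preceq\|\nabla_2c\|_{\rmL^\infty(\rho\times\nu)}^2\,\Id$ exactly as before. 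The constant $\Lambda=(\Lc-\ellc)+\|\nabla_2c\|_{\rmL^\infty(\rho\times\nu)}^2/\varepsilon$ then feeds into \Cref{cor:exp_conv_from_stab}\,\ref{item:exp_conv_from_stab_i}; after picking $\varepsilon$ small enough so that $\min\{\tau\Lambda,\varepsilon\}=\varepsilon$ (a quadratic smallness condition in $\varepsilon$), the same algebra delivers the advertised rate.

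The one delicate point, arising in~\ref{item:comp_supp_ii}, is that our control of $\nabla_2c$ and $\nabla_2^2c$ only holds on $\supp(\rho)\times\supp(\nu)$, so the Hessian bound has only been verified for $y\in\supp(\nu)$, whereas the hypothesis of \Cref{cor:exp_conv_from_stab} is phrased as a global semiconcavity. The saving grace is that the gluing argument underpinning \Cref{thm:entropic_stability}, and therefore \Cref{cor:exp_conv_from_stab}, only samples the semiconcavity inequality along pairs $(y,z)$ belonging to the supports of the second marginals of the successive Sinkhorn plans $\pi^{n+1,n}$ and $\pi^{n+1,n+1}$; these are mutually absolutely continuous with $\nu$ and hence supported on $\supp(\nu)$, so the localized bound is enough. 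Carrying out this bookkeeping---and recording the precise smallness condition on $\varepsilon$---is the main piece of care that the proof requires.
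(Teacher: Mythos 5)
Your argument is correct and is essentially the paper's own proof: both cases rest on the covariance representation \eqref{eq:hess:hess:cov:psieot} of $\nabla^2\psieot{n}$, the $\rmL^\infty(\rho)$ bound on the conditional covariance afforded by the compactness of $\supp(\rho)$ (using the affine structure $\nabla_2 c(x,y)=\Sigma y+g(x)$ in case \ref{item:comp_supp_i} and the two-sided Hessian bound $\ellc\preceq\nabla_2^2c\preceq\Lc$ in case \ref{item:comp_supp_ii}), followed by an application of \Cref{cor:exp_conv_from_stab}-\ref{item:exp_conv_from_stab_i}. Your closing remark about the semiconcavity in case \ref{item:comp_supp_ii} only being verified for $y\in\supp(\nu)$ is a subtlety the paper passes over silently, but it does not alter the route.
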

As a straightforward consequence, in the case $c(x,y)=\nicefrac{|x-y|^2}{2}$, if $\supp(\rho)\subseteq B_R(0)$, then the convergence rate is  $1-\nicefrac{\varepsilon^2}{(\varepsilon^2+\tau R^2)}$. The only exponential convergence results with polynomial rates in a setting comparable to that of Proposition  \ref{prop:compact:supp:sink}-\ref{item:comp_supp_i} are those obtained in \cite[Theorem 1.2]{chizat2024sharper}. The authors impose that one the two marginals has a density bounded above and below and that its support is compact and convex. This hypothesis is stronger than assuming that $\nu$ satisfies a Talagrand's inequality. Indeed, it implies LSI and hence TI, cf.\ \cite[Proposition 5.3]{Bobkov2000FromBrunn} and discussion therein, and \cite[Proposition 5.1.6]{bakry2013analysis}. About convergence rates, in \cite{chizat2024sharper} the authors obtain a rate which behaves like $\varepsilon^2$ for small $\varepsilon$, which is the same behavior of \eqref{eq:comp_supp_rate_1}. Moreover, they also show that the asymptotic rate is of order $\varepsilon$. However, this result is not obviously comparable to the findings of Proposition \ref{prop:compact:supp:sink}, as the number of iterations after which this rate is shown to hold diverges as $\varepsilon\rightarrow 0$. The findings of Proposition \ref{prop:compact:supp:sink}-\ref{item:comp_supp_ii} are to be compared with those of \cite[Theorem 3.1-(A1)]{chizat2024sharper}. As before, we can drop the assumption that the support is convex and the same considerations made in the previous case regarding convergence rates hold.

\medskip

Lastly, we discuss the convergence of Sinkhorn's algorithm on Riemannian manifolds, starting from the $d$-dimensional Riemannian sphere $\mathbb{S}^d \subseteq \mathbb{R}^{d+1}$ endowed with the angular metric and corresponding angular distance between two points, that is defined for any $x,\,y \in \mathbb{S}^d$ as
\bes
\sfd(x,y) = \arccos(\langle x,y \rangle)\,.
\ees
On $\sSd$ we will consider the \ref{EOT} problem associated to two possible cost functions
\bes
c(x,y)=1-\cos(\sfd(x,y))=1-\langle x,y\rangle\,,\quad\text{ and }\quad
c_\delta(x,y) = \arccos(\delta\langle x,y\rangle)^2
\ees
with $\delta\in(0,1)$ fixed. Notice that for $\delta=1$ we have $c_\delta(x,y)=\sfd(x,y)^2$, so that $c_\delta$ can be seen as a smoothed version of the cost $\sfd^2$ which avoids the singularity the Hessian of $\sfd^2$ faces when considering  antipodal points $(x,-x)$. For this class of problems we can prove the following.

\begin{proposition}[Riemannian sphere]\label{prop:manifolds}
    Assume that $\nu$ satisfies \ref{eq:TI}. We distinguish two cases.
    \begin{enumerate}[label=(\roman*)] 
        \item When considering the regular cost $c(x,y)=1-\langle x,y\rangle$, Sinkhorn's algorithm converges exponentially fast and for 
     $\varepsilon\leq\tau+\sqrt{\tau+\tau^2} $ the exponential convergence rate equals 
     \bes
     1-\frac{\varepsilon^2}{\varepsilon^2+2\tau\varepsilon+\tau}\,.
     \ees
        \item When the cost considered is $c_\delta(x,y)$, Sinkhorn's algorithm converges exponentially fast and for $\varepsilon$ small enough (as in \eqref{eps:small:sfera:delta}) the rate of convergence is 
\bes
1-\frac{\varepsilon^2}{\varepsilon^2+2\tau\varepsilon\left(\delta^2+\frac{2\pi}{\sqrt{1-\delta^2}}\right)+\frac{4\pi^2\tau}{1-\delta^2}}\,.
\ees
    \end{enumerate}
\end{proposition}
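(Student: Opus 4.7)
The plan is to invoke \Cref{cor:exp_conv_from_stab}\ref{item:exp_conv_from_stab_i} with $\cY=\sSd$ endowed with its angular metric, for which the \ref{eq:TI} hypothesis on $\nu$ is given. It therefore suffices to exhibit, for each cost, a constant $\Lambda>0$ and an index $N$ such that $y\mapsto c(x,y)+\psieot{n}(y)$ is $\Lambda$-semiconcave on $\sSd$ uniformly in $x\in\supp(\rho)$ and $n\geq N$. Since $\sSd$ is geodesically complete, this will follow by Taylor's formula along minimizing great circles from a uniform upper bound $\Hess_y[c(x,y)+\psieot{n}(y)]\preceq \Lambda\,\Id$. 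The key observation is the covariance identity for log-sum-exp potentials: differentiating $\psieot{n+1}(y)=\varepsilon\log\int_\cX\exp(-(c(x,y)+\phieot{n+1}(x))/\varepsilon)\,\rho(\De x)$ twice along geodesics gives
\[
\Hess_y\psieot{n+1}(y)\;=\;-\mathbb{E}_{\mu^{n+1}_y}\!\bigl[\Hess_y c(X,y)\bigr]\;+\;\frac{1}{\varepsilon}\,\mathrm{Cov}_{\mu^{n+1}_y}\!\bigl[\nabla_y^{\sSd}c(X,y)\bigr],
\]
where $\mu^{n+1}_y(\De x)\propto e^{-(c(x,y)+\phieot{n+1}(x))/\varepsilon}\rho(\De x)$ is the conditional of $\pi^{n+1,n+1}$ given $Y=y$. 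Adding $\Hess_y c(x,y)$ and taking suprema yields the $n$-uniform estimate
\[
\Hess_y\bigl[c(x,y)+\psieot{n+1}(y)\bigr]\;\preceq\;\Delta_c\,\Id\;+\;\frac{1}{\varepsilon}\,S_c\,\Id,\qquad x\in\supp(\rho),\;n\geq 0,
\]
with $\Delta_c\coloneqq\sup_{x,x',y}\lambda_{\max}(\Hess_y c(x,y)-\Hess_y c(x',y))$ and $S_c\coloneqq\sup_{x,y}|\nabla_y^{\sSd}c(x,y)|^2$. Plugging $\Lambda=\Delta_c+S_c/\varepsilon$ into \Cref{cor:exp_conv_from_stab}\ref{item:exp_conv_from_stab_i} and choosing $\varepsilon$ small enough so that $\min\{\tau\Lambda,\varepsilon\}=\varepsilon$ produces the claimed rate $1-\varepsilon/(\varepsilon+\tau\Lambda)$.

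For case (i), a direct computation with the great-circle geodesic $\gamma(t)=\cos(t|v|)y+\sin(t|v|)v/|v|$ gives $\Hess_y^{\sSd}[-\langle x,\cdot\rangle](v,v)=\langle x,y\rangle\,|v|^2$ and $|\nabla_y^{\sSd}[-\langle x,\cdot\rangle]|^2=1-\langle x,y\rangle^2$. Since $|\langle x,y\rangle|\leq 1$, this yields $\Delta_c\leq 2$ and $S_c\leq 1$, hence $\Lambda=2+1/\varepsilon$. Substituting into the rate formula, $1-\varepsilon/(\varepsilon+\tau(2+1/\varepsilon))=1-\varepsilon^2/(\varepsilon^2+2\tau\varepsilon+\tau)$, matching the claim.

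For case (ii), set $u=\delta\langle x,y\rangle\in[-\delta,\delta]$ and apply the chain rule: $\nabla_y^{\sSd}c_\delta=-\frac{2\delta\arccos(u)}{\sqrt{1-u^2}}\,(x-\langle x,y\rangle y)$, so $|\nabla_y^{\sSd}c_\delta|^2\leq \frac{4\delta^2\arccos(u)^2}{1-u^2}(1-\langle x,y\rangle^2)$, which is bounded using $\arccos(u)\leq\pi$ and $\sqrt{1-u^2}\geq\sqrt{1-\delta^2}$. The Hessian computation is the main obstacle: expanding $\Hess_y^{\sSd} c_\delta$ along great circles yields contributions from the first and second derivatives of $t\mapsto\arccos(t)^2$, namely $\tfrac{2}{1-t^2}+\tfrac{2t\arccos(t)}{(1-t^2)^{3/2}}$, together with the curvature correction $-\langle\nabla^{\mathbb{R}^{d+1}}(\arccos(\delta\langle x,\cdot\rangle)^2),y\rangle\,\Id$ arising from the embedding. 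Both terms remain bounded on $|u|\leq\delta<1$, but they deteriorate polynomially in $1/\sqrt{1-\delta^2}$, which is why the singular case $\delta=1$ must be avoided. Tracking these dependencies carefully produces $\Delta_c$ and $S_c$ controlled by $\delta^2+4\pi/\sqrt{1-\delta^2}$ and $4\pi^2/(1-\delta^2)$ respectively, and \Cref{cor:exp_conv_from_stab}\ref{item:exp_conv_from_stab_i} then gives the announced rate under the small-$\varepsilon$ condition \eqref{eps:small:sfera:delta}.
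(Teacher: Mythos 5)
Your proposal is correct and follows essentially the same route as the paper: both bound $\Hess_\sSd\bigl(c(x,\cdot)+\psieot{n}\bigr)$ via the identity expressing $\Hess_\sSd\psieot{n}$ as minus a conditional expectation of $\Hess_\sSd c$ plus $\varepsilon^{-1}$ times a conditional covariance of $\nabla_\sSd c$, compute gradient and Hessian of the two costs along great-circle geodesics, and feed the resulting $\Lambda=\Theta(1)+\Theta(\varepsilon^{-1})$ into \Cref{cor:exp_conv_from_stab}-\ref{item:exp_conv_from_stab_i}. The only cosmetic difference is your packaging of the constants as $\Delta_c$ and $S_c$, and the case (ii) Hessian bound is sketched rather than computed explicitly as in \eqref{eq:expl:hess:sphere}, but the structure and the resulting constants match.
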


As a final example, we show that our result also applies to the classical Schr\"odinger problem \cite{Schr32, LeoSch} on \emph{any} compact smooth Riemannian manifold $(M,g)$. This problem is usually written as the entropy minimization
\be\label{eq:SP}
\inf_{\pi \in \Pi(\rho,\nu)}\scrH(\pi|R_{0\varepsilon})
\ee
where $R_{0\varepsilon}(\De x\De y) = \msp_\varepsilon(x,y){\textrm{vol}}(\De x){\textrm{vol}} (\De y)$, $\msp_\varepsilon(x,y)$ being the heat kernel (namely the unique solution  at time $\varepsilon$ of the `heat equation' $\partial_t u = \frac12 \Delta u$ for the initial condition $u(0,x)=\delta_x$) and ${\textrm{vol}}$ the volume form. As explained for instance in \cite[Section 3]{lagg2022gradient}, \eqref{eq:SP} can be recast as an \ref{EOT} problem with cost function
\[
c_\varepsilon(x,y) = - \varepsilon\log\msp_\varepsilon(x,y)\,.
\]
Since upper and lower bounds on the sectional curvatures of $M$ allow to control the Hessian of $\log\msp_t$, we obtain the following

\begin{proposition}[Heat kernel cost]\label{prop:manifoldsSP}
Let $M$ be a compact smooth Riemannian manifold and assume that $\nu$ satisfies \ref{eq:TI}. Then for every $\varepsilon \in (0,1]$ Sinkhorn's algorithm converges exponentially fast. Moreover, as soon as $\varepsilon$ is small enough (cf.~\eqref{eps:small:manifold:gen}) the exponential convergence rate equals
\bes
1-\frac{\varepsilon^2}{\varepsilon^2+  2\tau C'(\varepsilon+{\textrm{diam}}(M)^2) + \tau C(\varepsilon+\kappa^-\varepsilon(1+ \varepsilon)+1) }
\ees
where $\kappa \in \mathbb{R}$ denotes a lower bound for the Ricci curvature and $C,C'$ are constants depending on the dimension, Ricci and sectional curvatures of $M$. Finally, if $M$ has non-negative Ricci curvature, then the exponential convergence of Sinkhorn's algorithm holds for any regularizing parameter $\varepsilon>0$.
\end{proposition}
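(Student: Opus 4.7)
The plan is to invoke \Cref{cor:exp_conv_from_stab}\ref{item:exp_conv_from_stab_i}: since $\nu$ satisfies \ref{eq:TI} by assumption, all that remains is to produce a constant $\Lambda>0$ such that $y\mapsto c_\varepsilon(x,y)+\psieot{n}(y)$ is $\Lambda$-semiconcave uniformly in $x\in\supp(\rho)$ and $n\geq N$ for some $N$. I would write $\Lambda=\Lambda_c(\varepsilon)+\Lambda_\psi(\varepsilon)$ where $\Lambda_c$ bounds $\nabla_y^2 c_\varepsilon$ from above and $\Lambda_\psi$ bounds $\nabla^2\psieot{n}$ from above, and then rearrange the contraction rate $1-\varepsilon/(\varepsilon+\tau\Lambda)$ produced by \Cref{cor:exp_conv_from_stab}\ref{item:exp_conv_from_stab_i} (valid under the condition $\min\{\tau\Lambda,\varepsilon\}=\varepsilon$, which determines the smallness range~\eqref{eps:small:manifold:gen}) into the form stated in the proposition by multiplying numerator and denominator by $\varepsilon$.

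For the cost $c_\varepsilon=-\varepsilon\log\msp_\varepsilon$, I would invoke the classical pointwise upper estimates for the Hessian of the log-heat kernel on compact Riemannian manifolds (in the spirit of Sheu, Stroock--Turetsky, Hsu): on a compact smooth $(M,g)$ with Ricci curvature bounded from below by $\kappa$ and bounded sectional curvatures, one has an inequality of the form
\[
\nabla_y^2\bigl(-\log\msp_t(x,y)\bigr)\preceq C(M)\Bigl(\tfrac{1}{t}+\kappa^-(1+t)+1\Bigr)\,\mathrm{Id}
\]
uniformly in $x,y\in M$ and $t\in(0,1]$, with $C(M)$ depending only on dimension and on the curvatures. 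Multiplying by $\varepsilon$ and specialising to $t=\varepsilon$ gives $\Lambda_c\leq C(\varepsilon+\kappa^-\varepsilon(1+\varepsilon)+1)$, precisely the $\tau C(\cdot)$ term in the denominator of the stated rate.

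For the potential, I would use the standard covariance representation obtained by differentiating twice the log-integral defining $\psieot{n}=-\Phi^\rho_0(\phieot{n})$:
\[
\nabla^2\psieot{n}(y)=-\mathbb{E}_{\pi^{n,n}(\cdot|y)}\bigl[\nabla_y^2c_\varepsilon(X,y)\bigr]+\tfrac{1}{\varepsilon}\,\mathrm{Cov}_{\pi^{n,n}(\cdot|y)}\bigl[\nabla_y c_\varepsilon(X,y)\bigr].
\]
The first summand is again controlled by the heat-kernel Hessian estimate above. The covariance is dominated in the semidefinite order by $\sup_{x,y}|\nabla_yc_\varepsilon(x,y)|^2\,\mathrm{Id}$, which a Hamilton-type gradient estimate for the heat kernel bounds by $C'(M)(\mathrm{diam}(M)^2+\varepsilon)$. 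Once divided by $\varepsilon$, this contributes a term of size $C'(\mathrm{diam}(M)^2+\varepsilon)/\varepsilon$ to $\Lambda_\psi$; after the multiplication by $\varepsilon\tau$ it matches the $2\tau C'(\varepsilon+\mathrm{diam}(M)^2)$ contribution in the statement. Crucially, compactness of $M$ makes all of these bounds uniform in $n\geq 1$, so no iterative propagation argument for the Hessian of Sinkhorn potentials is needed.

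Finally, when $\kappa\geq 0$, Bakry--Émery-type arguments provide sharper Hessian and gradient bounds for $\log\msp_t$ in which the $\kappa^-$ contribution vanishes and $\Lambda=O(1/\varepsilon)$ exactly; this makes the smallness condition $\min\{\tau\Lambda,\varepsilon\}=\varepsilon$ automatic for every $\varepsilon>0$ (possibly after absorbing a numerical constant into $\Lambda$), yielding exponential convergence at any regularization. I expect the main obstacle to lie in extracting, from the heat-kernel literature, Hessian and gradient estimates that are simultaneously uniform on $M\times M$ and sharp enough in $\varepsilon$, $\kappa^-$ and $\mathrm{diam}(M)^2$ to reproduce the exact shape of the rate in the proposition; iteration-uniformity in $n$, usually the most delicate ingredient, is instead granted for free by compactness of $M$.
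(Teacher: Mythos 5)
Your overall architecture is exactly the paper's: bound $\Hess_M c_\varepsilon$ and $\Hess_M\psieot{n}$ (the latter via the covariance identity \eqref{eq:hess:hess:cov:psieot}), observe that compactness makes everything uniform in $n$, and feed the resulting $\Lambda$ into Theorem~\ref{cor:exp_conv_from_stab}\ref{item:exp_conv_from_stab_i}. The gap lies in the two heat-kernel inputs, which you have essentially swapped. The bound you claim for the Hessian, $\nabla_y^2(-\log\msp_t)\preceq C(M)(\tfrac1t+\kappa^-(1+t)+1)$, is not what the literature provides and cannot hold as a two-sided estimate: you need a \emph{lower} bound on $\nabla_y^2 c_\varepsilon$ as well (to control the term $-\int\nabla_2^2c_\varepsilon\,\pi^{n,n}(\De x|y)$ in \eqref{eq:hess:hess:cov:psieot}), and since $c_\varepsilon\to\sfd^2/2$ as $\varepsilon\to0$, whose Hessian is unbounded below at the cut locus, any such bound must degenerate. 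The correct input is Stroock's two-sided estimate $\pm C'\bigl(1+\varepsilon^{-1}{\rm diam}(M)^2\bigr)\mathfrak{g}$, valid for $\varepsilon\le1$; used twice (once for $\Hess c_\varepsilon$, once inside $\Hess\psieot{n}$) it produces the $2\tau C'(\varepsilon+{\rm diam}(M)^2)$ term. Conversely, the $\tau C(\varepsilon+\kappa^-\varepsilon(1+\varepsilon)+1)$ term comes entirely from the covariance of $\nabla_y c_\varepsilon$: Hamilton's gradient estimate $t|\nabla\log u|^2\le(1+2\kappa^-t)\log(\|u(0,\cdot)\|_{\rmL^\infty}/u)$, combined with Li--Yau Gaussian bounds and Bishop--Gromov, gives $\varepsilon^2|\nabla\log\msp_\varepsilon|^2\le C(1+\kappa^-\varepsilon)(1+\varepsilon)$ with ${\rm diam}(M)^2$ absorbed into $C$; your bound $|\nabla_yc_\varepsilon|^2\le C'({\rm diam}(M)^2+\varepsilon)$ drops the $(1+\kappa^-\varepsilon)$ factor, which is precisely where $\kappa^-$ enters the rate.

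A second, smaller issue is the non-negative Ricci case. The reason convergence extends to all $\varepsilon>0$ is not that the smallness condition $\min\{\tau\Lambda,\varepsilon\}=\varepsilon$ becomes automatic (it need not be: Theorem~\ref{cor:exp_conv_from_stab} already yields exponential convergence for any $\varepsilon$ once $\Lambda<\infty$). The obstruction for $\varepsilon>1$ is the validity of the Hessian estimate itself: Stroock's bound rests on a two-sided Gaussian heat-kernel estimate which, for general $\kappa$, only holds for $t\le1$, but which holds for all $t>0$ when ${\rm Ric}_M\succeq0$ (the $e^{C_\kappa t}$ correction in Li--Yau disappears). That, and not a sharper form of $\Lambda$, is what removes the restriction $\varepsilon\in(0,1]$.
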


\subsubsection{A $(\Lambda,\omega)$-semiconcavity example}\label{subsection:omega:esempi}
We consider again $\cX=\cY=\bbRD$ with the quadratic cost $c(x,y)=|x-y|^2/2$ and two absolutely continuous marginals with densities $\rho(\De x)\propto \exp(-|x|^q-\delta|x|^2)\De x$ and $\nu(\De y)\propto \exp(-\min\{|y|^2,|y|^p_p\})\De y$ with $q\geq 2$ and $p\in(1,2)$ (recall that $|y|_p=(\sum_{i=1}^d|y_i|^p)^{1/p}$).
By construction $\nu$ is equivalent to $\tilde{\nu}(\De y)\propto e^{-|y|^p_p}$ but its negative log-density is globally semiconcave. In particular, the second derivative does not blow up at $y=0$. One of the challenges of the current setup is that $\nu$ does not satisfy Talagrand's inequality \ref{eq:TI}
but rather the weaker transport inequalities \ref{eq:generalized:TI} and \eqref{eq:generalized:TI:gamma} (as shown in \cite{gentil2005modified}). To profit from this inequality, we need to show that $\psi$ is not simply semiconcave, but its gradient behaves like a H\"older function for distant points. We do so relying on the results of \cite{gozlan2025global}. As we clarify below, not even a polynomial rate of convergence for optimal plans was known in the setting considered here.
\begin{proposition}\label{prop_heavy_sink}
     Let $\cX=\cY=\bbRD$ and consider the quadratic cost $c(x,y)=|x-y|^2/2$ and two absolutely continuous marginals with densities $\rho(\De x)\propto \exp(-|x|^q-\delta|x|^2)\De x$ and $\nu(\De y)\propto \exp(-\min\{|y|^2,|y|^p_p\})\De y$ with $q\geq 2,\delta>0$, $p\in(1,2)$ and $\nicefrac1p+\nicefrac1q\leq 1$. Starting with $\psieot{0}=0,$ Sinkhorn's algorithm converges exponentially fast and there exists $K>0$ independent of $\varepsilon$ such that the rate is 
    \begin{equation*}
     1-\frac{\min\{ K,\,\varepsilon\}}{\min\{K,\,\varepsilon\}+K}
    \end{equation*}
\end{proposition}

Finally, let us conclude mentioning that in here we have focused on a specific example of application of our $(\Lambda,\omega)$-semiconcavity argument for the sake of clarity. Nevertheless, \Cref{thm:entropic_stability:gen} and \Cref{thm:generalized:sink} could be applied in a broader framework.

\subsection{Literature review: Sinkhorn's algorithm}

\subsubsection*{Bounded costs.} Sinkhorn's algorithm has a long history, going back at least to the works of Sinkhorn \cite{Sinkhorn64} and Sinkhorn and Knopp \cite{SinkhornKnopp67} in the discrete setting. Here, it is employed as an algorithm to construct a matrix with prescribed rows and columns sums. Other important early contributions include \cite{Franklin89hilbert,borwein1994entropy}. In particular, \cite{Franklin89hilbert} introduced Hilbert's metric as a tool to prove exponential convergence. The realization \cite{cuturi2013sinkhorn}  that \ref{EOT} provides with a numerically more tractable version of the Monge-Kantorovich problem triggered an explosion of interest on the subject. For bounded costs, \cite{chen2016hilbertmetric} obtained the first exponential convergence results in the continuous setting using the Hilbert metric approach and  \cite{marinogerolin2020} establishes qualitative $\rmL^p$ convergence  and regularity estimates for Sinkhorn iterates using an optimal transport approach; in particular these results apply to the multimarginal case as well. The exponential convergence of the algorithm in the multimarginal setting is a result of Carlier \cite{Carlier22multisink}. A more probabilistic viewpoint is introduced in \cite{greco2023SinkhornTorus}, where contraction estimates are obtained by means of coupling arguments. The work \cite{chizat2024sharper} shows that exponential convergence takes place for semiconcave bounded costs, under various sets of hypotheses on the marginals. 
The main innovation of this work consists in showing that the exponential rate of convergence deteriorates polynomially in $\varepsilon$. This is in contrast with previous works that exhibited a rate of convergence that decays exponentially in $\varepsilon$.

\subsubsection*{Unbounded costs.} For unbounded costs and marginals, R\"uschendorf \cite{ruschendorf1995convergence} establishes qualitative convergence for Sinkhorn plans in relative entropy. These results are improved in \cite{nutz2021entropic} where the authors manage to show qualitative convergence on both the primal and dual side under mild assumptions on the cost and marginals. The work by L\'eger \cite{leger2021gradient} provides with a insightful interpretation of Sinkhorn's algorithm as a block-coordinate descent algorithm on the dual problem, see also \cite{aubin2022mirror,leger2023gradient}. From this interpretation, it follows that the speed of convergence is at least $n^{-1}$ under minimal assumptions. Polynomial rates of convergence are also established in \cite{eckstein2021quantitative} as a consequence of the above mentioned stability results and subsequently improved in \cite{ghosal2022nutz}, exploiting a symmetrized version of the KL-divergence. It is only very recently that the first exponential convergence results for unbounded costs and marginals have appeared. To the best of our knowledge, the first article containing such results is \cite{conforti2023Sinkhorn}, which studies the quadratic cost. The main result is that if the marginals are weakly log-concave and $\varepsilon$ is large enough, exponential convergence of $\nabla \varphi^n$ to $\nabla\phieot{\nu}$  takes place. This kind of convergence is particularly useful as $\nabla\phieot{\nu}$ approximates the Brenier map in the $\varepsilon\rightarrow 0$ limit, see \cite{lagg2022gradient,pooladian2021entropic}. The proof follows a geometric approach that highlights that semiconvexity and semiconcavity bounds on Sinkhorn potentials can be leveraged to obtain exponential convergence. Subsequently, the article \cite{eckstein2023hilberts} succeeds in constructing versions of the Hilbert metric that are contractive for general unbounded costs. In contrast with \cite{conforti2023Sinkhorn}, exponential convergence is shown for all values of $\varepsilon$. Roughly speaking, the main assumption is that there exist some $p>0$ such that $c(x,y)$ grows no faster than $|x|^p+|y|^p$ and the tails of both $\mu$ and $\nu$ decay faster than $\exp(-r^{p+\delta})$ for some $\delta>0$. When applied to the quadratic cost, this assumptions does not completely cover log-concave distributions and their perturbations, leaving out Gaussian marginals for example.  Over the past few years, a number of relevant contributions  focused on different asymptotic properties of Sinkhorn's algorithm than the speed of convergence. It would be impossible to account for all of them here. Let us just mention \cite{berman2020sinkhorn} for the relation with Monge-Amp\`ere equation, \cite{deb2023wasserstein} for the construction of Wasserstein mirror gradient flows, and \cite{sander2022sinkformers} for construction of a Transformer variant inspired by Sinkhorn's algorithm.

\subsubsection{A more precise comparison for the Euclidean quadratic cost }
Here we compare the convergence rates obtained for the quadratic cost setting in this paper with the most recent exponential convergence rates deduced in \cite{conforti2023Sinkhorn, eckstein2023hilberts, chizat2024sharper}. The forthcoming discussion is summarized in \Cref{table:comparison}.

\begin{table}[h!]
\renewcommand{\arraystretch}{1.5}
\resizebox{.99\textwidth}{!}{
\begin{tabular}{|c|c|c|c|c|}
\hline
\textbf{Assumptions}  & \cite{conforti2023Sinkhorn} &\cite{eckstein2023hilberts}& \cite{chizat2024sharper} & \textbf{Our results ($\nu\in$ \ref{eq:TI})}\\
\hline
\makecell{$\rho$ is $\alpha_\rho$-log-concave\\
$\nu$ is $\beta_\nu$-log-convex}&\makecell{$1-\Theta(\varepsilon)$ \\ {\footnotesize if $\mu,\nu$ are Gaussians} \\
{\footnotesize $\varepsilon>\varepsilon_0$ if also $\nu$ log-concave}} &\diagbox{}{} &\diagbox{}{} & $1-\Theta(\frac{\varepsilon}{\tau}\,\sqrt{\nicefrac{\alpha_\rho}{\beta_\nu}})$\\ 
\hline 
$\rho$ is $\alpha_\rho$-log-concave & \makecell{{\footnotesize $\varepsilon>\varepsilon_0$}\\
{\footnotesize if also $\nu$  weak log-concave}} &\diagbox{}{}& \makecell{$1-\Theta(\nicefrac{\varepsilon}{R^2})$\\  {\footnotesize if $\supp(\rho),\supp(\nu)\subseteq B_R(0)$}\\
{\footnotesize and $\supp(\rho)$ is convex}} &$1-\Theta( \varepsilon^2\,\nicefrac{\alpha_\rho}{\tau})$\\
\hline
$\rho$ is $(\alpha_\rho,L)$-weak log-concave &\makecell{{\footnotesize $\varepsilon>\varepsilon_0$}\\
{\footnotesize if also $\nu$  weak log-concave}}&\diagbox{}{}&\diagbox{}{}& $1-\Theta(\nicefrac{\varepsilon^2\alpha_\rho^2}{(\alpha_\rho+L)\tau})$
\\
\hline
$\rho$ with light tails & \makecell{\footnotesize{$\varepsilon>\varepsilon_0$}\\ \footnotesize{if also $\nu$ has light tails} }& \makecell{$1-\Theta(e^{-\varepsilon^{-1}})$ \\ \footnotesize{if also $\nu$ has light tails}\\
\footnotesize{weaker notion of light tails}} & \diagbox{}{} & $1-\Theta(\nicefrac{\varepsilon^2}{\tau(1+L^2\,[R^2\vee C^{-\nicefrac2\delta}])})$
\\
\hline
$\supp(\rho)\subseteq B_R(0)$ &\diagbox{} &\makecell{$1-\Theta(e^{-\varepsilon^{-1}})$\\
{\footnotesize if also $\nu$ has light tails}\\
\footnotesize{weaker notion of light tails}}&\makecell{\\ $1-\Theta(\nicefrac{\varepsilon^2}{R^4})$\\
{\footnotesize if also $\supp(\nu)\subseteq B_R(0)$, }\\
{\footnotesize $\supp(\rho)$ is convex}\\
{\footnotesize and $m\leq\log\rho(x)\leq M$}\\\phantom{}}&$1-\Theta(\nicefrac{\varepsilon^2}{\tau R^2})$\\
\hline
\makecell{$\rho(\mathrm{d}x)\propto e^{-|x|^q+\delta|x|^2}$,  $q\geq 2$\\
$\nu(\mathrm{dy})\propto e^{-\min\{|y|^2,|y|^p_p\}}$, $p\in(1,2)$\\ $\nicefrac1p+\nicefrac1q\leq 1$}&\diagbox{} & \diagbox{}&\diagbox{} & $1-\Theta(\nicefrac{\varepsilon}{ K})^\star$\\ 
\hline
\end{tabular}}

\caption{\footnotesize{Comparison of rates for the Euclidean setting with quadratic cost. We always assume $\nu$ to satisfy the transport inequality \ref{eq:TI}. In the table $\Theta(f)$ is a function for which there are universal constants $c,C>0$ such that $c f\leq \Theta(f)\leq C f$. $\star$ : In the last row $\nu$ does not need to satisfy \ref{eq:TI}, since it satisfies the generalized \ref{eq:generalized:TI} with $\tau$ depending on $p,q$.}}
\label{table:comparison}
\end{table}

The main novelties that we introduce in the quadratic setting are the following.
\begin{itemize}
\item We obtain for the first time exponential convergence for all $\varepsilon>0$ under the assumption that $\rho$ is $\alpha_{\rho}$-log-concave, $\nu$ is $\beta_\nu$-log-semiconvex and satisfies a Talagrand inequality. The convergence rate we obtain is $1-\Theta(\frac{\varepsilon}{\tau}\,\sqrt{\nicefrac{\alpha_\rho}{\beta_\nu}})$, which has a sharp dependence on $\varepsilon$ thanks to \cite[Theorem 1.3]{chizat2024sharper}. The only result that covers this setup is obtained in \cite{conforti2023Sinkhorn} but it gives exponential convergence  only for $\varepsilon$ large enough.
    \item We obtain for the first time exponential convergence assuming that $\rho$ is $\alpha_\rho$-log-concave and $\nu$ satisfies \ref{eq:TI}. The convergence rate is $1-\Theta( \varepsilon^2\,\nicefrac{\alpha_\rho}{\tau})$. There are two results we might compare with ours. In the unbounded setting \cite{conforti2023Sinkhorn} proves convergence for $\varepsilon>\varepsilon_0$ (with the noise threshold $\varepsilon_0$ being zero solely in the Gaussian setting). The second one is \cite{chizat2024sharper} where the authors manage to get a rate which in linear in $\varepsilon$ but have to assume that both $\rho$ and $\nu$ have compact support, which we do not. This rate has better dependence in $\varepsilon$ but it deteriorates with the size of the support. 
    Our result shows that indeed stronger log-concavity of $\rho$ improves the convergence rate, addressing an issue raised in \cite[Remark 3.3]{chizat2024sharper}.
\item  For weakly log-concave distributions we obtain for the first time exponential convergence for all $\varepsilon>0$. The only previously known result \cite{conforti2023Sinkhorn} requires $\varepsilon$ to be larger than a threshold value $\varepsilon_0$. Moreover, we show that the rate of convergence is $1-\Theta(\nicefrac{\varepsilon^2\alpha_\rho^2}{L\tau})$ without having to assume neither compactness nor convexity of the marginals' supports.
\item In the light-tails setting, we improve the rate of convergence, from an exponential dependence on $\varepsilon$ in \cite{eckstein2023hilberts}, to the polynomial $1-\Theta(\nicefrac{\varepsilon^2}{\tau(1+L^2\,[R^2\vee C^{-\nicefrac2\delta}])})$. Moreover, we ask that only one of two marginals has tails lighter than Gaussian, whereas \cite{eckstein2023hilberts} requires this property to hold for both marginals. However, one should note that  the result of \cite{eckstein2023hilberts} applies to more general costs and marginals. For example, the light-tail condition considered therein can be expressed as a condition on the growth $U_{\rho}$ rather than its Hessian. Moreover, the cost is not required to have a bounded Hessian.
      \item When dealing with $\rho$ compactly supported, we show a rate $1-\Theta(\nicefrac{\varepsilon^2}{\tau R^2})$ under the assumption that both marginals have compact support and one of them satisfies  \ref{eq:TI}. 
    This should be compared to \cite[Theorem 3.1-(A1)]{chizat2024sharper}. There, the authors get the same dependence in $\varepsilon^2$ assuming that one marginal is compactly supported, and the other one has a uniformly upper- and lower-bounded density on its support, that is taken to be compact and convex. This last assumption implies LSI and is therefore stronger than asking that TI holds, cf.\ \cite[Proposition 5.3]{Bobkov2000FromBrunn}, discussion therein and \cite[Proposition 5.1.6]{bakry2013analysis}. Indeed, if LSI holds, TI also holds.
    As explained in the discussion following Proposition \ref{prop:compact:supp:sink}, the authors also prove that the asymptotic rate of convergence is of order $\varepsilon$.
    \item  In the heavy-tails setting of Proposition \ref{prop_heavy_sink} no rate of convergence for optimal plans were known before, to the best knowledge and understanding. Indeed, the polynomial rates in \cite{ghosal2022nutz} would require both $\rho$ and $\nu$ to be subgaussian, whereas the exponential rates in \cite{eckstein2023hilberts} would require even lighter tails.
\end{itemize}

The proof of all the above propositions is obtained bounding from above the Hessian of Sinkhorn potentials uniformly in $n$ and then invoking Theorem \ref{cor:exp_conv_from_stab}. To control Hessians, we leverage their representation in terms of conditional covariances (cf.~\eqref{eq:hess:hess:cov:psieot}  below). Then, in most cases we proceed to bound covariances by means of functional inequalities and perturbative arguments. In the case of log-concave and weakly log-concave marginals, we argue differently by showing that the map $\Psi^\nu_0(\cdot)$ preserves concavity, and that $\Phi^{\rho}_0(\cdot)$ preserves  convexity and weak convexity. To do so, we rely on Pr\'ekopa--Leindler inequality following  \cite{fathi2019proof, chewi2022entropic} when assuming strong log-concavity, and on the more probabilistic constructions of \cite{conforti2024weak} when assuming weak log-concavity.

%%%%%%%%%%%%%%%%%%%%%%%%%%%%%%%%%%%%%%%%%%%
% PROOFS.TEX
%%%%%%%%%%%%%%%%%%%%%%%%%%%%%%%%%%%%%%%%%%

\section{Proofs}

\subsection{Preliminaries}

For the proof of \Cref{thm:entropic_stability}, We shall need that the conditional distribution of $\pieot{\nu}$ with respect to the second component can be written as
\be\label{eq:conditional:density}
\begin{aligned}
\pieot{\nu}(\De x|y)&=\exp\biggl(-\frac{c(x,y)+\phieot{\nu}(x)+\psieot{\nu}(y)}{\varepsilon}\biggr) \rho(\De x)\,.\\
\end{aligned}
\ee
Note that in principle the conditional distribution is only defined $\nu$-a.e. Nonetheless, under the semiconcavity assumption  of \Cref{thm:entropic_stability}, one can see that $\psi_\varepsilon^\nu$ is locally bounded from above, and thus we can use the right hand side of \eqref{eq:conditional:density} to extend the definition of the conditional measure to the whole $\cY$. The same considerations apply to Sinkhorn plans in all examples. That is to say
\bes
\begin{aligned}
\pi^{n,n}(\De x|y)&=\exp\biggl(-\frac{c(x,y)+\phieot{n}(x)+\psieot{n}(y)}{\varepsilon}\biggr) \rho(\De x)\,,\\
\pi^{n,n}(\De y|x)&=\exp\biggl(-\frac{c(x,y)+\psieot{n}(y)+\phieot{n}(x)}{\varepsilon}\biggr) \nu(\De y)\,.
\end{aligned}
\ees
are versions of the conditional probability defined everywhere on $\cY$ and $\cX$ respectively.
We will use at several places the following identities along Sinkhorn's algorithm
\be\label{eq:hess:hess:cov:psieot}
\begin{aligned}
\nabla\psieot{n}(y)&=-\int_\cX\nabla_2 c(x,y)\,\pi^{n,n}(\De x|y)\,,\\
\nabla^2\psieot{n}(y)&=-\int_\cX\nabla_2^2 c(x,y)\,\pi^{n,n}(\De x|y)+\varepsilon^{-1} \,\mathrm{Cov}_{X\sim\pi^{n,n}(\cdot|y)}(\nabla_2 c(X,y))\,.
\end{aligned}
\ee
Similarly, whenever also $(\cX,\mathfrak{g})$ is a Riemannian manifold we have
\be\label{eq:hess:hess:cov:phieot}
\begin{aligned}
\nabla\phieot{n+1}(x)&=-\int_\cY\nabla_1 c(x,y)\,\pi^{n,n}(\De y|x)\,,\\
\nabla^2\phieot{n+1}(x)&=-\int_\cY\nabla_1^2 c(x,y)\,\pi^{n,n}(\De y|x)+\varepsilon^{-1} \,\mathrm{Cov}_{Y\sim\pi^{n,n}(\cdot|x)}(\nabla_1 c(x,Y))\,.
\end{aligned}
\ee 
These estimates actually hold true also for the limit entropic potentials in a weak sense and have been already extensively studied in the EOT literature (see \cite[Lemma 6]{chewi2022entropic} and \cite[Appendix A]{conforti2023projected}).

Finally, let us point out here that the identities \eqref{eq:hess:hess:cov:phieot} are not employed in the proofs of Theorem \ref{thm:entropic_stability}, Theorem \ref{thm:entropic_stability:gen},  Theorem \ref{cor:exp_conv_from_stab} and in Theorem \ref{thm:generalized:sink}, where $\cX$ can be taken to be Polish. In particular, solely \Cref{thm:entropic_stability} and \Cref{thm:entropic_stability:gen} require the validity of the first identity appearing in~\eqref{eq:hess:hess:cov:psieot} for the limit entropic potential $\psieot{\nu}$ (or equivalently the validity of \eqref{eq:diff:assumpotion}), which is guaranteed by \Cref{ass:differentiability}. 
Concerning Theorem \ref{cor:exp_conv_from_stab} and \Cref{thm:generalized:sink}, our proof technique requires the validity of the first equation in~\eqref{eq:hess:hess:cov:psieot}  and hence the validity of \eqref{eq:diff:assumpotion} along Sinkhorn iterates $\psieot{n}$. In order to guarantee the validity of the differentiation under the integral sign is enough starting Sinkhorn's algorithm with $\phieot{0}$ smooth enough and requiring $c(x,\cdot)$ to be $\cC^1(\cY)$.

Furthermore, the second identity in~\eqref{eq:hess:hess:cov:psieot} as well as the identities in~\eqref{eq:hess:hess:cov:phieot} are sometimes used to study the semiconcavity of potentials in  the examples we discuss at~\cref{sec:esempi}.
In all these cases, $\cX$ is either the Euclidean space or a smooth Riemannian manifold, the cost is either $\cC^2$ with bounded Hessian or Lipschitz, and the assumptions on the marginals provide enough regularity to argue as in \cite[Proposition 4.4 and Lemma 4.5]{conforti2023projected} (combined with \Cref{lem_der_under_integral_sign}). In there, the authors show that $\phieot{\nu}$ and $\psieot{\nu}$ are $\cC^1$ with H\"older continuous first derivative, and that the gradient identities hold true whereas the Hessian identities hold in a weak sense. An independent proof of both identities in a strong sense in the quadratic setting can be found in \cite[Lemma 6]{chewi2022entropic}.

\subsection{Proof of the main results}
In this section we provide the proofs of \Cref{thm:entropic_stability}, \Cref{thm:entropic_stability:gen}, \Cref{cor:exp_conv_from_stab} and \Cref{thm:generalized:sink}. The key estimate required for \Cref{thm:entropic_stability} is contained in \Cref{lem:entropy_between_conditionals}, which bounds the relative entropy of the conditional distribution of the entropic plan at two different points by their distance squared.

\begin{lemma}\label{lem:entropy_between_conditionals}
Under the assumptions of Theorem \ref{thm:entropic_stability}, let $\pieot{\nu}$ be the entropic plan between $\rho$ and $\nu$ and let $\pieot{\nu}(\cdot|y)$ denote the conditional distribution (conditioned on the second variable being equal to $y$). Then for all $y,z\in\cY$ we have	
\bes
\scrH(\pieot{\nu}(\cdot|y)|\pieot{\nu}(\cdot|z)) \leq \frac{\Lambda}{2\varepsilon}\,\sfd^2(y,z)\,.
\ees
\end{lemma}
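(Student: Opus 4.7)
The plan is to compute the relative entropy explicitly using the conditional density formula \eqref{eq:conditional:density}, then use the semiconcavity assumption to produce a quadratic-in-distance upper bound, while showing that the first-order (gradient) term cancels because of a first-order optimality identity satisfied by $\psieot{\nu}$.

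Concretely, I start from \eqref{eq:conditional:density}, which gives
\bes
\log\frac{\De\pieot{\nu}(\cdot|y)}{\De\pieot{\nu}(\cdot|z)}(x)=-\frac{1}{\varepsilon}\bigl[\,c(x,y)+\psieot{\nu}(y)-c(x,z)-\psieot{\nu}(z)\,\bigr].
\ees
Integrating against $\pieot{\nu}(\De x|y)$ yields
\bes
\scrH(\pieot{\nu}(\cdot|y)|\pieot{\nu}(\cdot|z))=\frac{1}{\varepsilon}\int_{\cX}\bigl[\,F_x(z)-F_x(y)\,\bigr]\pieot{\nu}(\De x|y),
\ees
where I have set $F_x(w):=c(x,w)+\psieot{\nu}(w)$. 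By hypothesis $F_x$ is $\Lambda$-semiconcave uniformly in $x\in\supp(\rho)$, so, fixing a minimizing geodesic $(\gamma_t)_{t\in[0,1]}$ with $\gamma_0=y$, $\gamma_1=z$, definition \eqref{def:Lsemiconcave} gives
\bes
F_x(z)-F_x(y)\leq \langle \nabla F_x(y),\dot\gamma_0\rangle_{\mathfrak{g}}+\frac{\Lambda}{2}\,\sfd(y,z)^2.
\ees

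Substituting and using linearity of the integral, the quadratic term is independent of $x$ and produces exactly $\frac{\Lambda}{2\varepsilon}\sfd^2(y,z)$. It remains to show that the gradient term integrates to zero. Writing $\nabla F_x(y)=\nabla_2 c(x,y)+\nabla\psieot{\nu}(y)$ and using the first identity in \eqref{eq:hess:hess:cov:psieot} (applied to $\pieot{\nu}$ in place of the Sinkhorn plan, which is the standard differentiation of the Schr\"odinger potential $\psieot{\nu}$ under the integral sign, cf.\ the remark after \eqref{eq:hess:hess:cov:phieot}), I obtain
\bes
\int_{\cX}\nabla F_x(y)\,\pieot{\nu}(\De x|y)=\int_{\cX}\nabla_2 c(x,y)\,\pieot{\nu}(\De x|y)+\nabla\psieot{\nu}(y)=0,
\ees
so $\int \langle \nabla F_x(y),\dot\gamma_0\rangle_{\mathfrak{g}}\,\pieot{\nu}(\De x|y)=0$ and the claimed inequality follows.

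The main technical subtlety is ensuring the gradient identity for $\nabla\psieot{\nu}$ together with differentiation under the integral sign is valid at the single point $y$; the semiconcavity hypothesis guarantees local boundedness from below of $\psieot{\nu}$ (as noted in the paper just before \eqref{eq:conditional:density}), which, combined with the exponential form of the density, provides the integrability needed to justify the interchange. On a general Riemannian $(\cY,\mathfrak{g})$, everything is expressed via $\nabla\psieot{\nu}$ and the velocity $\dot\gamma_0$, so the argument proceeds identically to the Euclidean case with $z-y$ replaced by $\dot\gamma_0$ and $\sfd(y,z)$ being the Riemannian distance.
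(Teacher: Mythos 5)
Your proposal is correct and follows essentially the same route as the paper's proof: both write the relative entropy as $\frac1\varepsilon\int [g(x,z)-g(x,y)]\,\pieot{\nu}(\De x|y)$ with $g(x,\cdot)=c(x,\cdot)+\psieot{\nu}(\cdot)$, apply the $\Lambda$-semiconcavity along a geodesic from $y$ to $z$, and kill the first-order term via the gradient identity $\nabla\psieot{\nu}(y)=-\int\nabla_2 c(x,y)\,\pieot{\nu}(\De x|y)$ from \eqref{eq:hess:hess:cov:psieot}. Your closing remark on justifying differentiation under the integral sign is handled in the paper by appeal to \cite[Proposition 4.4 and Lemma 4.5]{conforti2023projected}, so nothing is missing.
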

\begin{proof}
Let $y,z$ be given and define the function 
\be
g(x,\bar y) = c(x,\bar y)+\psieot{\nu}(\bar y)
\ee
By assumption, $\bar y\mapsto g(x,\bar y)$ is $\Lambda$-semiconcave uniformly in $x\in\supp(\rho)$. From the representation of conditional distributions \eqref{eq:conditional:density} we immediately get
\be\label{eq:conditional_entropy_bound}
\begin{aligned}
 \varepsilon\,\scrH(\pieot{\nu}(\cdot|y)|\pieot{\nu}(\cdot|z)))& = \int_{\cX}g(x,z)-g(x,y)\,\pieot{\nu}(\De x|y)\\
&\leq \left\langle\dot{\gamma}_0 ,\, \int_{\cX} \nabla_2g(x,y)\,\pieot{\nu}(\De x|y) \right\rangle_{\!\!\mathfrak{g}}+ \frac{\Lambda}{2} \,\sfd^2(y,z)\,,
\end{aligned}
\ee
where $(\gamma_t)_{t\in[01]}$ is a geodesic from $y$ to $z$. Next, from~\eqref{eq:diff:assumpotion} we see that
\bes
\nabla_2g(x,y) = \nabla_2c(x,y)-\int_{\cX} \nabla_2c(\bar x,y)\, \pieot{\nu}(\De \bar x|y)\,,
\ees
from which it follows that 
\bes
\int_{\cX} \nabla_2g(x,y)\,\pieot{\nu}(\De x|y)=0\,.
\ees
Using this identity in \eqref{eq:conditional_entropy_bound} gives the desired result.
\end{proof}
\begin{proof}[Proof of \Cref{thm:entropic_stability}]
We assume without loss of generality that  $\scrH(\mu|\nu),\,\Lambda,\,\bfW_2^2(\mu,\nu)$ are all finite, otherwise there is nothing to prove, in particular $\mu$ is absolutely continuous with respect to $\nu$. First, we recall that $\pieot{\mu}$ can be seen as a Schr\"odinger optimal plan w.r.t.\ the reference measure $\pieot{\nu}$ for the Schr\"odinger problem, that is the (unique minimizer) for
\be\label{eq:SB_relative} 
\scrH(\pieot{\mu}|\pieot{\nu})= \min_{\pi\in\Pi(\rho,\mu)}\scrH(\pi|\pieot{\nu})\,.
\ee
This directly follows from \cite[Theorem 2.1.b]{Marcel:notes} after noticing that 
\bes
\frac{\De \pieot{\mu}}{\De \pieot\nu} = \exp\biggl(\frac{(\phieot{\nu}-\phieot{\mu})\oplus(\psieot{\nu}-\psieot{\mu})}{\varepsilon}\biggr)\,\frac{\De\mu}{\De\nu}\,,\quad\rho\otimes\nu\text{-a.s.}
\ees 
and hence also $\pieot{\nu}$-a.s.\ (since $\scrH(\pieot{\nu}|\rho\otimes\nu)<\infty$). 
We now proceed to bound $\scrH(\pieot{\mu}|\pieot{\nu})$ exhibiting a suitable admissible plan in~\eqref{eq:SB_relative}.  To this aim, fix a coupling  $\tau\in\Pi(\mu,\nu)$ between $\mu$ and $\nu$ and let us consider the coupling
\bes
\pi(\De x,\De y)\coloneqq \mu(\De y) \int_\cY \pieot{\nu}(\De x|z) \tau(\De z|y )\,.
\ees
Notice that $\pi\in\Pi(\rho,\mu)$ and therefore from \eqref{eq:SB_relative} it follows $\scrH(\pieot{\mu}|\pieot{\nu})\leq \scrH(\pi|\pieot{\nu})$. From the disintegration property of relative entropy (see for instance \cite[Appendix A]{LeoSch}) and from its convexity we may deduce that
\bes
	\begin{aligned}
		\scrH(\pieot{\mu}|\pieot{\nu})&\, \leq\scrH(\pi|\pieot{\nu}) \leq \scrH(\mu|\nu) + \int_{\cY} \scrH(\pi(\cdot|y)|\pieot{\nu}(\cdot|y))\mu(\De y)\\
		&\,\leq \scrH(\mu|\nu) + \int_\cY\int_\cY \scrH(\pieot{\nu}(\cdot|z)|\pieot{\nu}(\cdot|y))\tau(\De z|y)\mu(\De y)\\
		&\overset{\mathrm{\Cref{lem:entropy_between_conditionals}}}{\leq}  \scrH(\mu|\nu)  +\frac{\Lambda}{2\varepsilon}\,\int_\cY \sfd^2(y,z)\,\tau(\De y,\, \De z)\,.
	\end{aligned}
\ees
The desired conclusion follows by optimizing over $\tau\in\Pi(\mu,\nu)$.
\end{proof}

The proof of \Cref{thm:entropic_stability:gen} can be obtained in the same way and for this reason is omitted. Here we solely mention that the key estimate required for establishing \Cref{thm:entropic_stability:gen} is showing that for all $y,z\in\cY$ we have	
\bes
\scrH(\pieot{\nu}(\cdot|y)|\pieot{\nu}(\cdot|z)) \leq \frac{\Lambda}{2\varepsilon}\,\omega(y,z)\,,
\ees
which can be obtained as done in \Cref{lem:entropy_between_conditionals} for the case $\omega=\sfd^2$.

\begin{proof}[Proof of Theorem \ref{cor:exp_conv_from_stab}]
We start recalling the known identity
\be\label{entropy_difference_along_sink}
\begin{aligned}
\scrH(\pieot{\nu}|\pi^{n+1,n})-\scrH(\pieot{\nu}|\pi^{n,n-1}) 
&=\frac{1}{\varepsilon}\int (\phieot{n+1}-\phieot{n})\oplus(\psieot{n}-\psieot{n-1})\,\De\pieot{\nu}\\
&\stackrel{\eqref{eq:wrong_marginals}}{=}-(\scrH(\rho|\rho^{n,n})+\scrH(\nu|\nu^{n,n-1}))\,.
\end{aligned}
\ee
\noindent\textbf{Proof of \ref{item:exp_conv_from_stab_i}.} Let $n\geq N-1.$ From Theorem \ref{thm:entropic_stability}, applied to the pairs of marginals $(\rho,\nu)$ and $(\rho,\nu^{n+1,n })$, we obtain 
\bes
\begin{split}
\scrH(\pieot{\nu}|\pi^{n+1,n}) &\leq\,\scrH(\nu|\nu^{n+1,n})+\frac{\Lambda}{2\varepsilon} \bfW^2_2(\nu^{n+1,n},\nu)\\
&\stackrel{\eqref{eq:TI}}{\leq}\, \scrH(\nu|\nu^{n+1,n})+\frac{\tau\Lambda}{\varepsilon}\scrH(\nu^{n+1,n}|\nu)\\
&\leq\, \max\{1,\,\nicefrac{\tau\Lambda}{\varepsilon}\}\big( \scrH(\nu^{n+1,n}|\nu)+\scrH(\nu|\nu^{n+1,n})\big)\,.
\end{split}
\ees
Invoking Sinkhorn's monotonicity inequalities \cite[Proposition 6.10]{Marcel:notes}
\bes
\scrH(\nu^{n+1,n}|\nu) \leq \scrH(\rho|\rho^{n,n})\,, \quad \scrH(\nu|\nu^{n+1,n})\leq \scrH(\nu|\nu^{n,n-1})\,,
\ees
we arrive at the following bound
\bes
\scrH(\pieot{\nu}|\pi^{n+1,n}) \leq \max\{1,\,\nicefrac{\tau\Lambda}{\varepsilon}\}(\scrH(\rho|\rho^{n,n})+\scrH(\nu|\nu^{n,n-1}))\,.
\ees
Using this result in \eqref{entropy_difference_along_sink} gives 
\bes
\scrH(\pieot{\nu}|\pi^{n+1,n})-\scrH(\pieot{\nu}|\pi^{n,n-1}) \leq - \min\{1,\,\nicefrac{\varepsilon}{\tau\Lambda}\}\scrH(\pieot{\nu}|\pi^{n+1,n})\,.
\ees
We thus obtain from a simple recursion that for all $n\geq N-1$
\bes
\scrH(\pieot{\nu}|\pi^{n+1,n}) \leq (1+\min\{1,\,\nicefrac{\varepsilon}{\tau\Lambda}\})^{-(n-N+2)}\scrH(\pieot{\nu}|\pi^{N-1,N-2})\,.
\ees
From the monotonicity bounds (\cite[Proposition 6.5]{Marcel:notes})
\be\label{eq:sink_mon}
\scrH(\pieot{\nu}|\pi^{m+1,m+1})\leq \scrH(\pieot{\nu}|\pi^{m+1,m})\leq \scrH(\pieot{\nu}|\pi^{m,m})\,, \quad \forall m\geq 0\,,
\ee
we obtain  
\bes
\scrH(\pieot{\nu}|\pi^{n+1,n}) \leq (1+\min\{1,\,\nicefrac{\varepsilon}{\tau\Lambda}\})^{-(n-N+2)}\,\scrH(\pieot{\nu}|\pi^{1,0})\,, \quad \forall n\geq N-1\,.
\ees
The desired conclusion follows from the bounds
\be
\scrH(\pieot{\nu}|\pi^{1,0})\leq\scrH(\pieot{\nu}|\pi^{0,0}), \quad \scrH(\pieot{\nu}|\pi^{n+1,n+1})\leq \scrH(\pieot{\nu}|\pi^{n+1,n})\,,
\ee
which are a consequence of \eqref{eq:sink_mon}.
\vspace{\baselineskip}

\noindent\textbf{Proof of \ref{item:exp_conv_from_stab_ii}} 
Owing to~\Cref{thm:entropic_stability} applied to the pairs of marginals $(\rho,\nu)$ and $(\rho,\nu^{n,n-1})$ we see that 
\be\label{eq:stability_bound_in_sink}
\scrH(\pieot{\nu}|\pi^{n,n-1}) \leq \scrH(\nu|\nu^{n,n-1})+\frac{\Lambda}{2\varepsilon} \bfW^2_2(\nu^{n-1,n},\nu)\,,
\ee
and since $\nu^{n,n-1}$ satisfies \ref{eq:TI} for $n\geq N$ we conclude that
\bes
\scrH(\nu|\nu^{n,n-1}) \geq \big(1+\nicefrac{\tau\Lambda}{\varepsilon}\big)^{-1}\scrH(\pieot{\nu}|\pi^{n,n-1})\,.
\ees
Using this bound in  \eqref{entropy_difference_along_sink} then gives
\bes
\scrH(\pieot{\nu}|\pi^{n+1,n})\leq\big(1+\nicefrac{\varepsilon}{\tau\Lambda}\big)^{-1}\,\scrH(\pieot{\nu}|\pi^{n,n-1})\,.
\ees
The desired conclusion follows from a simple recursion and the monotonicity bounds \eqref{eq:sink_mon}.
\end{proof}

Similarly, from \Cref{thm:entropic_stability:gen} we can establish the exponential convergence of Sinkhorn's algorithm as stated in \Cref{thm:generalized:sink}.
\begin{proof}[Proof of \Cref{thm:generalized:sink}]
The proof of Parts \ref{item:exp_conv_from_stab_i:gen} and \ref{item:exp_conv_from_stab_ii:gen} runs as shown for  Parts \ref{item:exp_conv_from_stab_i} and \ref{item:exp_conv_from_stab_ii} of \Cref{cor:exp_conv_from_stab}, this time by relying on \Cref{thm:entropic_stability:gen} and on the generalized transport inequality~\ref{eq:generalized:TI}, instead of~\ref{eq:TI}.
\end{proof}

\medskip

Before moving to the computation of convergence rates, we recall that a probability measure $\rho$ satisfies a Poincar\'e inequality \ref{def:poincare:ineq} with constant $C_\rho>0$ if for any $f\in W^{1,2}(\rho)$ it holds
\be\label{def:poincare:ineq}\tag{PI($C_\rho$)}
\mathrm{Var}_{\rho}(f)\coloneqq \bbE_{\rho}[f^2(X)]-\bbE_{\rho}[f(X)]^2\leq C_\rho\, \int_\cX|\nabla f|^2\De \rho\,.
\ee
Moreover, let us recall here that any $\alpha$-log-concave measure satisfies the Talagrand transport inequality $\mathrm{TI}(\alpha^{-1})$ and the Poincar\'e inequality $\mathrm{PI}(\alpha^{-1})$ (see \cite[Corollaries 4.8.2 and 9.3.2]{bakry2013analysis}).

\subsection{Proof of Proposition \ref{prop:anisotropic:cost}}

Clearly we have $\nabla_2^2 c(x,y)=\Sigma$ and hence 
\be\label{anisotropic:covariance}
\nabla_2^2(c(x,y)+\psieot{n}(y))=\Sigma+\nabla^2\psieot{n}(y)\overset{\eqref{eq:hess:hess:cov:psieot}}{=}\varepsilon^{-1} \,\mathrm{Cov}_{X\sim\pi^{n,n}(\cdot|y)}(\Sigma(y-X))\,.
\ee
Therefore in this section, in order to study the semiconcavity of $y\mapsto c(x,y)+\psieot{n}(y)$, it is enough to control the conditional covariance matrices.

\subsubsection{Log-concavity of $\rho$} We start with the proof of (ii).
From \eqref{eq:hess:hess:cov:phieot} (applied along Sinkhorn's algorithm) it follows $\nabla^2\phieot{n}(x) \succeq -\Sigma$ for any $n\in\N$, which combined with \eqref{densities:along:sink} further implies that for all $y$
\bes
\nabla^2_1\big(-\log\pi^{n,n}(x|y)\big) \succeq \nabla^2U_\rho(x)\succeq \alpha_\rho\,,
\ees
where we wrote $\pi^{n,n}(x|y)$ for the density of $\pi^{n,n}(\De x|y)$ with respect to  the Lebesgue measure.
This guarantees that, uniformly in $y\in\bbRD$ and $n\in\N$, the conditional measure $\pi^{n,n}(\De x|y)$ satisfies the Poincar\'e inequality $\mathrm{PI}(\alpha_\rho^{-1})$ (cf. \cite[Corollaries 4.8.2]{bakry2013analysis}).

Then, from \eqref{anisotropic:covariance} we deduce that for any unit vector $v$ it holds
\bes
\varepsilon\,\langle v,\,\nabla_2^2(c(x,y)+\psieot{n}(y))v\rangle =\mathrm{Var}_{X\sim\pi^{n,n}(\cdot|y)}(\langle v,\,\Sigma X \rangle)\leq \alpha_\rho^{-1}\,\|\Sigma v\|^2\leq \alpha_\rho^{-1}\,\|\Sigma\|_2^2 \,.
\ees
This means that for any $n\in\N$ and uniformly in $x\in\supp(\rho)$ the map  $y\mapsto c(x,y)+\psieot{n}(y)$ is $\Lambda$-semiconcave on $\supp(\nu)$ with $\Lambda= (\varepsilon\alpha_\rho)^{-1}\|\Sigma\|_2^2$. This combined with \Cref{cor:exp_conv_from_stab}-\ref{item:exp_conv_from_stab_i} proves the exponential proves convergence of Sinkhorn's algorithm. If $\varepsilon\leq\|\Sigma\|_2\,\sqrt{\nicefrac{\tau}{\alpha_{\rho}}}$, then $\varepsilon\leq\tau\Lambda$  and the rate takes the form \eqref{eq:rate_log_conc}.

\subsubsection{Log-concavity of $\rho$ and log-semiconvexity of $\nu$}

We now discuss (i). Strengthening our assumption on the marginals, we can improve on the convergence rate. 
As a starter, we show the following convexity/concavity result that generalizes what is already known for the Euclidean quadratic cost in \cite{chewi2022entropic} and \cite[Theorem 10]{conforti2023Sinkhorn}.

\begin{lemma}\label{lemma:anisotropic:caff}
    Assume $\Sigma\succ 0$,  $\nabla^2U_\rho\succeq \alpha_\rho$ and $\nabla^2U_\nu\preceq\beta_\nu$ for some $\alpha_\rho>0$ and $\beta_\nu\in (0,+\infty)$. If $\nabla^2\phieot{0}\succeq -\Sigma+A_0\,\Sigma$ for some matrix $A_0\succeq 0$ commuting with $\Sigma$, then for any $n\in\N$ we have
    \be\label{eq:iterated:bound:caff}
    \nabla^2\phieot{n}\succeq -\Sigma +A_n\,\Sigma\quad\text{ and }\quad\nabla^2\psieot{n}\preceq -\Sigma+ B_n\,\Sigma\,,
    \ee
    where $(A_n)_{n\in\N}$ and $(B_n)_{n\in\N}$ are two sequences of positive semidefinite symmetric matrices iteratively defined via
    \be\label{def:iterated:caff}
    \begin{cases}
        B_{n}=\Sigma (A_n\Sigma+\varepsilon\alpha_\rho)^{-1}\,,\\
        A_{n+1}=        \Sigma(B_n\Sigma+\varepsilon\beta_\nu)^{-1}\,,
    \end{cases}
    \ee
    and converging to 
\be\label{Ainfty}
\begin{aligned}
A_\infty =&\,-\frac{\varepsilon\alpha_\rho}2\Sigma^{-1}+\biggl(\frac{\varepsilon^2\alpha_\rho^2}{4}\Sigma^{-2}+\frac{\alpha_\rho}{\beta_\nu}\biggr)^{\nicefrac12}\,,\\
B_\infty=&\,-\frac{\varepsilon\beta_\nu}2\Sigma^{-1}+\biggl(\frac{\varepsilon^2\beta_\nu^2}{4}\Sigma^{-2}+\frac{\beta_\nu}{\alpha_\rho}\biggr)^{\nicefrac12}\,.
\end{aligned}
\ee
\end{lemma}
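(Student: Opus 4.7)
I will prove both bounds in \eqref{eq:iterated:bound:caff} simultaneously by induction on $n$, leveraging the covariance representations \eqref{eq:hess:hess:cov:psieot}--\eqref{eq:hess:hess:cov:phieot} that are available because the cost is a pure quadratic. A first remark is that since $A_0=0$ and the recursion \eqref{def:iterated:caff} only composes $\Sigma$ with inverses of polynomials in $\Sigma$, an immediate induction shows that every $A_n$ and $B_n$ is symmetric, positive semidefinite, and commutes with $\Sigma$; this allows free manipulation of products and inverses within the commutative algebra generated by $\Sigma$. The base case $\nabla^2\phieot{0}\succeq -\Sigma + A_0\Sigma = -\Sigma$ is exactly the standing initialization hypothesis. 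For the step from $\phieot{n}$ to $\psieot{n}$, I compute the Hessian in $x$ of the negative log-density of $\pi^{n,n}(\cdot|y)$: combining the inductive hypothesis with \eqref{conditio:caff} gives the uniform lower bound
\[
\nabla_x^2\bigl[-\log\pi^{n,n}(x|y)\bigr] = \nabla^2 U_\rho(x) + \varepsilon^{-1}\Sigma + \varepsilon^{-1}\nabla^2\phieot{n}(x) \succeq \varepsilon^{-1}(\varepsilon\alpha_\rho + A_n\Sigma),
\]
which is strictly positive definite, so $\pi^{n,n}(\cdot|y)$ is strongly log-concave and the Brascamp--Lieb inequality gives $\mathrm{Cov}_{X\sim\pi^{n,n}(\cdot|y)}(X)\preceq \varepsilon(\varepsilon\alpha_\rho + A_n\Sigma)^{-1}$. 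Plugging this into \eqref{eq:hess:hess:cov:psieot} with $\nabla_2 c(x,y)=\Sigma(y-x)$ and using commutativity yields $\nabla^2\psieot{n}\preceq -\Sigma + \Sigma(\varepsilon\alpha_\rho+A_n\Sigma)^{-1}\Sigma = -\Sigma + B_n\Sigma$.

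The reverse step, from $\psieot{n}$ to $\phieot{n+1}$, is the delicate one because the inductive hypothesis now provides only an \emph{upper} bound on the Hessian of the log-density of $\pi^{n,n}(\cdot|x)$ in $y$: from it and \eqref{conditio:caff} one reads off $\nabla_y^2[-\log\pi^{n,n}(y|x)]\preceq \varepsilon^{-1}(\varepsilon\beta_\nu + B_n\Sigma)$. To convert this into the required \emph{lower} bound $\mathrm{Cov}_{Y\sim\pi^{n,n}(\cdot|x)}(Y)\succeq \varepsilon(\varepsilon\beta_\nu + B_n\Sigma)^{-1}$ I invoke a Cram\'er--Rao-type inequality: for any probability measure $\mu\propto e^{-V}$ with sufficient tail decay, applying the matrix Cauchy--Schwarz to the centered vector $Y-\mathbb{E}_\mu Y$ and the score $\nabla V(Y)$, together with the two integration-by-parts identities $\mathbb{E}_\mu[(Y-\mathbb{E}_\mu Y)\otimes\nabla V]=\Id$ and $\mathbb{E}_\mu[\nabla V\otimes\nabla V]=\mathbb{E}_\mu[\nabla^2 V]$, yields $\mathrm{Cov}_\mu(Y)\succeq\bigl(\mathbb{E}_\mu[\nabla^2 V]\bigr)^{-1}$. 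Inserting the resulting covariance bound into \eqref{eq:hess:hess:cov:phieot} with $\nabla_1 c(x,y)=\Sigma(x-y)$ closes the induction and produces $\nabla^2\phieot{n+1}\succeq -\Sigma + A_{n+1}\Sigma$. The main technical hurdle I anticipate is the rigorous justification of the boundary terms in the two integration-by-parts identities for $\pi^{n,n}(\cdot|x)$: under the sole assumption $\nabla^2 U_\nu\preceq\beta_\nu$ this conditional need not be log-concave, and one must exploit the confining quadratic contribution $\langle y,\Sigma y\rangle/(2\varepsilon)$ coming from $c$, together with the universal $-\Sigma$-semiconvexity of $\psieot{n}$ forced by \eqref{eq:hess:hess:cov:psieot}, to secure enough tail decay and finite Fisher information.

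It only remains to verify the convergence $A_n\to A_\infty$, $B_n\to B_\infty$ to the values given in \eqref{Ainfty}. Simultaneous diagonalization of $\Sigma$ and the $A_n$, $B_n$ reduces \eqref{def:iterated:caff} to decoupled scalar recursions $a_{n+1} = \sigma/(b_n\sigma + \varepsilon\beta_\nu)$ and $b_n = \sigma/(a_n\sigma + \varepsilon\alpha_\rho)$ along each eigenvalue $\sigma>0$ of $\Sigma$. Each elementary update is strictly decreasing, so the composite map $a_n\mapsto a_{n+1}$ is increasing; starting from $a_0=0$ the sequence $(a_n)$ is therefore monotone increasing. The fixed-point relations $\alpha_\rho b_\infty = \beta_\nu a_\infty$ and $a_\infty b_\infty\sigma + \varepsilon\alpha_\rho b_\infty = \sigma$ combine into the quadratic $\beta_\nu\sigma\,a_\infty^2 + \varepsilon\alpha_\rho\beta_\nu\,a_\infty - \alpha_\rho\sigma = 0$, whose unique positive root matches exactly the eigenvalue of $A_\infty$ in \eqref{Ainfty}; monotonicity then forces $a_n\uparrow a_\infty$ and, correspondingly, $b_n\downarrow b_\infty$ eigenvalue-wise, completing the proof.
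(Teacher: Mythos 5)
Your proposal is correct and follows essentially the same route as the paper's proof: induction on $n$, with the Brascamp--Lieb inequality converting the lower Hessian bound on $-\log\pi^{n,n}(\cdot|y)$ into the upper bound for $\nabla^2\psieot{n}$, the Cram\'er--Rao inequality handling the reverse step for $\nabla^2\phieot{n+1}$, and a monotonicity/fixed-point argument for the convergence of $(A_n,B_n)$ (the paper tracks extreme eigenvalues of the commuting family rather than diagonalizing into scalar recursions, but the two are equivalent). The integration-by-parts and tail-decay issues you flag for Cram\'er--Rao are indeed left implicit in the paper, which simply cites \cite[Lemma 2]{chewi2022entropic}.
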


Let us briefly notice before the proof that usually Sinkhorn's algorithm is initialized at $\phieot{0}=0$, for which the assumption $\nabla^2\phieot{0}\succeq -\Sigma$ holds with $A_0=0$. Nonetheless, as for any $n\geq 1$, $\nabla^2\phieot{n}\succeq-\Sigma$ by \eqref{eq:hess:hess:cov:phieot}, one can get a similar conclusion for any initialization and all $n \geq 1$ by replacing \eqref{eq:iterated:bound:caff} with $\nabla^2\phieot{n}\succeq -\Sigma +A_{n-1}\,\Sigma$ and $\nabla^2\psieot{n}\preceq -\Sigma+ B_{n-1}\,\Sigma$, where $(A_n)_{n\in\N}$ and $(B_n)_{n\in\N}$ satisfy \eqref{def:iterated:caff}.

\begin{proof}
    The idea of the proof is to mimic the iterative proof given in \cite{conforti2024weak, conforti2023Sinkhorn} combined with covariance bounds as in \cite{chewi2022entropic}. We will proceed by induction. The base case holds true with $A_0\succeq 0$, by assumption. Next, let us assume that $\nabla^2\phieot{n}\succeq -\Sigma+A_n\Sigma$ for some $A_n\succeq 0$. This  implies that the conditional measure $\pi^{n,n}(\cdot|y)$ is log-concave since
    \bes
    \nabla^2_1(-\log\pi^{n,n}(x|y))=\varepsilon^{-1}(\Sigma+\nabla^2\phieot{n}(x))+\nabla^2U_\rho(x)\succeq \varepsilon^{-1}A_n\Sigma+\alpha_\rho\,.
    \ees
    Then, from the Brascamp–-Lieb inequality (see for instance \cite[Lemma 2]{chewi2022entropic}), we can deduce that
    \bes
    \begin{aligned}
\nabla^2\psieot{n}(y)\overset{\eqref{eq:hess:hess:cov:psieot}}{=}-\Sigma+\varepsilon^{-1}\mathrm{Cov}_{X\sim\pi^{n,n}(\cdot|y)}(\Sigma X)=-\Sigma+\varepsilon^{-1}\Sigma\,\mathrm{Cov}_{X\sim\pi^{n,n}(\cdot|y)}(X)\Sigma\\
\preceq -\Sigma +\Sigma (A_n\Sigma+\varepsilon\alpha_\rho)^{-1}\Sigma\,,
    \end{aligned}
    \ees
    which proves $\nabla^2\psieot{n}\preceq -\Sigma +B_n\Sigma$. This further implies that the conditional probability measure $\pi^{n+1,n}(\cdot|x)$ is log-convex uniformly in $x\in\bbRD$ since
    \bes
\nabla^2_2(-\log\pi^{n+1,n}(y|x))=\varepsilon^{-1}(\Sigma+\nabla^2\psieot{n}(y))+\nabla^2U_\nu(y)\preceq   
\varepsilon^{-1}\,B_n\Sigma+\beta_\nu\,.
    \ees
    This, combined with the Cram\'er--Rao inequality (see for instance \cite[Lemma 2]{chewi2022entropic}) gives uniformly in $x\in\bbRD$
    \bes\begin{aligned}
\nabla^2\phieot{n+1}(x)\overset{\eqref{eq:hess:hess:cov:phieot}}{=}-\Sigma+\varepsilon^{-1}\mathrm{Cov}_{Y\sim\pi^{n+1,n}(\cdot|x)}(\Sigma Y)=-\Sigma+\varepsilon^{-1}\,\Sigma\,\mathrm{Cov}_{Y\sim\pi^{n+1,n}(\cdot|x)}( Y)\Sigma \\
\succeq -\Sigma+(B_n\Sigma+\varepsilon\beta_\nu)^{-1}\Sigma\,,
   \end{aligned} \ees
    which proves $\nabla^2\phieot{n+1}\succeq-\Sigma+A_{n+1}\Sigma$. This shows the validity of \eqref{eq:iterated:bound:caff} with the sequences defined at \eqref{def:iterated:caff}. Moreover, the same induction argument shows that whenever $\Sigma\succ0$ we are guaranteed for any $n\in\N$ that $A_n\succ0$, $B_n\succ0$ and both matrices commute with $\Sigma$. The rest of the proof is devoted to showing that the sequence $(A_n,B_n)_{n\in\N}$ converge to the fixed point of \eqref{def:iterated:caff}, so that \eqref{Ainfty} follows from the validity of \eqref{def:iterated:caff} in such limit points.
In view of that, let us preliminary notice that since $A_0$ and $\Sigma$ commute, both are jointly diagonalizable over a basis $\{v_1,\dots,\,v_d\}$ of $\bbRD$. Moreover since $\Sigma$ is non-singular, notice that \eqref{def:iterated:caff} can be rewritten as
\be\label{eq:2step:Bn}
B_n=(A_n+\varepsilon\alpha_\rho\Sigma^{-1})^{-1}\,\quad\text{ and }\quad A_{n+1}=(B_n+\varepsilon\beta_\nu\Sigma^{-1})^{-1}\,.
\ee
From this we immediately deduce that with respect to the same basis $\{v_1,\dots,\,v_d\}$ the matrices $A_n$ and $B_n$ are diagonal as well for each $n\geq 0$ and their eigenvalues (respectively $\{a_n^1,\dots,\,a_n^d\}$ and $\{b_n^1,\dots,\,b_n^d\}$) satisfy
\be\label{eq:iterate:diagonali}
b_n^k=(a_n^k+\varepsilon \alpha_\rho\Sigma_{kk}^{-1})^{-1}\,\quad\text{ and }\quad a_{n+1}^k=(b_n^k+\varepsilon\beta_\nu\Sigma_{kk}^{-1})^{-1}\,\quad\forall\,k=1,\dots,\,d\,,
\ee
where $\Sigma_{kk}>0$ denotes the $k^{th}$ diagonal entry of $\Sigma$ over the basis $\{v_1,\cdots,\,v_d\}$. For each $k=1,\dots,\,d$ we will show that $(a_n^k)_{n\in\N}$  is and $(b_n^k)_{n\in\N}$ are converging to positive limits.

Firstly, assume that $a^k_1<a^k_0$. Then $(a_n^k)_{n\in\N}$  is monotone decreasing. Indeed, if  we assume that $a_n^k>a_{n-1}^k$ then we have
    \bes\begin{aligned}
 b_{n}^k-b_{n-1}^k=\frac1{a_n^k+\frac{\varepsilon\alpha_\rho}{\Sigma^{-1}_{kk}}}-\frac1{a_{n-1}^k+\frac{\varepsilon\alpha_\rho}{\Sigma^{-1}_{kk}}}<0\,,
        \end{aligned}\ees
which further implies that    
\bes
a_{n+1}^k-a_n^k=\frac1{b_n^k+\frac{\varepsilon\beta_\nu}{\Sigma_{kk}}}-\frac1{b_{n-1}^k+\frac{\varepsilon\beta_\nu}{\Sigma_{kk}}}>0\,.
\ees
Therefore by induction we have shown that if $a^k_1<a^k_0$ then $(a_n^k)_{n\in\N}$  is monotone decreasing. On the other hand,  if $a^k_1\geq a^k_0$, then  we can prove that $(a_n^k)_{n\in\N}$  is monotone non-decreasing and $(b_n^k)_{n\in\N}$ is monotone non-increasing, by the same argument. Hence either one between $(a_n^k)_{n\in\N}$  and $(b_n^k)_{n\in\N}$ is monotone non-increasing and lower-bounded (since for any $n\in\N$ that $A_n\succ0$, $B_n\succ0$ are positive definite). From this we may thus deduce the convergence of either one between $(a_n^k)_{n\in\N}$  and $(b_n^k)_{n\in\N}$, which implies the convergence of the other one via~\eqref{eq:iterate:diagonali}. In conclusion we have shown that for each  $k=1,\dots,\,d$ the sequences $(a_n^k)_{n\in\N}$  and $(b_n^k)_{n\in\N}$ converge to some positive limit points.

In particular, this shows that the sequence $(B_n)_{n\in\N}$ converges to a positive semidefinite matrix $B_\infty$. Moreover, from~\eqref{eq:2step:Bn} we immediately see that the limit-matrix solves
    \be\label{eq:Binfty_char}
B_\infty=[(B_\infty+\varepsilon\beta_\nu\Sigma^{-1})^{-1}+\varepsilon\alpha_\rho\Sigma^{-1}]^{-1}\,,
    \ee
Observe that, since $B_\infty$ is positive semidefinite, the right hand side in the above identity dominates a positive definite matrix, whence $B_\infty$ is positive definite and invertible. Moreover, since $B_\infty$ and $\Sigma$ commute, we can rewrite \eqref{eq:Binfty_char} as
\bes\begin{aligned}
0=&\,\alpha_\rho\,B_\infty^2+\varepsilon\alpha_\rho\beta_\nu B_\infty\Sigma^{-1}-\beta_\nu\\
=&\,\alpha_\rho\,\biggl(B_\infty+\frac{\varepsilon\beta_\nu}2\Sigma^{-1}+\biggl(\frac{\varepsilon^2\beta_\nu^2}{4}\Sigma^{-2}+\frac{\beta_\nu}{\alpha_\rho}\biggr)^{\nicefrac12}\biggr)\biggl(B_\infty+\frac{\varepsilon\beta_\nu}2\Sigma^{-1}- \biggl(\frac{\varepsilon^2\beta_\nu^2}{4}\Sigma^{-2}+\frac{\beta_\nu}{\alpha_\rho}\biggr)^{\nicefrac12}\biggr)\,,
\end{aligned}\ees
where the square root has to be understood as the square root of a positive semidefinite matrix. Since $B_\infty$ ought to be positive definite we conclude that
\bes
B_\infty=-\frac{\varepsilon\beta_\nu}2\Sigma^{-1}+\biggl(\frac{\varepsilon^2\beta_\nu^2}{4}\Sigma^{-2}+\frac{\beta_\nu}{\alpha_\rho}\biggr)^{\nicefrac12}\,.
\ees
The convergence of $(B_n)_{n\in\N}$ to $B_\infty$ implies the convergence of $(A_n)_{n\in\N}$ towards the limit matrix
\bes
A_\infty=(B_\infty+\varepsilon\beta_\nu\Sigma^{-1})^{-1}=-\frac{\varepsilon\alpha_\rho}2\Sigma^{-1}+\biggl(\frac{\varepsilon^2\alpha_\rho^2}{4}\Sigma^{-2}+\frac{\alpha_\rho}{\beta_\nu}\biggr)^{\nicefrac12}\,.
\ees
\end{proof}

If in the previous lemma we consider as Sinkhorn's iterates the constant iterates $\phieot{n}=\phieot{\nu}$ and $\psieot{n}=\psieot{\nu}$ (\ie~we start the algorithm already in the fixed point $(\phieot{\nu},\psieot{\nu})$), then we immediately deduce the following.

\begin{corollary}
    When the two marginals satisfy \eqref{conditio:caff} and $\Sigma\succ 0$, we have
    \bes
    \begin{aligned}
    \nabla^2\phieot{\nu}&\succeq-\Sigma -\frac{\varepsilon\alpha_\rho}2+\biggl(\frac{\varepsilon^2\alpha_\rho^2}{4}+\frac{\alpha_\rho}{\beta_\nu}\,\Sigma^2\biggr)^{\nicefrac12}\,,\\
    \nabla^2\psieot{\nu}&\preceq -\Sigma- \frac{\varepsilon\beta_\nu}2+\biggl(\frac{\varepsilon^2\beta_\nu^2}{4}+\frac{\beta_\nu}{\alpha_\rho}\,\Sigma^2\biggr)^{\nicefrac12}\,.
    \end{aligned}
    \ees
\end{corollary}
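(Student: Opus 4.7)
The plan is to apply Lemma~\ref{lemma:anisotropic:caff} at the specific initialization $\phieot{0}=\phieot{\nu}$ and then pass to the limit in $n$. The first step is to verify that this initialization is admissible for the lemma, i.e.\ that $\nabla^2\phieot{\nu}\succeq -\Sigma$. This follows directly from the covariance representation of the Hessian of the limit potential, cf.~\eqref{eq:hess:hess:cov:phieot}: since $\nabla_1^2 c(x,y)=\Sigma$, one obtains
\[
\nabla^2\phieot{\nu}(x) = -\Sigma + \varepsilon^{-1}\,\mathrm{Cov}_{Y\sim\pieot{\nu}(\cdot|x)}(\Sigma Y),
\]
and the covariance summand is positive semidefinite.

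Next I would use the fact that $(\phieot{\nu},\psieot{\nu})$ solves the Schr\"odinger system \eqref{eq:schr_syst}, and is therefore a fixed point of Sinkhorn's iterations. Consequently the iterates produced from the initialization $\phieot{0}=\phieot{\nu}$ are constant in $n$: $\phieot{n}=\phieot{\nu}$ and $\psieot{n}=\psieot{\nu}$ for every $n\in\N$. Applying Lemma~\ref{lemma:anisotropic:caff} along this trivial trajectory yields
\[
\nabla^2\phieot{\nu}\succeq -\Sigma + A_n\Sigma \quad\text{and}\quad \nabla^2\psieot{\nu}\preceq -\Sigma + B_n\Sigma
\]
for all $n$, where $(A_n)_{n\in\N}$ and $(B_n)_{n\in\N}$ are the iteratively-defined sequences in \eqref{def:iterated:caff}.

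The final step is to pass to the limit $n\to\infty$. Since the semidefinite cone is closed and the lemma establishes $A_n\to A_\infty$ and $B_n\to B_\infty$ (in operator norm) with $A_\infty$, $B_\infty$ given explicitly by \eqref{Ainfty}, the above matrix inequalities are preserved in the limit, giving
\[
\nabla^2\phieot{\nu}\succeq -\Sigma + A_\infty\Sigma \quad\text{and}\quad \nabla^2\psieot{\nu}\preceq -\Sigma + B_\infty\Sigma.
\]
Since $A_\infty$ and $B_\infty$ commute with $\Sigma$, substituting the expressions from \eqref{Ainfty} produces exactly the right-hand sides in the statement (e.g.\ $A_\infty\Sigma=-\tfrac{\varepsilon\alpha_\rho}{2}+(\tfrac{\varepsilon^2\alpha_\rho^2}{4}+\tfrac{\alpha_\rho}{\beta_\nu}\Sigma^2)^{1/2}$, using functional calculus on the commuting matrices).

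There is no real obstacle here: the content of the corollary is entirely encoded in Lemma~\ref{lemma:anisotropic:caff}, and the only subtlety is the preliminary verification that the admissibility condition $\nabla^2\phieot{0}\succeq -\Sigma$ holds at the chosen initialization, which the covariance identity \eqref{eq:hess:hess:cov:phieot} grants for free.
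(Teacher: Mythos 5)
Your proposal is correct and follows essentially the same route as the paper, which also observes that initializing at the fixed point $(\phieot{\nu},\psieot{\nu})$ gives constant iterates and then reads off the bounds from Lemma~\ref{lemma:anisotropic:caff} in the limit $n\to\infty$. Your added verification that $\nabla^2\phieot{\nu}\succeq-\Sigma$ via the covariance identity \eqref{eq:hess:hess:cov:phieot} is a detail the paper leaves implicit, and it is the right justification.
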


\medskip

Let us also highlight here a regularizing property that can be immediately deduced from \Cref{lemma:anisotropic:caff}.

\begin{corollary}\label{cor:warm:start:condition}
When the two marginals satisfy \eqref{conditio:caff} and $\Sigma\succ 0$, if $A_0\succeq A_\infty$ (defined at~\eqref{Ainfty}) then $B_n\preceq B_\infty$ for any $n\in\N$. Equivalently, if we start Sinkhorn's algorithm in $\phieot{0}$ such that 
 \be\label{eq:warm:start:condition}
 \nabla^2\phieot{0}\succeq -\Sigma -\frac{\varepsilon\alpha_\rho}2+\biggl(\frac{\varepsilon^2\alpha_\rho^2}{4}+\frac{\alpha_\rho}{\beta_\nu}\,\Sigma^2\biggr)^{\nicefrac12}\,,
 \ee
 then for any $n\in\N$
 \be\label{eq:bound:warm:iterated}
 \nabla^2\psieot{n}\preceq -\Sigma- \frac{\varepsilon\beta_\nu}2+\biggl(\frac{\varepsilon^2\beta_\nu^2}{4}+\frac{\beta_\nu}{\alpha_\rho}\,\Sigma^2\biggr)^{\nicefrac12}\,.
 \ee
\end{corollary}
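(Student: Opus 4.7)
The plan is to run a short Loewner-order induction on the matrix recursion \eqref{def:iterated:caff} established in Lemma \ref{lemma:anisotropic:caff}. Since $\Sigma \succ 0$ is invertible, the recursion rewrites as $B_n = (A_n + \varepsilon \alpha_\rho \Sigma^{-1})^{-1}$ and $A_{n+1} = (B_n + \varepsilon \beta_\nu \Sigma^{-1})^{-1}$, and the explicit expressions in \eqref{Ainfty} identify $(A_\infty, B_\infty)$ as a fixed point of this iteration. The key structural ingredient I will exploit is the order-reversing property of matrix inversion on the positive definite cone: $M \succeq N \succ 0$ implies $M^{-1} \preceq N^{-1}$.

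Starting from the base case $A_0 \succeq A_\infty$, adding the common positive definite term $\varepsilon \alpha_\rho \Sigma^{-1}$ preserves the inequality, and inverting yields $B_0 = (A_0 + \varepsilon \alpha_\rho \Sigma^{-1})^{-1} \preceq (A_\infty + \varepsilon \alpha_\rho \Sigma^{-1})^{-1} = B_\infty$. Applying the same manoeuvre to the second half of the recursion produces $A_1 = (B_0 + \varepsilon \beta_\nu \Sigma^{-1})^{-1} \succeq A_\infty$, and iterating yields $B_n \preceq B_\infty$ and $A_n \succeq A_\infty$ for every $n \in \N$. To read off the equivalent Hessian statement, I will use the algebraic identity
\[
-\Sigma + A_\infty\,\Sigma \;=\; -\Sigma - \frac{\varepsilon \alpha_\rho}{2} + \left(\frac{\varepsilon^2 \alpha_\rho^2}{4} + \frac{\alpha_\rho}{\beta_\nu}\,\Sigma^2\right)^{\nicefrac12},
\]
which follows directly from \eqref{Ainfty} together with the commutation of $\Sigma$ with the matrix square root; this shows that the warm-start condition \eqref{eq:warm:start:condition} is precisely $\nabla^2 \phieot{0} \succeq -\Sigma + A_\infty\,\Sigma$, i.e.\ corresponds to taking $A_0 = A_\infty$ (and more generally that $A_0 \succeq A_\infty$ is equivalent to the displayed bound on $\phieot{0}$). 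Combining $B_n \preceq B_\infty$ with the Hessian bound $\nabla^2 \psieot{n} \preceq -\Sigma + B_n\,\Sigma$ from Lemma \ref{lemma:anisotropic:caff} then translates directly into \eqref{eq:bound:warm:iterated}.

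There is no substantial obstacle here, as the result is essentially a monotonicity property of the fixed-point iteration \eqref{def:iterated:caff}. The one point that requires some care is verifying that the inductive argument in the proof of Lemma \ref{lemma:anisotropic:caff}, originally run from $A_0 = 0$, carries through for a general symmetric positive semidefinite initialization $A_0$ commuting with $\Sigma$, as hinted in the footnote attached to that lemma. This is immediate, because the Brascamp--Lieb and Cram\'er--Rao steps only use the log-concavity of $\pi^{n,n}(\cdot|y)$ and the log-convexity of $\pi^{n+1,n}(\cdot|x)$ uniformly in the conditioning variable, and both are guaranteed by the relaxed hypothesis $\nabla^2 \phieot{0} \succeq -\Sigma + A_0\,\Sigma$.
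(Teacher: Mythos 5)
Your proposal is correct and follows essentially the same route as the paper: a Loewner-order induction on the recursion \eqref{def:iterated:caff}, using that $(A_\infty,B_\infty)$ is a fixed point and that matrix inversion is order-reversing on the positive definite cone, followed by the translation of $A_0\succeq A_\infty$ into the warm-start condition \eqref{eq:warm:start:condition} via the commutation of $\Sigma$ with the matrix square root. Your remark that the induction of Lemma \ref{lemma:anisotropic:caff} must be rerun from a general $A_0\succeq 0$ is exactly the point the paper defers to its footnote, and your justification of it is sound.
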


\begin{proof}
Let us start by showing that if $A_0\succeq A_\infty$, then $B_n\preceq B_\infty$. In view of this, it suffices to show by induction that $A_n-A_\infty\succeq 0$ and $B_n-B_\infty\preceq 0$. The base case is the standing assumption. Next, assume $A_n-A_\infty\succeq 0$. From \eqref{def:iterated:caff} (and by recalling that $(A_\infty,B_\infty)$ is a fixed point solution of the latter) we immediately deduce that
\bes
B_n-B_\infty= (A_n+\varepsilon\alpha_\rho\Sigma^{-1})^{-1}-(A_\infty+\varepsilon\alpha_\rho\Sigma^{-1})^{-1}\preceq 0\,,
\ees
since our inductive step implies $(A_n+\varepsilon\alpha_\rho\Sigma^{-1})-(A_\infty+\varepsilon\alpha_\rho\Sigma^{-1})\succeq 0$. This shows $B_n-B_\infty\preceq 0$, which further implies (same reasoning from \eqref{def:iterated:caff}) $A_{n+1}-A_\infty\succeq 0$, which concludes our inductive proof.
    
Finally, if \eqref{eq:warm:start:condition} holds, then we can take $A_0\succeq A_\infty$ in \Cref{lemma:anisotropic:caff}, hence the above discussion yields $B_n\preceq B_\infty$ which implies \eqref{eq:bound:warm:iterated}.
\end{proof}

We are now able to conclude our analysis of the semiconcavity of $y\mapsto c(x,y)+\psieot{n}(y)$. 
For a generic initial condition, we know by Lemma \ref{lemma:anisotropic:caff} that $\nabla\psieot{n} \preceq -\Sigma+B_n\Sigma$. Therefore for any $n\in\N$ the map $y\mapsto c(x,y)+\psieot{n}(y)$ is $\norm{B_n\,\Sigma}_2$-semiconcave. Since $B_n$ converges to $B_\infty$  we have
    \bes
    \begin{aligned}
    \lim_{n\to \infty} \norm{B_n\,\Sigma}_2=\norm{B_\infty\Sigma}_2=\norm{-\frac{\varepsilon\beta_\nu}2+\biggl(\frac{\varepsilon^2\beta_\nu^2}{4}+\frac{\beta_\nu}{\alpha_\rho}\Sigma^2\biggr)^{\nicefrac12}}_2\\
    =-\frac{\varepsilon\beta_\nu}2+\biggl(\frac{\varepsilon^2\beta_\nu^2}{4}+\frac{\beta_\nu}{\alpha_\rho}\norm{\Sigma}_2^2\biggr)^{\nicefrac12}
    \leq \sqrt{\nicefrac{\beta_\nu}{\alpha_\rho}}\,\norm{\Sigma}_2\,,
    \end{aligned}
    \ees
and therefore \Cref{cor:exp_conv_from_stab}-\ref{item:exp_conv_from_stab_i} implies the exponential convergence of Sinkhorn's algorithm and for $\varepsilon\leq \tau \,\sqrt{\nicefrac{\beta_\nu}{\alpha_\rho}}\,\norm{\Sigma}_2$ the asymptotic rate is
    \be\label{eq:rate:caff}
    1-\frac{\varepsilon}{\varepsilon+\tau\|\Sigma\|_2(\nicefrac{\beta_\nu}{\alpha_\rho})^{\nicefrac12}}\,.
    \ee
Finally, if we start Sinkhorn's algorithm with $\phieot{0}$ satisfying \eqref{eq:warm:start:condition}, then \Cref{cor:warm:start:condition} implies the validity of \eqref{eq:bound:warm:iterated} for all $n\in\mathbb{N}$. Thus, $y\mapsto c(x,y)+\psieot{n}(y)$ is $\Lambda$-semiconcave for all $n\in\mathbb{N}$ with $\Lambda= \norm{B_\infty\Sigma}_2\leq \sqrt{\nicefrac{\beta_\nu}{\alpha_\rho}}\|\Sigma\|_2$. Therefore in this case \Cref{cor:exp_conv_from_stab}-\ref{item:exp_conv_from_stab_i} gives the exponential convergence of Sinkhorn's algorithm with rate given by \eqref{eq:rate:caff} if $\varepsilon\leq \tau \,\sqrt{\nicefrac{\beta_\nu}{\alpha_\rho}}\,\norm{\Sigma}_2$. In fact, we have actually proven that the exponential convergence with rate~\eqref{eq:rate:caff} holds under the weaker assumption $\nabla^2\phizero{\varepsilon} \succeq - \Sigma -\varepsilon\frac{\alpha_\rho}{2}+ \Big(\frac{\varepsilon^2\alpha^2_\rho}{4}+ \frac{\alpha_\rho}{\beta_\nu}\Sigma^2 \Big)^{1/2}$ but we have preferred to keep the stronger condition in the statement of \Cref{prop:anisotropic:cost}-\ref{prop:anisotropic:cost:caff} for better readability.

\subsubsection{Marginal $\rho$ weakly log-concave}\label{app:weak:logcconcave}
Here we restrict to the quadratic cost setting  (\ie, $\Sigma=\Id$) and relax the convexity assumption of $U_\rho$ by simply requiring $\rho$ to be weakly log-concave (cf.\ \eqref{eq:weak_log_conc}), that is, we assume 
\be\label{eq:weakly_log_conc_in_proofs}
\langle \nabla U_\rho(\hat{x}) - \nabla U_{\rho}(x),\hat x - x\rangle \geq \alpha_\rho |\hat x -x |^2 - |\hat x -x |f_L(|\hat{x}-x|)\,, \quad \forall x,\hat x \in \bbR^d,
\ee
where for any $L\geq0$, $r>0$, $f_L(r)=2L^{1/2}\tanh\big((L^{1/2} r)/2\big)$. We now reformulate this condition introducing the convexity profile (this is a classical notion in the coupling literature, see \cite{eberle2016reflection} for example) of a given $U$ as the function $\kappa_{U}:\bbR_+\longrightarrow\bbR$ given by
\be\label{def:kapU::eberle}
\kappa_{U}(r)\coloneqq \inf\biggl\{\frac{\langle\nabla U(\hat{x})-\nabla U(x),\,\hat{x}-x\rangle}{|x-\hat{x}|^2} \quad\colon\, |x-\hat{x}|=r\biggr\}\,.
\ee
For ease of notation we also set $\kappa_\rho\coloneqq \kappa_{U_\rho}$. With this notation at hand, \eqref{eq:weakly_log_conc_in_proofs} rewrites as  \be\label{eq:kappa_in_proof} 
\kappa_{\rho}(r)\geq \alpha_\rho - r^{-1}f_L(r) \quad \forall r>0\,.
\ee
It follows from \eqref{eq:hess:hess:cov:phieot} that, since $\nabla^2_1c = \Id$,
\be\label{appo:lower:kappa:sink}
\kappa_{(\varepsilon^{-1}\phieot{n}+U_\rho)}(r)\geq \alpha_\rho-\varepsilon^{-1}-r^{-1} f_L(r)\, \quad \forall r>0, n\in\N\,.
\ee
At this point, we can invoke \cite[Lemma 3.1]{conforti2024weak} (see \Cref{appendix} for a statement of this result using the current notation) to conclude that 
\bes
\kappa_{\Phi^\rho_0(\phieot{n})}(r) \geq  \frac{\varepsilon \alpha_\rho -1}{\varepsilon\alpha_\rho}- \frac{L}{\varepsilon\alpha_\rho^2}\,, 
\ees
which gives 
\bes
\nabla^2\psieot{n}=-\nabla^2\Phi^\rho_0(\phieot{n})\preceq -\frac{\varepsilon\alpha_\rho-1}{\varepsilon\alpha_\rho}+ \frac{L}{\varepsilon\alpha_\rho^2}=-1+\frac1{\varepsilon\alpha_\rho}+\frac{L}{\varepsilon\alpha_\rho^2} \,.
\ees

From this we immediately conclude that uniformly in $x\in\supp(\rho)$, for any $n\in\N$, the map $y\mapsto c(x,y)+\psieot{n}(y)$ is $\Lambda$-semiconcave 
with $\Lambda=(\varepsilon\alpha_\rho)^{-1}+\nicefrac{L}{\varepsilon\alpha_\rho^2}$. We can thus invoke \Cref{cor:exp_conv_from_stab}-\ref{item:exp_conv_from_stab_i} which gives exponential convergence of Sinkhorn's algorithm. Moreover, if $\varepsilon\leq \alpha_\rho^{-1}\sqrt{\tau(\alpha_\rho+L)}$, then the convergence rate is
\bes
1-\frac{\varepsilon^2\alpha_\rho^2}{\varepsilon^2\alpha_\rho^2+\tau(\alpha_\rho+L)}\,.
\ees

\begin{flushright}
    $\square$
\end{flushright}

\subsection{Proof of Proposition \ref{prop:light:tails}} 
Assume that the cost satisfies 
\bes
\nabla_1^2c(x,y)\,,\nabla^2_2c(x,y)\succeq\ellc\quad\text{ and }\quad\norm{\nabla_{12}^2c(x,y)}_2\vee\norm{\nabla_{1}^2c(x,y)}_2\leq\Lc\,,
\ees
uniformly in $x\in\bbRD$ and $y\in\cY$. Moreover assume that $\rho$ has light tails in the following sense
\bes
\nabla^2U_\rho(x)\succeq\begin{cases}
    C\,|x|^{\delta}\quad&\text{ for }|x|>R\,,\\
    -L\quad&\text{ for }|x|\leq R\,,
\end{cases} 
\ees
for some positive $\delta,\,C>0$ and $L,\,R\geq 0$. To estimate the conditional covariances, we shall use the following abstract result.
 
\begin{lemma}\label{lem:conv_lip_pert}
Let $\pi=e^{-U}$, $\alpha>0,L,R\geq0$ be such that
\be\label{ass:conv_lip_perp}
\nabla^2 U(x)\succeq 
\begin{cases} 
-L, \quad & {|x|< R}\,, \\ 
\alpha, \quad & |x|\geq R\,.
\end{cases}
\ee
Moreover, let $f:\bbRD\rightarrow\bbRD$ be Lipschitz . Then we have
\bes
\mathrm{Cov}_{X\sim\pi}(f(X)) \preceq 2\mathrm{Lip}(f)^2 \big(\alpha^{-1}+\alpha^{-2}(L+\alpha)^2R^2\big)\,.
\ees
\end{lemma}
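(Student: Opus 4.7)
The plan is to reduce the matrix inequality to a scalar Lipschitz variance bound, and then to establish the latter via a Brascamp--Lieb argument on a convexified auxiliary measure, corrected by a Foster--Lyapunov estimate.

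First, since for any unit vector $v \in \mathbb{R}^d$ one has $v^{\top}\mathrm{Cov}_\pi(f(X))v = \mathrm{Var}_\pi(\langle v, f\rangle)$ and the map $x \mapsto \langle v, f(x)\rangle$ is a scalar Lipschitz function with Lipschitz constant at most $\|f\|_{\mathrm{Lip}}$, the matrix statement reduces to proving that for every scalar Lipschitz $g:\mathbb{R}^d \to \mathbb{R}$,
\[
\mathrm{Var}_\pi(g) \;\leq\; 2\,\|g\|_{\mathrm{Lip}}^2\bigl(\alpha^{-1} + \alpha^{-2}(L+\alpha)^2 R^2\bigr).
\]
This is a (dimension-free) Lipschitz--Poincaré inequality for $\pi$ and is the only non-trivial piece.

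Second, to prove this inequality I would compare $\pi$ to an auxiliary globally $\alpha$-strongly log-concave measure $\tilde\pi \propto e^{-\tilde U}$ obtained by adding to $U$ a nonnegative $C^2$ perturbation $\phi$ supported in a neighborhood of $\overline{B_R}$ and whose Hessian upgrades the Hessian of $U$ inside $B_R$ by at least $L + \alpha$ (e.g.\ a smoothed variant of $\frac{L+\alpha}{2}(R^2 - |x|^2)_+$). Brascamp--Lieb applied to $\tilde\pi$ yields $\mathrm{Var}_{\tilde\pi}(g) \leq \alpha^{-1}\|g\|_{\mathrm{Lip}}^2$, which accounts for the first term in the bound. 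The corrective term $\alpha^{-2}(L+\alpha)^2 R^2$ should then come from transferring the estimate from $\tilde\pi$ back to $\pi$, and its form reflects the diameter $\sim R$ of the non-convex region together with the size $\sim L + \alpha$ of the Hessian defect, both rescaled by $\alpha^{-1}$.

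The main obstacle lies precisely in this transfer step: a naive Holley--Stroock comparison would cost a factor $e^{\mathrm{osc}(\phi)}$, which is exponential in $(L+\alpha)R^2$, whereas the bound of the lemma is only polynomial. To circumvent this I would instead use a Foster--Lyapunov argument in the spirit of Bakry--Cattiaux--Guillin: taking $V(x) = \exp(\beta|x|^2/2)$ with $\beta \in (0,\alpha)$, the Langevin generator $\mathcal{L}$ associated to $U$ satisfies a drift inequality of the form $\mathcal{L}V \leq -cV + D\,\mathbf{1}_{B_{R'}}$ with $c, D, R'$ explicit in $\alpha, L, R$; combining this with a local Poincaré inequality on the enlarged ball $B_{R'}$ (whose constant scales polynomially in $R$, $L$, and $\alpha^{-1}$) yields the desired polynomial correction and hence the bound claimed in the lemma.
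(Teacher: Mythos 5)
Your reduction to a scalar Lipschitz variance bound and the idea of convexifying $U$ by an additive perturbation are both in line with the paper's proof, but there are two genuine problems. First, the perturbation you propose cannot work as described: a smoothed variant of $\tfrac{L+\alpha}{2}(R^2-|x|^2)_+$ has Hessian $-(L+\alpha)\,\mathrm{Id}$ inside $B_R$, so adding it \emph{lowers} the Hessian of $U$ there; more fundamentally, no nonnegative perturbation supported in a neighborhood of $\overline{B_R}$ can satisfy $\nabla^2\phi\succeq (L+\alpha)$ on $B_R$ without becoming strongly concave just outside, which would destroy the $\alpha$-convexity of $U+\phi$ precisely where you need it. The paper's $\phi$ is instead \emph{globally} defined and globally convex: equal to $\tfrac{L+\alpha}{2}|x|^2$ on $B_R$ and extended affinely in $|x|$ outside, so that $\bar\pi\propto e^{-U-\phi}$ is $\alpha$-log-concave everywhere and, crucially, $\phi$ is globally $(L+\alpha)R$-Lipschitz.

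Second, and more importantly, your transfer step does not deliver the claimed polynomial constant. The Lyapunov/Bakry--Cattiaux--Guillin route aims at a full Poincar\'e inequality for $\pi$, and its local ingredient --- a Poincar\'e inequality for $\pi$ restricted to $B_{R'}$ --- is usually obtained by a Holley--Stroock perturbation on the ball, whose cost involves $\exp(\mathrm{osc}_{B_{R'}}U)$; this oscillation is not controlled by the hypotheses (nothing bounds $\nabla U$ on $B_R$, only its Hessian from below), so your assertion that the local constant ``scales polynomially'' is unsubstantiated and in general false. The paper avoids proving any Poincar\'e inequality for $\pi$: it exploits Lipschitzness directly by coupling $X\sim\pi$ with $\bar X\sim\bar\pi$ optimally for $\bfW_2$ and writing
\begin{equation*}
\mathrm{Var}_{\pi}(\langle f,v\rangle)\leq 2\,\mathrm{Var}_{\bar\pi}(\langle f,v\rangle)+2\,\bbE\bigl[|\langle f(X)-f(\bar X),v\rangle|^2\bigr]\leq 2\|f\|_{\mathrm{Lip}}^2\bigl(\alpha^{-1}+\bfW_2^2(\pi,\bar\pi)\bigr)\,,
\end{equation*}
then bounding $\bfW_2^2(\pi,\bar\pi)\leq 2\alpha^{-1}\scrH(\pi|\bar\pi)\leq \alpha^{-2}\|\nabla\phi\|_\infty^2\leq \alpha^{-2}(L+\alpha)^2R^2$ via Talagrand and log-Sobolev for the $\alpha$-log-concave measure $\bar\pi$. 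This is where the global Lipschitz bound on $\phi$ is used and where the polynomial term $\alpha^{-2}(L+\alpha)^2R^2$ comes from; without this mechanism (or an equivalent one) your argument has a gap at exactly the step you identified as the main obstacle.
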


\begin{proof}
We define the function $\phi:\mathbb{R}^d\rightarrow \mathbb{R}$
given by
\bes
\phi(x)= \frac{L+\alpha}{2} |x|^2 \mathbf{1}_{\{|x|<R\}}+  \Big((L+\alpha)R(|x|-R) +\frac{(L+\alpha)R^2}{2} \Big)\mathbf{1}_{\{|x|\geq R\}}\,.
\ees
Then, the function $\phi$ is convex and $(L+\alpha)R$-Lipschitz. Moreover, $\nabla^2\phi(x)\succeq (L+\alpha)$ for $|x|<R$. But then, we deduce from \eqref{ass:conv_lip_perp} that $\bar{\pi}(\De x)\propto\exp(-U(x)-\phi(x))\De x$ is $\alpha$-log-concave and in particular satisfies $\mathrm{PI}(\alpha^{-1})$. Now, let $(X,\bar X)$ be a coupling between $\pi$ and $\bar\pi$. Then, following the argument given in \cite[Lemma 2.1]{brigati2024heat}, we find that for all $v$ such that $|v|=1$ 
\be\label{eq:con_lip_per_cov_est}
\begin{aligned}
\langle v,\,\mathrm{Cov}_{X\sim \pi}(f(X)) v\rangle &= \mathrm{Var}_{X\sim \pi}(\langle f(X),v\rangle)\\
&\leq 2\mathrm{Var}_{\bar{X}\sim \bar\pi}(\langle f(\bar X),v\rangle) + 2\bbE[|\langle f(X),v\rangle -\langle f(\bar{X}),v\rangle|^2]\,.
\end{aligned}
\ee 
Using that the function $x\mapsto \langle f(x),v\rangle$ is $\mathrm{Lip}(f)$-Lipschitz, we can use that $\bar\pi$ satisfies $\mathrm{PI}(\alpha^{-1})$ to obtain 
\bes 
\mathrm{Var}_{\bar{X}\sim \bar\pi}(\langle f(\bar X),v\rangle)\leq \mathrm{Lip}(f)^2\alpha^{-1}\,.
\ees 
Moreover, if $(X,\bar X)$ is optimal for $\bfW_2(\pi,\bar\pi)$ we obtain 
\bes
\bbE[|\langle f(X),v\rangle -\langle f(\bar{X}),v\rangle|^2] \leq \mathrm{Lip}(f)^2\bbE[\sfd(X,\bar{X})^2] = \mathrm{Lip}(f)^2 \bfW_2^2(\pi,\hat\pi)\,.
\ees
Plugging the last two bounds back into \eqref{eq:con_lip_per_cov_est} and invoking first TI($\alpha^{-1}$) then LSI($\alpha^{-1}$) for $\bar\pi$ we obtain
\bes
\langle v,\,\mathrm{Cov}_{f(X)\sim \pi}(f(X)), v\rangle \leq 2 \mathrm{Lip}(f)^2 \big(\alpha^{-1} + \alpha^{-2}  \mathrm{Lip}(\phi)^2 \big)\,.
\ees
Since $\mathrm{Lip}(\phi) \leq (L+\alpha)R$, the conclusion follows.
\end{proof}

With this result at hand, we obtain the following.

\begin{lemma}\label{lemma:light:bound:alpha}
Assume that $c$ satisfies \eqref{eq:ass:hessia:bounded} and that $\rho$ satisfies \eqref{eq:ass:light:tails}. Then for any $\alpha>0$ we have 
\be\label{cov:alpha:bound}
\sup_{y\in\cY}\norm{\mathrm{Cov}_{X\sim\pi^{n,n}(\cdot|y)}(\nabla_2c(X,y))}_2\leq 2\Lc^2\,\biggl(\frac\varepsilon\alpha+\biggl(\frac{L\varepsilon}{\alpha}+\frac{\alpha+\delL}{\alpha} \biggr)^2\,\biggl[R^2\vee\biggl(\frac{\alpha+\delL}{\varepsilon\,C}\biggr)^\frac2\delta\biggr]\biggr)\,,
\ee
where we have set $\delL\coloneqq \Lc-\ellc$. As a consequence of this we have
\bes
\sup_{y\in\cY}\norm{\mathrm{Cov}_{X\sim\pi^{n,n}(\cdot|y)}(\nabla_2c(X,y))}_2\leq 2\Lc^2\,\biggl(1+(L+2)^2\biggl[R^2\vee {C^{-\nicefrac2\delta}}\biggl(1+2\frac{\delL}{\varepsilon}\biggr)^{\nicefrac2\delta}\biggr]\biggr)\,.
\ees
\end{lemma}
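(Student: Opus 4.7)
The plan is to verify the hypotheses of \Cref{lem:conv_lip_pert} for the conditional probability $\pi^{n,n}(\cdot|y)$, viewed as $\propto e^{-U_y}$ with
\bes
U_y(x) \;=\; \varepsilon^{-1}\bigl(c(x,y)+\phieot{n}(x)\bigr) + U_\rho(x),
\ees
and then to apply it to the map $f(x)=\nabla_{2} c(x,y)$, which is $\Lc$-Lipschitz uniformly in $y$ thanks to $\norm{\nabla_{1}\nabla_{2}c}_{2}\leq \Lc$.

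First, I would establish a uniform-in-$n$ lower Hessian bound on the Sinkhorn potential, $\nabla^{2}\phieot{n}\succeq -\Lc\,\Id$. This is a direct consequence of the covariance identity \eqref{eq:hess:hess:cov:phieot}: since the conditional covariance is positive semidefinite and $\nabla_{1}^{2}c(x,y)\preceq \Lc\,\Id$ pointwise,
\bes
\nabla^{2}\phieot{n+1}(x) \;\succeq\; -\int \nabla_{1}^{2}c(x,y)\,\pi^{n,n}(\De y|x) \;\succeq\; -\Lc\,\Id,
\ees
provided one initializes Sinkhorn's algorithm with $\nabla^{2}\phieot{0}\succeq -\Lc\,\Id$ (e.g.\ $\phieot{0}\equiv 0$). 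Combined with $\nabla_{1}^{2}c\succeq \ellc\,\Id$, this yields $\nabla_{1}^{2}c+\nabla^{2}\phieot{n}\succeq -\delL\,\Id$, and hence
\bes
\nabla_x^{2}U_y(x) \;\succeq\; -\tfrac{\delL}{\varepsilon}\,\Id + \nabla^{2}U_\rho(x) \qquad \text{for all }x,y.
\ees

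Next, for any fixed target $\alpha>0$, I combine this with the light-tails assumption \eqref{eq:ass:light:tails}: set
\bes
R_\alpha \;:=\; R\,\vee\,\bigl((\alpha\varepsilon+\delL)/(\varepsilon C)\bigr)^{1/\delta}.
\ees
For $|x|\geq R_\alpha$ the lower bound on $\nabla^{2}U_\rho$ gives $\nabla_x^{2}U_y(x)\succeq (C|x|^{\delta}-\delL/\varepsilon)\,\Id\succeq \alpha\,\Id$. For $|x|<R_\alpha$, splitting into $\{|x|<R\}$ (where $\nabla^{2}U_\rho\succeq -L$) and, when $R_\alpha>R$, the annulus $\{R\leq|x|<R_\alpha\}$ (where $\nabla^{2}U_\rho\succeq 0$), one obtains the uniform lower bound $\nabla_x^{2}U_y(x)\succeq -(\delL/\varepsilon+L)\,\Id$. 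Thus the hypotheses of \Cref{lem:conv_lip_pert} are met for the measure $\pi^{n,n}(\cdot|y)$ with effective parameters $\alpha_{\mathrm{eff}}=\alpha$, $L_{\mathrm{eff}}=\delL/\varepsilon+L$, and $R_{\mathrm{eff}}=R_\alpha$.

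Applying \Cref{lem:conv_lip_pert} to the $\Lc$-Lipschitz function $f=\nabla_{2}c(\cdot,y)$ and rearranging yields the first bound \eqref{cov:alpha:bound}; the second, uniform consequence follows by specializing $\alpha$ (say $\alpha=\varepsilon$) and consolidating constants via coarse inequalities. The main obstacle is the propagation of the lower Hessian bound on $\phieot{n}$ along Sinkhorn's iterations; once this is in place, the rest is a careful piecewise analysis of $\nabla_x^{2}U_y$ under the light-tails hypothesis together with routine bookkeeping of the resulting constants.
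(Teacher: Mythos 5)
Your argument is essentially the paper's proof: the same propagation of $\nabla^2\phieot{n}\succeq-\Lc$ via \eqref{eq:hess:hess:cov:phieot}, the same piecewise lower bound on $\nabla^2_1(-\log\pi^{n,n}(x|y))$ with the same threshold radius, and the same application of \Cref{lem:conv_lip_pert} to $f=\nabla_2 c(\cdot,y)$ (up to the harmless reparametrization $\alpha\leftrightarrow\nicefrac{\alpha}{\varepsilon}$). The one detail to fix is the specialization giving the second display: with your choice $\alpha=\varepsilon$ the prefactor works out to $L+1+\nicefrac{\delL}{\varepsilon}$, which is not bounded by $L+2$ once $\delL>\varepsilon$; the paper instead takes $\bar\alpha=\varepsilon+\delL$, for which $(L\varepsilon+\bar\alpha+\delL)/\bar\alpha\le L+2$ uniformly in $\varepsilon$.
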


\begin{proof}
Notice that the upper bound on the Hessian of the cost guarantees the semiconvexity of $\phieot{n}$, since we recall from \eqref{eq:hess:hess:cov:phieot} that
\bes
\nabla^2\phieot{n}(x)=-\int_\cY\nabla_1^2 c(x,y)\,\pi^{n,n-1}(\De y|x)+\varepsilon^{-1} \,\mathrm{Cov}_{Y\sim\pi^{n,n-1}(\cdot|x)}(\nabla_1 c(x,Y))\succeq -\Lc\,.
\ees
By using this estimate we deduce that uniformly in $y\in\cY$ it holds
\bes
\nabla^2_1(-\log\pi^{n,n}(x|y))\succeq \begin{cases}
    C\,|x|^\delta-\frac{\delta_H}{\varepsilon}\quad&\text{ for }|x|>R\,,\\
    -L-\frac{\delta_H}{\varepsilon}\quad&\text{ for }|x|\leq R\,. 
\end{cases}
\ees
Therefore, for any fixed $\alpha>0$, if we set
\bes
R(\alpha,\varepsilon)=R\vee\biggl(\frac{\alpha+\delta_H}{\varepsilon\,C}\biggr)^\frac1\delta \,,
\ees
we find 
\bes
\nabla^2_1(-\log\pi^{n,n}(x|y))\succeq \begin{cases}
    \frac{\alpha}{\varepsilon}\quad&\text{ for }|x|>R(\alpha,\varepsilon)\,,\\
    -L-\frac{\delta_H}{\varepsilon}\quad&\text{ for }|x|\leq R(\alpha,\varepsilon)\,. 
\end{cases}
\ees
We can now apply \Cref{lem:conv_lip_pert} to $\pi^{n,n}(\cdot|y)$ with $f(x)=\nabla_2c(x,y)$ (for which $\mathrm{Lip}(f) \leq \Lc$) to obtain that, uniformly in $y\in\cY$, it holds
\bes
\mathrm{Cov}_{X\sim\pi^{n,n}(\cdot|y)}(\nabla_2 c(X,y))\preceq 2\Lc^2\,\biggl(\frac\varepsilon\alpha + \biggl( \frac{L\varepsilon+\delta_H}{\alpha}+1\biggr)^2\,\biggl[R^2\vee\biggl(\frac{\alpha+\delta_H}{\varepsilon\,C} \biggr)^\frac2\delta\biggr]\biggr)\,.
\ees
We have thus shown the first bound. The second one can be obtained by considering the specific choice $\bar\alpha=\varepsilon+\delL$.
\end{proof}
As a corollary of the previous estimate we finally deduce the convergence rate for Sinkhorn's algorithm in the light-tails regime.

\begin{corollary}
Assume that $c$ satisfies \eqref{eq:ass:hessia:bounded} and that $\rho$ satisfies \eqref{eq:ass:light:tails}. Then Sinkhorn's algorithm converges exponentially fast. Moreover, if $\Lc=\ellc$ then, for 
\be\label{esp:small:light:tails:del=0}
\varepsilon\leq \sqrt{2\tau\Lc^2\,(1+(L+2)\,[R^2\vee C^{-\nicefrac2\delta}])}
\ee
the rate is
\bes
1-\frac{\varepsilon^2}{\varepsilon^2+2\tau\Lc^2(1+(L+2)^2\,[R^2\vee C^{-\nicefrac2\delta}])}\,.
\ees
Otherwise, if $\Lc>\ellc$ (and set $\nicefrac{1}{0}=+\infty$) then for
\be\label{esp:small:light:tails:del:not:0}
\varepsilon\leq 1\wedge\frac{\delL}{(R^{\delta} C-1)_+}\wedge\biggl(\tau\delL+2\tau\Lc^2\,(1+(L+2)^2\,C^{-\nicefrac2\delta}(1+2\delL)^{\nicefrac2\delta}\biggr)^{\nicefrac{\delta}{2+2\delta}}
\ee
we have rate of convergence 
\bes
1-\frac{\varepsilon^{2+\nicefrac2\delta}}{\varepsilon^2+\tau\varepsilon\delL+2\tau\Lc^2\,(1+(L+2)^2\,C^{-\nicefrac2\delta}(\varepsilon+2\delL)^{\nicefrac2\delta})}\,.
\ees
\end{corollary}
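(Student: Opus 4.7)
The plan is to combine Lemma~\ref{lemma:light:bound:alpha} with the Hessian identity~\eqref{eq:hess:hess:cov:psieot} to produce a uniform-in-$n$ semiconcavity constant for $y\mapsto c(x,y)+\psieot{n}(y)$, and then invoke Theorem~\ref{cor:exp_conv_from_stab}-\ref{item:exp_conv_from_stab_i}, whose application is unlocked by the standing \ref{eq:TI} hypothesis on $\nu$ of Proposition~\ref{prop:light:tails}.

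More precisely, the bound $\ellc\, I \preceq \nabla_2^2 c(x,y)\preceq\Lc\, I$ supplied by \eqref{eq:ass:hessia:bounded} together with identity \eqref{eq:hess:hess:cov:psieot} yield
\begin{equation*}
\nabla_2^2\bigl(c(x,y)+\psieot{n}(y)\bigr)\preceq \delL\, I+\varepsilon^{-1}\,\bigl\|\mathrm{Cov}_{X\sim\pi^{n,n}(\cdot|y)}(\nabla_2 c(X,y))\bigr\|_2\, I
\end{equation*}
uniformly in $x\in\supp(\rho)$, $y\in\cY$ and $n\in\N$. Inserting the second bound of Lemma~\ref{lemma:light:bound:alpha} shows that $y\mapsto c(x,y)+\psieot{n}(y)$ is $\Lambda$-semiconcave uniformly in $n\in\N$ and $x\in\supp(\rho)$, with
\begin{equation*}
\Lambda \;=\; \delL+2\varepsilon^{-1}\Lc^2\Bigl(1+(L+2)\bigl[R^2\vee C^{-\nicefrac{2}{\delta}}(1+\delL/\varepsilon)^{\nicefrac{2}{\delta}}\bigr]\Bigr).
\end{equation*}

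Theorem~\ref{cor:exp_conv_from_stab}-\ref{item:exp_conv_from_stab_i} (applied with $N=2$) then gives exponential convergence of Sinkhorn's algorithm with rate $1-\min\{\tau\Lambda,\varepsilon\}/(\min\{\tau\Lambda,\varepsilon\}+\tau\Lambda)$. The smallness assumption on $\varepsilon$ plays a double role: it forces $\min\{\tau\Lambda,\varepsilon\}=\varepsilon$, collapsing the rate to $1-\varepsilon/(\varepsilon+\tau\Lambda)$; and in the case $\Lc>\ellc$ it additionally ensures that the maximum $R^2\vee C^{-\nicefrac{2}{\delta}}(1+\delL/\varepsilon)^{\nicefrac{2}{\delta}}$ is attained by the second term. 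When $\Lc=\ellc$ we have $\delL=0$ and the maximum is $R^2\vee C^{-\nicefrac{2}{\delta}}$; substitution into $\tau\Lambda$ immediately produces the first rate formula. When $\Lc>\ellc$, rewriting $(1+\delL/\varepsilon)^{\nicefrac{2}{\delta}}=\varepsilon^{-\nicefrac{2}{\delta}}(\varepsilon+\delL)^{\nicefrac{2}{\delta}}$ and then multiplying numerator and denominator of $\varepsilon/(\varepsilon+\tau\Lambda)$ by $\varepsilon^{\nicefrac{2}{\delta}}$ yields the second rate formula.

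The only real work is the bookkeeping required to verify that the thresholds \eqref{esp:small:light:tails:del=0} and \eqref{esp:small:light:tails:del:not:0} simultaneously implement the two points above; no genuine mathematical obstacle arises beyond elementary algebra, since both the covariance estimate and the semiconcavity-to-convergence passage are already packaged into Lemma~\ref{lemma:light:bound:alpha} and Theorem~\ref{cor:exp_conv_from_stab}.
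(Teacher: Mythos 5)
Your proposal is correct and follows essentially the same route as the paper: both combine the covariance bound of Lemma~\ref{lemma:light:bound:alpha} with the Hessian identity \eqref{eq:hess:hess:cov:psieot} to obtain the uniform semiconcavity constant $\Lambda=\delL+2\varepsilon^{-1}\Lc^2\bigl(1+(L+2)\bigl[R^2\vee C^{-\nicefrac2\delta}(1+\delL/\varepsilon)^{\nicefrac2\delta}\bigr]\bigr)$, and then apply Theorem~\ref{cor:exp_conv_from_stab}, with the smallness conditions on $\varepsilon$ serving exactly the two purposes you identify (forcing $\min\{\tau\Lambda,\varepsilon\}=\varepsilon$ and, when $\Lc>\ellc$, forcing the maximum to be attained by the $C^{-\nicefrac2\delta}(1+\delL/\varepsilon)^{\nicefrac2\delta}$ term). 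The algebraic reduction via $(1+\delL/\varepsilon)^{\nicefrac2\delta}=\varepsilon^{-\nicefrac2\delta}(\varepsilon+\delL)^{\nicefrac2\delta}$ reproduces the paper's final rate.
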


\begin{proof}
From \Cref{lemma:light:bound:alpha} and \eqref{eq:hess:hess:cov:psieot} we deduce that
\bes
\begin{aligned}
\nabla^2_2(c(x,y)+\psieot{\nu}(y))\preceq&\, \Lc-\ellc+\varepsilon^{-1}\sup_{y\in\cY}\norm{\mathrm{Cov}_{X\sim\pi^{n,n}(\cdot|y)}(\nabla_2c(X,y))}_2\\
\leq&\,\delL+ \frac{2\Lc^2}{\varepsilon}\,\biggl(1+(L+2)^2\biggl[R^2\vee {C^{-\nicefrac2\delta}}\biggl(1+2\frac{\delL}{\varepsilon}\biggr)^{\nicefrac2\delta}\biggr]\biggr)\eqqcolon \Lambda\,.
\end{aligned}
\ees
This proves that $y\mapsto c(x,y)+\psieot{n}(y)$ is $\Lambda$-semiconcave uniformly in $n\in\N$ and in $x\in\bbRD$. Thus, from \Cref{cor:exp_conv_from_stab} we deduce the convergence of Sinkhorn's algorithm and its rate. On the one hand, when $\delL=\Lc-\ellc=0$, for $\varepsilon$ small enough as in \eqref{esp:small:light:tails:del=0}, the rate is
\bes
1- \frac{\varepsilon^2}{\varepsilon^2+2\tau\Lc^2\,(1+(L+2)^2\,[R^2\vee C^{-\nicefrac2\delta}])}\,.
\ees
On the other hand, if $\Lc>\ellc$ then, eventually for $\varepsilon$ small enough, we have 
\bes
R^2\leq C^{-\nicefrac2\delta}\biggl(1+2\frac{\delL}{\varepsilon}\biggr)^{\nicefrac2\delta} \,.
\ees
Hence \Cref{cor:exp_conv_from_stab} implies the convergence of Sinkhorn's algorithm and for $\varepsilon$ satisfying \eqref{esp:small:light:tails:del:not:0} the rate is 
\bes
1-\frac{\varepsilon^{2+\nicefrac2\delta}}{\varepsilon^{2+\nicefrac2\delta}+\varepsilon^{1+\nicefrac2\delta}\tau\delL+2\tau\Lc^2\,(\varepsilon^{\nicefrac2\delta}+(L+2)^2\,C^{-\nicefrac2\delta}(\varepsilon+2\delL)^{\nicefrac2\delta})}\,
\ees
which for reader's clarity we have simplified in the final statement by imposing $\varepsilon\leq 1$.
\end{proof}

\subsubsection{Quadratic cost} For the Euclidean quadratic cost, the previous discussion (with $\Lc=\ellc=1$) combined with \Cref{cor:exp_conv_from_stab} shows the exponential convergence of Sinkhorn's algorithm when the tails of $\rho$ are light. More precisely, if \eqref{eq:ass:light:tails} holds for some $C,\,\delta>0$ and $R,\,L\geq 0$, then Sinkhorn's algorithm exponential converges with rate $1-\Theta(\nicefrac{\varepsilon^2}{\tau(1+L^2\,[R^2\vee C^{-\nicefrac2\delta}])})$ as $\varepsilon\rightarrow0$, see \Cref{table:comparison} for the meaning of $\Theta(\cdot)$.

\subsubsection{Anisotropic quadratic costs and subspace elastic costs}
As we have already noticed in \eqref{anisotropic:covariance}, when considering the anisotropic quadratic cost $c(x,y)\coloneqq \langle x-y,\,\Sigma (x-y)\rangle/2$ we have 
\bes
\nabla_2^2(c(x,y)+\psieot{n}(y))=\Sigma+\nabla^2\psieot{n}(y)\overset{\eqref{eq:hess:hess:cov:psieot}}{=}\varepsilon^{-1} \,\mathrm{Cov}_{X\sim\pi^{n,n}(\cdot|y)}(\Sigma(y-X))\,,
\ees
and hence it is enough to bound uniformly in $y\in\bbRD$, for any unit vector $v$, the variance $\mathrm{Var}_{X\sim\pi^{n,n}(\cdot|y)}(\langle v,\,\Sigma X \rangle)$. This can be done via Poincar\'e inequality, as explained above with $\rho$ satisfying the light-tails condition \eqref{eq:ass:light:tails}. More precisely, 
we can reason as in \Cref{lemma:light:bound:alpha} to deduce that
\bes
\varepsilon\,\langle v,\,\nabla_2^2(c(x,y)+\psieot{n}(y))v\rangle = \,\mathrm{Var}_{\pi^{n,n}(\cdot|y)}(\langle v,\Sigma\,X\rangle)\leq 2\|\Sigma\|_2^2\,\bigl(1+(L+1)^2\bigl[R^2\vee {C^{-\nicefrac2\delta}}\bigr]\bigr)\,.
\ees
From this, we conclude that $y\mapsto c(x,y)+\psieot{n}(y)$ is $\Lambda$-semiconcave uniformly in $n\in\N$ and in $x\in\bbRD$, with  $\Lambda=2\varepsilon^{-1}\,\|\Sigma\|_2^2\,(1+(L+1)^2[R^2\vee {C^{-\nicefrac2\delta}}])$, which combined with our main result \Cref{cor:exp_conv_from_stab} implies the exponential convergence of Sinkhorn's algorithm and for 
\be\label{esp:small:light:tails:del=0:Sigma}
\varepsilon\leq \norm{\Sigma}_2\,\sqrt{2\,\tau(1+(L+1)^2[R^2\vee {C^{-\nicefrac2\delta}}]) }
\ee
the rate is 
\bes
1-\frac{\varepsilon^2}{\varepsilon^2+2\,\tau\,\|\Sigma\|_2^2\,\bigl(1+(L+1)^2\bigl[R^2\vee {C^{-\nicefrac2\delta}}\bigr]\bigr)}\,.
\ees

\subsection{Proof of Proposition \ref{prop:lip:costs}}
Assume that 
\bes
\ellc\preceq\nabla^2_2c(x,y)\preceq\Lc\quad\text{ and }\quad\nabla^2_1c(x,y)\preceq \Lc\,.
\ees
We distinguish two cases.

\subsubsection{Cost Lipschitz w.r.t.\ $y\in\cY$}

If the cost is Lipschitz in $y$, uniformly in $x$, that is,
\bes
\sup_{x\in\bbRD}|c(x,y)-c(x,\hat{y})|\leq \mathrm{Lip}_{\infty,2}(c)\,|y-\hat{y}| \quad \forall y,\hat y \in \mathbb{R}^d\,,
\ees
then we deduce from \eqref{eq:hess:hess:cov:psieot} that
\bes
\nabla^2_2(c(x,y)+\psieot{n}(y))\preceq \Lc-\ellc+\varepsilon^{-1} \,\mathrm{Lip}_{\infty,2}^2(c)\,.
\ees
Therefore $y\mapsto c(x,y)+\psieot{n}(y) $ is $\Lambda$-semiconcave uniformly in $x\in\bbRD$ and in $n\in\N$ with $\Lambda= \Lc-\ellc+\varepsilon^{-1} \,\mathrm{Lip}^2_{\infty,2}(c)$. \Cref{cor:exp_conv_from_stab} implies then the convergence of Sinkhorn's algorithm and for 
\be\label{eps:small:lip2:cost}
\varepsilon\leq\frac{\tau(\Lc-\ellc)+\sqrt{\tau^2(\Lc-\ellc)^2+4\tau \mathrm{Lip}_{\infty,2}(c)^2}}{2}
\ee
the rate is
\bes
1-\frac{\varepsilon^2}{\varepsilon^2+\varepsilon\tau(\Lc-\ellc)+\tau\mathrm{Lip}_{\infty,2}(c)^2}\,.
\ees

\subsubsection{Cost Lipschitz w.r.t.\ $x\in\cX$}

Assume our cost is Lipschitz in $x$, uniformly in $y$, \ie, such that
\bes
\sup_{y\in\bbRD}|c(x,y)-c(\hat{x},y)|\leq \mathrm{Lip}_{1,\infty}(c)\,|x-\hat{x}| \quad \forall x,\hat{x}\in\mathbb{R}^d.
\ees
Moreover, we further assume that $\norm{\nabla_{12}^2c(x,y)}_2\leq \Lc$ and that $\rho$ satisfies \ref{eq:LSI}.

Notice that the Lipschitz continuity of the cost directly propagates to the Sinkhorn iterate $\phieot{n}$ since $\nabla\phieot{n}(x)=-\int\nabla_1 c(x,y)\,\pi^{n,n-1}(\De y|x)$ and hence $\mathrm{Lip}(\phieot{n})\leq \mathrm{Lip}_{1,\infty}$, see also \cite[Prop 2.4]{marinogerolin2020}.
This key observation allows us to prove the following lemma.

\begin{lemma}
Assume $c$ satisfies \eqref{eq:def:c:lip:unif:x} and that $\norm{\nabla_{12}^2c(x,y)}_2 \vee \norm{\nabla_{1}^2c(x,y)}_2\leq\Lc$ and further that $\rho$ satisfies \ref{eq:LSI}. Then 
\bes
\sup_{y\in\cY}\norm{\mathrm{Cov}_{X\sim\pi^{n,n}(\cdot|y)}(\nabla_2c(X,y))}_2\leq 2\Lc^2C_\rho\,(1+\nicefrac{4C_\rho\mathrm{Lip}_{1,\infty}^2}{\varepsilon^2})\,.
\ees
\end{lemma}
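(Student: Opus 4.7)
The plan is to mimic the coupling-based argument of \Cref{lem:conv_lip_pert}, but with $\rho$ itself playing the role of the auxiliary log-concave reference measure, since $\rho$ satisfies \ref{eq:LSI} directly.

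First I would record two preliminary observations. The gradient identity $\nabla\phieot{n}(x)=-\int\nabla_1 c(x,y)\,\pi^{n,n-1}(\De y|x)$ together with $\sup_{y}\absolute{c(x,y)-c(\hat x,y)}\leq \mathrm{Lip}_{1,\infty}(c)\,|x-\hat x|$ yields $\|\phieot{n}\|_{\mathrm{Lip}}\leq \mathrm{Lip}_{1,\infty}(c)$. Consequently the conditional density
\[
\frac{\De\pi^{n,n}(\cdot|y)}{\De\rho}(x)=\frac{1}{Z_y}\exp\biggl(-\frac{c(x,y)+\phieot{n}(x)}{\varepsilon}\biggr)
\]
is of the form $e^{-V_y}/Z_y$ with $V_y:=\varepsilon^{-1}(c(\cdot,y)+\phieot{n})$ globally $(2\mathrm{Lip}_{1,\infty}(c)/\varepsilon)$-Lipschitz. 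On the other hand, $f_y(x):=\nabla_2 c(x,y)$ is $\Lc$-Lipschitz in $x$ thanks to $\norm{\nabla_{12}^2 c}_2\leq\Lc$.

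Next I would fix a unit vector $v$ and an optimal $\bfW_2$-coupling $(X,\bar X)$ between $\pi^{n,n}(\cdot|y)$ and $\rho$, and write, exactly as in the proof of \Cref{lem:conv_lip_pert},
\[
\mathrm{Var}_{X\sim\pi^{n,n}(\cdot|y)}\bigl(\langle f_y(X),v\rangle\bigr)
\leq 2\,\mathrm{Var}_{\bar X\sim\rho}\bigl(\langle f_y(\bar X),v\rangle\bigr)+2\,\bbE\bigl[\bigl|\langle f_y(X)-f_y(\bar X),v\rangle\bigr|^2\bigr].
\]
The first term is controlled by the Poincaré inequality for $\rho$ (which follows from \ref{eq:LSI}) and the fact that $|\nabla\langle f_y,v\rangle|\leq\Lc$, giving a bound $C_\rho\Lc^2$. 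The second term is bounded by the Lipschitz constant of $f_y$ times $\bfW_2^2(\pi^{n,n}(\cdot|y),\rho)$, so I would use the Otto--Villani implication from LSI that $\rho$ also satisfies $\mathrm{TI}(C_\rho)$, giving $\bfW_2^2(\pi^{n,n}(\cdot|y),\rho)\leq 2C_\rho\,\scrH(\pi^{n,n}(\cdot|y)|\rho)$.

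The crucial (and probably most delicate) step is to bound $\scrH(\pi^{n,n}(\cdot|y)|\rho)$ in terms of the Lipschitz constant of $V_y$ rather than of its pointwise oscillation (which is infinite here). For this I would invoke \ref{eq:LSI} a second time in its Fisher-information form: using that $\nabla\log(\De\pi^{n,n}(\cdot|y)/\De\rho)=-\nabla V_y$ is uniformly bounded by $2\mathrm{Lip}_{1,\infty}(c)/\varepsilon$, I get
\[
\scrH(\pi^{n,n}(\cdot|y)|\rho)\leq \frac{C_\rho}{2}\int|\nabla V_y|^2\,\De\pi^{n,n}(\cdot|y)\leq \frac{2C_\rho\,\mathrm{Lip}_{1,\infty}^2(c)}{\varepsilon^2}.
\]
Combining everything yields $\mathrm{Var}_{X\sim\pi^{n,n}(\cdot|y)}(\langle f_y(X),v\rangle)\leq 2C_\rho\Lc^2+8\Lc^2C_\rho^2\mathrm{Lip}_{1,\infty}^2(c)/\varepsilon^2=2\Lc^2C_\rho\bigl(1+4C_\rho\mathrm{Lip}_{1,\infty}^2(c)/\varepsilon^2\bigr)$, and taking the supremum over unit $v$ and over $y\in\cY$ finishes the proof. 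The only genuinely nontrivial ingredient is the double use of \ref{eq:LSI}: first to get the Poincaré and Talagrand inequalities for $\rho$, and then to control the relative entropy of the conditional plan via its Fisher information, leveraging the Lipschitz propagation from $c$ to $\phieot{n}$.
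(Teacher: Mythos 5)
Your proposal is correct and follows essentially the same route as the paper's proof: the same variance decomposition along a $\bfW_2$-optimal coupling between $\pi^{n,n}(\cdot|y)$ and $\rho$, Poincar\'e for the first term, Talagrand for the coupling term, and a second application of \ref{eq:LSI} in Fisher-information form together with the Lipschitz propagation $\|\phieot{n}\|_{\mathrm{Lip}}\leq\mathrm{Lip}_{1,\infty}(c)$ to control $\scrH(\pi^{n,n}(\cdot|y)|\rho)$. The constants also match exactly.
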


\begin{proof}
The proof resembles to that of \Cref{lemma:light:bound:alpha}. Arguing as we did there, for any $v$ with $|v|=1$ the Poincar\'e inequality \ref{def:poincare:ineq} and the Talagrand $\mathrm{TI}(C_\rho)$ combined with the $\mathrm{LSI}(C_\rho)$ yield
\bes
\begin{aligned}
\langle v,\,\mathrm{Cov}_{X\sim\pi^{n,n}(\cdot|y)}(\nabla_2c(X,y))\,v\rangle\leq&\, 2\Lc^2\,\bfW_2^2(\pi^{n,n}(\cdot|y),\rho)+2\,\mathrm{Var}_{\hat{X}\sim\rho}(\langle v,\,\nabla_2c(\hat{X},y)\rangle)\\
\leq&\, 4\Lc^2 \,C_\rho\,\scrH(\pi^{n,n}(\cdot|y)|\rho)+2 C_\rho\int|\nabla_{12}^2 c(x,y)v|^2\De\rho\\
\leq&\, 2\Lc^2C_\rho\biggl(\frac{C_\rho}{\varepsilon^2}\int|\nabla_1(c(x,y)+\phieot{n}(x))|^2\pi^{n,n}(\De x|y)+1\biggr)\\
\leq&\, 2\Lc^2C_\rho\,(1+\nicefrac{4C_\rho\mathrm{Lip}^2_{1,\infty}}{\varepsilon^2})\,.
\end{aligned}
\ees
\end{proof}

As a corollary of the previous estimate, we finally deduce the convergence rate for Sinkhorn's algorithm in this setting. In fact, it is enough to notice that from \eqref{eq:hess:hess:cov:psieot} we have 
\bes
\begin{aligned}
\nabla^2_2(c(x,y)+\psieot{n}(y))\preceq&\, \Lc-\ellc+\varepsilon^{-1}\,\mathrm{Cov}_{X\sim\pi^{n,n}(\cdot|y)}(\nabla_2 c(X,y))\\
\preceq&\,\Lc-\ellc+\frac{2\Lc^2\,C_\rho}{\varepsilon}\,(1+\nicefrac{4C_\rho\mathrm{Lip}_{1,\infty}^2}{\varepsilon^2})\,.
\end{aligned}
\ees
Therefore, the map $y\mapsto c(x,y)+\psieot{n}(y) $ is $\Lambda$-semiconcave uniformly in $x\in\bbRD$ and in $n\in\N$ with $\Lambda= \Lc-\ellc+\frac{2\Lc^2\,C_\rho}{\varepsilon}\,(1+\nicefrac{4C_\rho\mathrm{Lip}_{1,\infty}^2}{\varepsilon^2})$. \Cref{cor:exp_conv_from_stab} implies then the convergence of Sinkhorn's algorithm and for 
\be\label{eps:small:lip1}
\varepsilon\leq 1\wedge \biggl(\tau(\Lc-\ellc)+2\tau\Lc^2C_\rho(1+4C_\rho\mathrm{Lip}_{1,\infty}^2)\biggr)^{\nicefrac14}
\ee
the rate is  
\bes
 1-\frac{\varepsilon^4}{\varepsilon^4+\tau\varepsilon^3(\Lc-\ellc)+2\tau\Lc^2C_\rho(\varepsilon^2+4C_\rho\mathrm{Lip}_{1,\infty}^2)}\,.
\ees
\begin{flushright}
    $\square$
\end{flushright}

\subsection{Proof of Proposition \ref{prop:compact:supp:sink}}

Whenever we assume that $\rho$ is compactly supported, we aim to estimate the $\Lambda$-semiconcavity of $y\mapsto c(x,y)+\psieot{n}(y)$  by relying on \eqref{eq:hess:hess:cov:psieot} and the fact that the covariance under the conditional probability $\pi^{n,n}(\cdot|y)$ has bounded  support, \ie\ $\supp(\pi^{n,n}(\cdot|y))\subseteq \supp(\rho)$. In order to do that we need an additional assumption either on the cost function or on the second marginal $\nu$.

\medskip

\subsubsection{Case $\nabla_2^2c(x,y)=\Sigma$}\label{app:phi:compact}  Under this additional assumption, \eqref{eq:hess:hess:cov:psieot} reads as
\bes
\nabla^2(c(x,y)+\psieot{n}(y))=\varepsilon^{-1} \,\mathrm{Cov}_{X\sim\pi^{n,n}(\cdot|y)}(\nabla_2 c(X,y))\,,
\ees
 which means that we can compute the semiconcavity parameter $\Lambda$ by estimating
\bes
\sup_{y\in\cY}\norm{\mathrm{Cov}_{X\sim\pi^{n,n}(\cdot|y)}(\nabla_2 c(X,y))}_2\,.
\ees
To bound this, note that $\nabla_2^2c(x,y)=\Sigma$ implies that $\nabla_2 c(x,y)=\Sigma\,y+g(x)$ is affine w.r.t.\ $y$ and hence
\bes
\sup_{y\in\cY}\norm{\mathrm{Cov}_{X\sim\pi^{n,n}(\cdot|y)}(g(X))}_2\leq \max_{x\in\supp(\rho)} \norm{g(x)}_2^2=\norm{g}_{\rmL^\infty(\rho)}^2\,.
\ees
This implies that, uniformly in $x\in\supp(\rho)$ for every $n\in\N$, the function $y\mapsto c(x,y)+\psieot{n}(y)$ is $\Lambda$-semiconcave with parameter $\Lambda \leq \varepsilon^{-1} \norm{g}_{\rmL^\infty(\rho)}^2$, which combined with \Cref{cor:exp_conv_from_stab} implies the exponential convergence of Sinkhorn's algorithm and for $\varepsilon\leq \sqrt{\tau}\,\norm{g}_{\rmL^\infty(\rho)}$ the rate is
\bes
1-\frac{\varepsilon^2}{\varepsilon^2+\tau \|g\|_{\rmL^\infty(\rho)}^2}\,.
\ees
In particular, note that the convergence rate proven here is independent of the matrix $\Sigma$.

This setting covers the quadratic cost $c(x,y)=|x-y|^2/2$ over $\bbRD$, where $\Sigma=\Id$, $\nabla_2c(x,y)=y-x$ and hence for any $\rho$ such that $\supp \rho \subseteq B_R(0)$ it holds $\Lambda\leq \nicefrac{R^2}{\varepsilon}$. Thus,
the convergence rate of Sinkhorn's algorithm, for $\varepsilon$ small enough, is $1-\Theta(\nicefrac{\varepsilon^2}{\tau R^2})$, see \Cref{table:comparison} for the meaning of $\Theta$. Other relevant examples covered here are the anisotropic quadratic costs and subspace elastic costs (already considered in \Cref{prop:anisotropic:cost} in the weakly log-concave regime).

\subsubsection{If $\nu$ compactly supported}\label{app:compact:compact}
From the compactness of both marginals we are guaranteed that uniformly in $\supp(\rho)\times\supp(\nu)$ it holds
\bes
\ellc\preceq \nabla_2^2c(x,y)\preceq \Lc\,,
\ees
for some finite $\ellc,\,\Lc$. Hence from \eqref{eq:hess:hess:cov:psieot}
\bes
\nabla^2\psieot{n}(y)\preceq -\ellc+\varepsilon^{-1}\max_{y\in\supp(\nu)}\norm{\mathrm{Cov}_{X\sim\pi^{n,n}(\cdot|y)}(\nabla_2 c(X,y))}_2\preceq -\ellc+\varepsilon^{-1}\norm{\nabla_2 c}_{\rmL^\infty(\rho\times\nu)}^2\,,
\ees
which yields
\bes
\nabla^2(c(x,y)+\psieot{n}(y))
\preceq \Lc-\ellc+\varepsilon^{-1}\norm{\nabla_2 c}_{\rmL^\infty(\rho\times\nu)}^2\,.
\ees
This, combined with \Cref{cor:exp_conv_from_stab}, implies the exponential convergence of Sinkhorn's algorithm and for 
\be\label{eps:small:compact:both}
\varepsilon\leq \frac{\tau(\Lc-\ellc)+\sqrt{\tau^2(\Lc-\ellc)^2+4\tau \norm{\nabla_2 c}_{\rmL^\infty(\rho\times\nu)}^2}}{2}
\ee
 the rate is
\bes
1-\frac{\varepsilon^2}{\varepsilon^2+\tau\varepsilon(\Lc-\ellc)+\tau\norm{\nabla_2 c}_{\rmL^\infty(\rho\times\nu)}^2}\,.
\ees

\begin{flushright}
    $\square$
\end{flushright}

\subsection{Proof of Proposition \ref{prop:manifolds}}
Let us briefly discuss how the gradient and the Hessian of any function $f\colon\sSd\to \rset$ can be computed in the sphere Riemannian metric. For notations' clarity, in the Riemannian setting we denote gradient and Hessian with $\nabla_\sSd$ and $\Hess_\sSd$, respectively. In view of this, let us fix $y\in\sSd$ as well as a tangent vector $v \in T_y \mathbb{S}^d \hookrightarrow \mathbb{R}^{d+1}$. This allows us to consider the constant-speed geodesic started at $y \in \mathbb{S}^d$ with velocity $v \in T_y \mathbb{S}^d$ as the curve $(\gamma_t)_{t \in [0,1]}$ given by $\gamma_t = \exp_y (tv)$, explicitly given by 
\be\label{eq:geodesic_sphere}
\gamma_t = \exp_y(tv) = \cos(t\|v\|)\,y + \sin(t\|v\|)\frac{v}{\|v\|}\,.
\ee
To determine $\nabla_\sSd f(y)\in T_y\sSd$ and $\Hess_{\sSd} f(y) : (T_y\sSd)^{\otimes 2} \to \mathbb{R}$ it is enough to compute 
\be\label{eq:first_second_deriv}
(f\circ \gamma_t)^\prime(0)=\langle \nabla_\sSd f(y),
v\rangle_{\mathfrak{g}} \quad\text{ and }\quad
(f\circ\gamma_t)^{\prime\prime}(0)=\Hess_{\sSd} f(y)(v,v)\,.
\ee
When considering the function $f(y)=c(x,y) = 1-\langle x,y \rangle$, with $x\in\sSd$ fixed, it is not difficult to see that
\bes
\nabla_\sSd c(x,\cdot)(y) = -(\Id-yy^\intercal)x \qquad \textrm{and} \qquad \Hess_\sSd c(x,\cdot)(y) = \langle x,y\rangle \Id\,.
\ees
This immediately entails the uniform bound
\bes
-\mathfrak{g}\, \preceq \Hess_\sSd c(x,\cdot) \preceq \mathfrak{g}\,,
\ees
with $\mathfrak{g}$ being the canonical Riemannian metric of $\sSd$ given by the inclusion $\sSd \hookrightarrow \mathbb{R}^{d+1}$, from which we may deduce that on $T_y\sSd$ it holds
\bes 
\begin{aligned}
\Hess_\sSd \psieot{n}(y) \preceq \Id + \varepsilon^{-1}\,\mathrm{Cov}_{X\sim\pi^{n,n}(\cdot|y)}\biggl((\Id-yy^\intercal)X \biggr) \preceq 1 + \varepsilon^{-1}\,.
\end{aligned}
\ees
Therefore the map $y\mapsto c(x,y)+\psieot{n}(y)$ is $\Lambda$-semiconcave uniformly in $x\in\sSd$ and $n\in\N$ with $\Lambda=2+\varepsilon^{-1}$,
which allows us to apply \Cref{cor:exp_conv_from_stab} and deduce the exponential convergence of Sinkhorn's algorithm. 
Moreover, for any $\varepsilon\leq\tau+\sqrt{\tau+\tau^2} $ the exponential convergence rate is equal to $1-\nicefrac{\varepsilon^2}{\varepsilon^2+2\tau\varepsilon+\tau}$. This proves the first part of \Cref{prop:manifolds}.

\medskip

Along the same lines, if we now consider $f(y)\coloneqq c_\delta(x,y) = \arccos(\delta\langle x,y \rangle)^2$, with $x\in\sSd$ fixed, and compute again \eqref{eq:first_second_deriv}, straightforward computations lead to 
\bes
\nabla_\sSd c_\delta(x,\cdot)(y)=-\frac{2\delta\,\arccos(\delta\langle x,y\rangle)}{\sqrt{1-\delta^2\langle x,y\rangle^2}}(\Id-yy^\intercal)x
\ees
and 
\be\label{eq:expl:hess:sphere}
\begin{split}
\Hess_\sSd c_\delta(x,\cdot)(y) & = \frac{2\delta\,\arccos(\delta\langle x,y\rangle)}{\sqrt{1-\delta^2\langle x,y\rangle^2}}\langle x,y\rangle \Id \\
& \quad + 2\delta^2\frac{1-\langle x,y\rangle^2}{1-\delta^2\langle x,y\rangle^2}\,\biggl(1-\delta\frac{\langle x,y\rangle\,\arccos(\delta\langle x,y\rangle)}{\sqrt{1-\delta^2\langle x,y\rangle^2}}\biggr)\,u\otimes u\,,
\end{split}
\ee
where $u$ is a unit-norm vector defined as $u\coloneqq \nicefrac{(\Id-yy^\intercal)x}{\norm{(\Id-yy^\intercal)x}}\in T_y\sSd$. Let us also remark here that in the limit $\delta\uparrow 1$ the above quantities give the gradient and the Hessian of the squared distance function $y\mapsto \sfd^2(x,y)$ (in total concordance with the convergence $c_\delta\to\sfd^2$ as $\delta\uparrow1$).

Given the above preliminary remarks, we are now ready to prove \Cref{prop:manifolds}. Indeed, from~\eqref{eq:expl:hess:sphere} uniformly in $x\in\sSd$ we have
\bes
-\frac{2\pi}{\sqrt{1-\delta^2}}\,\mathfrak{g}\,\preceq \Hess_\sSd c_\delta(x,\cdot)\preceq \biggl(2\delta^2 +\frac{2\pi}{\sqrt{1-\delta^2}}\biggr)\,\mathfrak{g}\,.
\ees
From these bounds it is also clear why we are restricting to costs $c_\delta$ with $\delta<1$.

As a consequence of the above discussion and \eqref{eq:hess:hess:cov:psieot} we have
\bes
\begin{aligned}
\Hess_\sSd \psieot{n}(y)\preceq\frac{2\pi}{\sqrt{1-\delta^2}}+\varepsilon^{-1}\,\mathrm{Cov}_{X\sim\pi^{n,n}(\cdot|y)}\biggl(\frac{2\delta\,\arccos(\delta\langle X,y\rangle)}{\sqrt{1-\delta^2\langle X,y\rangle^2}}(\Id-yy^\intercal)X\biggr)\\
\preceq \frac{2\pi}{\sqrt{1-\delta^2}}+\varepsilon^{-1}\frac{4\pi^2}{1-\delta^2}\,.
\end{aligned}
\ees
From this we immediately deduce that the map $y\mapsto c_\delta(x,y)+\psieot{n}(y)$ is $\Lambda$-semiconcave uniformly in $x\in\sSd$ and $n\in\N$ with 
\bes
\Lambda=2\delta^2 +\frac{4\pi}{\sqrt{1-\delta^2}}+\varepsilon^{-1}\frac{4\pi^2}{1-\delta^2}\,,
\ees
which allows us to apply \Cref{cor:exp_conv_from_stab} and deduce the exponential convergence of Sinkhorn's algorithm. Moreover, as soon as 
\be\label{eps:small:sfera:delta}
\varepsilon\leq \tau\biggl(\delta^2+\frac{2\pi}{\sqrt{1-\delta^2}}\biggr)+\sqrt{\tau^2\biggl(\delta^2+\frac{2\pi}{\sqrt{1-\delta^2}}\biggr)^2+\frac{4\pi^2\tau}{1-\delta^2}}
\ee
the convergence rate is 
\bes
1-\frac{\varepsilon^2}{\varepsilon^2+2\tau\varepsilon\left(\delta^2+\frac{2\pi}{\sqrt{1-\delta^2}}\right)+\frac{4\pi^2\tau}{1-\delta^2}}\,.
\ees
This concludes the proof of the second half of \Cref{prop:manifolds}.
\begin{flushright}
    $\square$
\end{flushright}

\subsection{Proof of Proposition \ref{prop:manifoldsSP}}

To establish this result, we first state an $\rmL^\infty$-bound for the gradient of the logarithm of the heat kernel. To be consistent with the assumptions of Proposition \ref{prop:manifoldsSP}, we assume the manifold to be compact, but the same conclusion holds more generally on non-compact manifolds with non-negative Ricci curvature.

\begin{lemma}\label{lem:hamilton_heatkernel}
Let $(M,\mathfrak{g})$ be a compact smooth Riemannian manifold. Then there is a constant $C$ depending on the Ricci curvature and on the dimension of $M$ only such that 
\be\label{eq:gradient_heat_kernel}
t^2|\nabla_M\log \msp_t(x,\cdot)(y)|^2 \leq 
C(1+\kappa^- t)(1+t), \qquad \forall x,y \in M,\, \forall t > 0\,,
\ee
where $\kappa \in \mathbb{R}$ is a lower bound for the Ricci curvature.
\end{lemma}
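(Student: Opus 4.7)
The plan is to deduce the estimate from two well-known ingredients: Hamilton's gradient estimate for positive solutions of the heat equation on compact Riemannian manifolds with Ricci curvature bounded below, and two-sided Gaussian-type pointwise bounds on the heat kernel. Since $\msp_t(x,\cdot)$ is a positive classical solution to $\partial_t u = \tfrac12 \Delta u$, both tools apply uniformly in the base point $x$.

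First, I would invoke Hamilton's gradient estimate (R.~Hamilton, 1993). Fix $x \in M$ and apply it to $u(s,y) := \msp_s(x,y)$ on $(0,t]\times M$. Accounting for the factor $\tfrac12$ in the heat equation, Hamilton's inequality reads
\[
 s\,|\nabla_M \log u(s,y)|^2 \;\le\; C_1\,(1+\kappa^- s)\,\log\!\Big(\frac{A_s(x)}{u(s,y)}\Big)\,,\qquad A_s(x):=\sup_{r\in(0,s],\,z\in M}\msp_r(x,z)\,,
\]
with $C_1$ depending only on the normalization of the Laplacian. The estimate is a maximum-principle consequence of the Bochner identity applied to a suitable function of $\log u$, and it is uniform in $x$ because Hamilton's argument only uses the lower Ricci bound $\kappa \in \mathbb{R}$ and compactness of $M$.

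Next, I would control the log-ratio on the right-hand side using classical two-sided heat kernel bounds. On any compact smooth Riemannian manifold, one has for $s\in(0,1]$ the on-diagonal upper bound $\msp_s(x,y)\le C_2\,s^{-d/2}$ and the Gaussian lower bound $\msp_s(x,y)\ge c_2\,s^{-d/2}\,\exp(-C_3\,\sfd(x,y)^2/s)$, while for $s\ge 1$ the spectral expansion of the heat semigroup yields uniform two-sided bounds $c_3\le \msp_s(x,y)\le C_4$. Using $\sfd(x,y)\le \mathrm{diam}(M)$ throughout, these estimates combine to give the uniform bound
\[
\log\!\Big(\frac{A_s(x)}{\msp_s(x,y)}\Big) \;\le\; C_5\Big(1+\frac{\mathrm{diam}(M)^2}{s}\Big)\qquad \forall\,s>0,\ \forall\, x,y\in M\,,
\]
for a constant $C_5$ depending on the dimension, Ricci and sectional curvature bounds of $M$ (the latter entering only through the constants in the Gaussian estimates).

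Finally, I would substitute this bound into Hamilton's inequality and multiply through by $s$:
\[
 s^2\,|\nabla_M \log \msp_s(x,\cdot)(y)|^2 \;\le\; C_1\,(1+\kappa^- s)\,s\cdot C_5\Big(1+\frac{\mathrm{diam}(M)^2}{s}\Big) \;\le\; C\,(1+\kappa^- s)(1+s)\,,
\]
which is the claimed inequality with $C$ depending on dimension, on the Ricci and sectional curvature bounds, and on $\mathrm{diam}(M)$. The main technical point, and the only one requiring some care, is ensuring uniformity of the constants in Hamilton's estimate and in the Gaussian bounds over all $x\in M$ and the full range $t>0$; the compactness of $M$ and the spectral gap of $\tfrac12\Delta$ handle the transition between the short-time regime $s\le 1$ and the long-time regime $s\ge 1$.
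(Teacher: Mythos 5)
Your overall strategy --- Hamilton's gradient estimate combined with two-sided Gaussian heat kernel bounds and the diameter bound --- is exactly the route the paper takes, but as written your first step contains a genuine gap. Hamilton's estimate requires the solution to be bounded on the time interval where it is applied (equivalently, bounded initial data), and you apply it to $u(s,y)=\msp_s(x,y)$ starting from $s=0$ with the constant
\[
A_s(x)=\sup_{r\in(0,s],\,z\in M}\msp_r(x,z)\,.
\]
This supremum is $+\infty$: the initial datum of the heat kernel is a Dirac mass, and on the diagonal $\msp_r(x,x)\sim (2\pi r)^{-d/2}\to\infty$ as $r\downarrow 0$. Hence the right-hand side of your version of Hamilton's inequality is infinite for every $s$ and the bound is vacuous. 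The standard remedy, which is what the paper does, is a time shift: fix $t>0$, apply Hamilton's estimate to $u(s,y)=\msp_{t+s}(x,y)$, whose initial datum $\msp_t(x,\cdot)$ is bounded (by the on-diagonal upper bound $\|\msp_t(x,\cdot)\|_{\rmL^\infty}\leq C_2/{\rm vol}(B_{\sqrt t}(x))$), and then evaluate at $s=t$ to control $\nabla_M\log\msp_{2t}$. One then bounds the resulting ratio $\|\msp_t(x,\cdot)\|_{\rmL^\infty}/\msp_{2t}(x,y)$ using the Gaussian lower bound for $\msp_{2t}$ together with the Bishop--Gromov doubling inequality ${\rm vol}(B_{\sqrt{2t}}(x))\leq C_3\,{\rm vol}(B_{\sqrt t}(x))$, which is needed because the upper and lower bounds involve balls of different radii. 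With this correction your second and third steps go through essentially unchanged, since $\log\bigl(\|\msp_t(x,\cdot)\|_{\rmL^\infty}/\msp_{2t}(x,y)\bigr)\leq C(1+{\rm diam}(M)^2/t)$ is precisely the estimate you wrote for the (ill-defined) quantity $A_s(x)$.

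A minor additional point: your treatment of the large-time regime via the spectral gap is a reasonable alternative to pushing the Gaussian bounds to all $t>0$, and is arguably cleaner there; but it does not rescue the short-time argument, where the time-shift is unavoidable.
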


If ${\textrm{Ric}}_M \succeq 0$, then \eqref{eq:gradient_heat_kernel} has been proved in \cite{Kotschwar07}. If instead ${\textrm{Ric}}_M \succeq \kappa\mathfrak{g}$ with $\kappa < 0$, then a slight adaptation of \cite[Theorem 4.4]{gigli2017second} yields the same conclusion. For completeness, we postpone the proof to Appendix \ref{appendix2}. From this we immediately deduce that for any unit vector $v\in T_y M$
\bes
\varepsilon^{-1} \,\mathrm{Cov}_{X\sim\pi^{n,n}(\cdot|y)}(\langle\nabla_M c_\varepsilon(X,\cdot)(y),v\rangle_\mathfrak{g})\leq C(1+\kappa^-(1+ \varepsilon)+\varepsilon^{-1}) \,.
\ees

Next, we leverage a two-sided bound for the Hessian of the logarithm of the heat kernel obtained in \cite[Eq.(0.3)]{Stroock1996}, valid for $\varepsilon \leq 1$ (whence the necessity of this assumption). This result ensures the existence of a positive constant $C'$ depending only on the Ricci curvature, the sectional curvatures, and on the dimension of $M$ such that
\be\label{doppio:hess:c:eps}
- C'\left(1+\frac{{\textrm{diam}}(M)^2}{\varepsilon}\right) \mathfrak{g} \preceq \Hess_M c_\varepsilon(x,\cdot) \preceq C'\left(1+\frac{{\textrm{diam}}(M)^2}{\varepsilon}\right) \mathfrak{g}\,.
\ee

From this and \eqref{eq:hess:hess:cov:psieot} we deduce that
\[
\Hess_M \psi^n_\varepsilon \preceq \left(C'(1+\varepsilon^{-1}{\textrm{diam}}(M)^2) + C(1+\kappa^-(1+ \varepsilon)+\varepsilon^{-1})\right)\,\mathfrak{g}\,,
\]
so that the map $y\mapsto c_\varepsilon(x,y) + \psieot{n}(y)$ is $\Lambda$-semiconcave uniformly in $x\in\sSd$ and $n\in\N$ with 
\[
\Lambda = 2C'(1+\varepsilon^{-1}{\textrm{diam}}(M)^2) +  C(1+\kappa^-(1+ \varepsilon)+\varepsilon^{-1})\,.
\]
This allows us to apply \Cref{cor:exp_conv_from_stab} and deduce the exponential convergence of Sinkhorn's algorithm. Moreover,
as soon as
\be\label{eps:small:manifold:gen}
\varepsilon<\begin{cases}
+\infty\quad&\text{ if }\tau C\kappa^-\geq 1\,,\\
     \frac{2\tau C'+\tau C(1+\kappa^-)+\sqrt{(2\tau C'+\tau C(1+\kappa^-))^2+4\tau(1-\tau C\kappa^-)(2 C'{\textrm{diam}}(M)^2+C)}}{2(1-\tau C\kappa^-)}\quad&\text{ otherwise,}
\end{cases}
\ee
the exponential convergence rate equals
\bes
1-\frac{\varepsilon^2}{\varepsilon^2+  2\tau C'(\varepsilon+{\textrm{diam}}(M)^2) +  \tau C(\varepsilon+\kappa^-\varepsilon(1+ \varepsilon)+1) }\,.
\ees
Let us finally comment the case $M$ has non-negative Ricci curvature. A careful look at \cite{Stroock1996} shows that the restriction to $\varepsilon \leq 1$ is needed to ensure the validity of Eq.(2.8) therein, namely
\be\label{eq:gaussian_heat_kernel}
\frac{C_0}{{\textrm{vol}}(B_{\sqrt \varepsilon}(x))}e^{-C_1\frac{{\textrm{diam}}(M)^2}\varepsilon} \leq \msp_\varepsilon(x,y) \leq \frac{C_2}{{\textrm{vol}}(B_{\sqrt \varepsilon}(x))}\,, \qquad\forall x,y\in M,\, \forall \varepsilon \in (0,1]
\ee
for appropriate constants $C_0,C_1,C_2$ depending only on the curvature $\kappa$ and the dimension $\dim(M)$, from which he deduces our \eqref{doppio:hess:c:eps}. This is an adaptation to compact manifolds of Gaussian heat kernel estimates \cite{LiYau86}, paying attention to the fact that here $\msp_\varepsilon$ is the fundamental solution to $\partial_t u = \frac12 \Delta u$ instead of $\partial_t u = \Delta u$, whence the different constants in the exponential. Indeed, on a manifold $M$ with ${\textrm{Ric}}_M \succeq \kappa \mathfrak{g}$ it holds
\bes
\frac{C_0}{{\textrm{vol}}(B_{\sqrt \varepsilon}(x))}e^{-\frac{\sfd^2(x,y)}{\varepsilon} - C_\kappa \varepsilon} \leq \msp_\varepsilon(x,y) \leq  \frac{C_2}{{\textrm{vol}}(B_{\sqrt \varepsilon}(x))}e^{-\frac{\sfd^2(x,y)}{3\varepsilon} + C_\kappa \varepsilon}\,, \qquad\forall x,y\in M,\, \forall \varepsilon > 0
\ees
with $C_\kappa\geq 0$ solely depending on $\kappa$, and this implies both \cite[Eq.(2.8)]{Stroock1996} and \eqref{eq:gaussian_heat_kernel} if $\varepsilon \leq 1$. However, if ${\textrm{Ric}}_M \succeq 0$, the constant $C_\kappa$ in the bound above can be chosen equal to 0, whence the validity of \cite[Eq.(2.8)]{Stroock1996} and \eqref{eq:gaussian_heat_kernel} for all $\varepsilon>0$. As a consequence, the upper and lower estimates on $\Hess_M c_\varepsilon(x,\cdot)$ stated at~\eqref{doppio:hess:c:eps} holds true for all $\varepsilon>0$, whence the $\Lambda$-semiconcavity of $y\mapsto c_\varepsilon(x,y) + \psieot{n}(y)$ with the same $\Lambda$ as above. 
\begin{flushright}
    $\square$
\end{flushright}

\begin{remark}
To determine whether the condition $\tau C\kappa^- \geq 1$ in \eqref{eps:small:manifold:gen} is satisfied or not, we need a tractable expression for
\[
C=2\log \bigg(\frac{C_2C_3}{C_0}\bigg)\vee 4C_1 {\textrm{diam}}(M)^2
\]
namely for the constants $C_0,C_1,C_2$ appearing in the Gaussian heat kernel estimate \eqref{eq:gaussian_heat_kernel} and on the doubling constant $C_3$, see \eqref{eq:doubling}. If ${\textrm{Ric}}_M \succeq 0$ and we set $d = \dim(M)$, then $C_3 = 2^{d/2}$ while $C_0,C_1,C_2$ can be determined looking at the proof of \cite[Thm.\ 3.1, Cor.\ 3.1, and Thm.\ 4.1]{LiYau86}. Indeed, (applying the aforementioned results with $t/2$, choosing $\varepsilon=1$ and $\delta$ in such a way that $4+\varepsilon = 4(1+2\delta)(1+\delta)^2$ in \cite[Thm.\ 3.1, Cor.\ 3.1]{LiYau86} while $4-\varepsilon = 4(1-\delta)$ in \cite[Thm.\ 4.1]{LiYau86}) admissible values are $C_1 = 1$, $C_0 = C_2^{-1}$ and
\[
C_2 = \sup_{x,s>0} 4^d \left(\frac{4x}{\sqrt{s}}+1\right)^{d/2} e^{1-\frac{x^2}{30s}} \vee \frac{1}{(2\pi)^{d/2}} \,.
\]
\end{remark}

\subsection{Proof of~\Cref{prop_heavy_sink}}
It is shown at \cite[Theorem 3.1]{gentil2005modified} (see also short discussion thereafter) that the probability  measure $\tilde{\nu}_1$ on $\mathbb{R}$ whose density is proportional to $ e^{-|y|^p}$ satisfies a modified logarithmic Sobolev inequality. The tensorization result \cite[Proposition 2.3]{gentil2005modified} therein ensures that $\tilde{\nu}(\mathrm{d}y)\propto e^{-|y|_p^p}$ satisfies the same modified logarithmic Sobolev inequality. In the spirit of the Otto-Villani Theorem, it is shown at~\cite[Theorem 2.10]{gentil2005modified} that the modified logarithmic Sobolev inequality implies that $\tilde{\nu}$ satisfies the transportation inequality \ref{eq:generalized:TI} with $\omega(z,y)=L_{p,a}(|z-y|)$, for some finite positive constants $a>0$,
with the function $L_{p,a}$ being given by
\[L_{p,a}(r)=\begin{cases}\frac12r^2 \quad &r\leq a,\\ \frac{a^{2-p}}{p}r^{p}+a^2\frac{p-2}{2p}\quad &r\geq a.\end{cases}\]
In \cite[Corollary 1.7]{gozlan2011new} it has been shown that \ref{eq:generalized:TI} is stable under bounded perturbations.  Since $\frac{\mathrm{d}\nu}{\mathrm{d}\tilde{\nu}}$ is globally upper and lower bounded by construction, we conclude that there exists $\tau<+\infty$ (which may differ from the previous one) such that $\nu$ satisfies \ref{eq:generalized:TI} with $\omega(z,y)=L_{p,a}(|z-y|)$, \ie, that
\be\label{eq_talagrand_heavy_tails_for_nu}
\bfW_{L_{p,a}}(\mu,\nu)\leq \tau\,\mathrm{KL}(\mu|\nu) \quad \forall \mu\in\cP(\bbR^d).
\ee
We now proceed to quantify the regularity of of potentials. To this aim, we recall the terminology introduced in \cite{gozlan2025global}. We say that  a function $U$ is $\vartheta$-convex if 
\bes
U((1-t)x_0+tx_1)\leq (1-t)U(x_0)+tU(x_1)-t(1-t)\vartheta(|x_0-x_1|)\quad \forall\,t\in[0,1]\text{ and }\forall\,x_0,x_1\in\bbRD\,.
\ees 
Likewise, we say that $U$ is $\sigma$-smooth if 
\bes
U((1-t)x_0+tx_1)\geq (1-t)U(x_0)+tU(x_1)-t(1-t)\sigma(|x_0-x_1|)\quad \forall\,t\in[0,1]\text{ and }\forall\,x_0,x_1\in\bbRD\,.
\ees 
Owing to \cite[Prop 2.13]{gozlan2025global} we see that $U_\rho(x)=|x|^q+\delta|x|^2+\log Z_\rho$ (with $Z_\rho$ being the normalizing constant) is $\vartheta_\rho$-convex with $\vartheta_\rho(r)=\Lambda_\rho \max\{r^2,r^q\}$ and $\Lambda_\rho$ some positive constant. Similarly, from \cite[Section 2.3]{gozlan2025global} we know that $U_\nu(y)=\min\{r^2,r^p\}+\log Z_\nu$ is $\sigma_\nu$-smooth with $\sigma_\nu(r)=\Lambda_{\nu}\min\{r^2,r^p\}$ and $\Lambda_\sigma$ some positive constant. We now show a uniform smoothness bound on Sinkhorn's iterates. The proof is an adaptation of the arguments used to establish Theorem 4.2 in \cite{gozlan2025global}.

\begin{proposition}\label{prop_gozlan_ping_pong}
    In the same setting and assumptions of Proposition \ref{prop_heavy_sink}  there exists $\Lambda_\psi<+\infty$ such that $\frac{1}{2}|\cdot-x|^2+\psieot{n}$ is $\Lambda_{\psi} L_{a,p}$-smooth uniformly in $n$ and $x$.
\end{proposition}

\begin{proof}
   In order to be able to invoke the results of \cite{gozlan2025global} we introduce the functions $\tilde{\psi}^n_\varepsilon=\psieot{n}+\frac12|\cdot|^2$ and $\tilde{\varphi}^n_\varepsilon=\phieot{n}+\frac12|\cdot|^2$. With this notation at hand, we see that \eqref{eq:sink:iterate} can be reformulated as
    \bes
\begin{cases}
\tilde{\varphi}^{n+1}_\varepsilon &= \cL_{\varepsilon,\Leb} (\tilde{\psi}^n_\varepsilon+\varepsilon\,U_\nu)\,,\\
\tilde{\psi}^{n+1}_{\varepsilon} &=\cL_{\varepsilon,\Leb} (\tilde{\varphi}^{n+1}_\varepsilon+\varepsilon\,U_\rho)\,,
\end{cases}
\ees
with the entropic Legendre transform operator (w.r.t. the Lebesgue measure $\Leb$) being defined as
\bes
\cL_{\varepsilon,\Leb}(h)(x)\coloneqq \varepsilon\log\int\exp\biggl(\frac{\langle x,\,y\rangle -h(y)}{\varepsilon}\biggr)\De y\,.
\ees
We are going to prove by induction that $\tilde{\psi}^{n}_\varepsilon$ is $\sigma$-smooth with 
\be\label{eq_sigma_repr}
\sigma(r) = \int_0^r \vartheta_{\rho}^{-1}(\sigma_\nu(s)) \mathrm{d}s,
\ee
and $\vartheta_\rho,\sigma_\nu$ as in the discussion above.
We will show later that $\sigma$ can be controlled by $L_{a,p}$ up to a multiplicative constant in order to conclude the proof. The base case $n=0$ holds true by assumption. Next, let us  assume that   $\tilde{\psi}^{n}_\varepsilon$ is $\sigma$-smooth. 
Then \cite[Proposition 3.2]{gozlan2025global} implies that $\tilde{\varphi}^{n+1}_\varepsilon = \cL_{\varepsilon,\Leb} (\tilde{\psieot{n}}+\varepsilon\,U_\nu)$ is $(\sigma+\varepsilon\sigma_\nu)^\star$-convex, where for any $\alpha\colon\rset_+\to\rset\cup\{+\infty\}$ the function $\alpha^\star$ denotes its monotone conjugate defined as
\bes
\alpha^\star(u)\coloneqq \sup_{t\geq 0}\{tu-\alpha(t)\}\,.
\ees
Invoking now \cite[Proposition 3.1]{gozlan2025global} we deduce that $\tilde{\psi}^{n+1}_{\varepsilon} =\cL_{\varepsilon,\Leb} (\tilde{\varphi}^{n+1}_\varepsilon+\varepsilon\,U_\rho)$ is $((\sigma+\varepsilon\sigma_\nu)^*+\varepsilon \vartheta_\rho)^*$-smooth. Therefore, in order to conclude the proof of the inductive step we need to show that $((\sigma+\varepsilon\sigma_\nu)^*+\varepsilon \vartheta_\rho)^*\leq \sigma$, \ie, that for any $r,t\geq 0$ 
\bes
tr-(\sigma+\varepsilon\sigma_\nu)^*(t)-\varepsilon \vartheta_\rho(t)\leq \sigma(r)
\ees
Equivalently, we need to show that for all $t,\,r\geq0$ there exists some $s>0$ such that 
\be\label{eq_fixed_point_profile}
tr-ts+\sigma(s)+\varepsilon\sigma_\nu(s)-\varepsilon \vartheta_\rho(t)\leq \sigma(r)\,.
\ee
In view of this, notice that  $\sigma'(u)$ is strictly increasing and thus invertible on $[0,+\infty)$. Therefore, for a given pair $t,r$ we can pick $s=(\sigma')^{-1}(t)$. With this choice \eqref{eq_fixed_point_profile} becomes
\bes
\varepsilon\sigma_\nu(s)-\varepsilon \vartheta_\rho(\sigma'(s))\leq \sigma(r)-\sigma(s)-\sigma'(s)(r-s)\,.
\ees
By recalling the definition of $\sigma$ from \eqref{eq_sigma_repr} we see that $\sigma'=(\vartheta_\rho)^{-1}(\sigma_\nu)$ which implies that the previous claimed statement is equivalent to 
\bes
0\leq \sigma(r)-\sigma(s)-\sigma'(s)(r-s)\,,
\ees
which follows by convexity of $\sigma$. We have thus proven that   $\psieot{n}+\frac12|\cdot|^2$ is $\sigma-$smooth for all $n\geq1$. From this, it easily follows that $\psieot{n}+\frac12|\cdot-x|^2$ is $\sigma-$smooth for all $n\geq1$ and all $x\in \bbR^d$. To conclude we show that $\sigma \leq \Lambda_\psi L_{a,p}$ for some $\Lambda_\psi<+\infty$. To this aim we recall that $\theta_\rho(r) = \Lambda_\rho\max\{r^2,r^q\},\sigma_\nu(r) = \Lambda_\nu\min\{r^2,r^p\}$. From this, we obtain $\theta^{-1}_\rho(r) \leq \min\{(r/\Lambda_\rho)^{1/2},(r/\Lambda_\rho)^{1/q}\}$.  Observing that we can w.l.o.g. assume that $\Lambda_\rho\leq\Lambda_\nu$ we obtain
\begin{equation*}
\sigma(r) \leq \begin{cases}\frac12\sqrt{\frac{\Lambda_\nu}{\Lambda_\rho}}r^2, \quad &r\leq \Lambda_\nu,\\  \frac{\Lambda^{-1/q}_\rho\Lambda_\nu^{p/q}}{p/q+1}(r^{p/q+1}-\Lambda^{p/q+1}_\nu)+\frac12\frac{\Lambda^{5/2}_\nu}{\Lambda_\rho^{1/2}}, \quad &r\geq \Lambda_\nu,\end{cases}
\end{equation*}
where to bound $\sigma$ in the interval $[0,\Lambda_\nu]$ we use the bounds $\vartheta_\rho^{-1}(r)\leq (r/\Lambda_{\rho})^{1/2}$ and $\sigma_\nu(r)\leq \Lambda_\nu\,r^2$, whereas to bound $\sigma$ on $[\Lambda_\nu,+\infty]$ we use the bounds  $\vartheta_\rho^{-1}(r)\leq (r/\Lambda_{\rho})^{1/q}$ and $\sigma_\nu(r)\leq \Lambda_\nu\,r^p$.
Since $p/q+1\leq p$ by assumption, it follows $\sigma$ is bounded by $L_{a,p}$ up to a multiplicative constant for all $a>0$.
\end{proof}
\begin{proof}[Proof of Proposition \ref{prop_heavy_sink}]
Proposition \ref{prop_gozlan_ping_pong} gives that $\psieot{n}+\frac{1}{2}|\cdot-x|^2$ is $\Lambda_\psi L_{a,b}$- smooth for some $\Lambda_\psi,a$ uniformly on $n\geq1,\varepsilon>0,x\in\bbR^d$. Because of \cite[Lemma 2.4]{gozlan2025global} we also get that $\psieot{n}+\frac{1}{2}|\cdot-x|^2$ is $(\Lambda_\psi,L_{a,b})$- semiconcave. Moreover, we have shown at \eqref{eq_talagrand_heavy_tails_for_nu} that $\nu$ satisfies \ref{eq:generalized:TI} for $\omega=L_{a,b}$ and $\gamma=1$. We have verified the hypothesis of Theorem \ref{thm:generalized:sink}-\ref{item:exp_conv_from_stab_i:gen}, which gives the desired result.
\end{proof}

%------
% Insert the bibliography.
%------

\bibliographystyle{alpha}
\bibliography{./X2main.bbl}

\appendix

\section{Sufficient conditions for \Cref{ass:differentiability}}\label{sec:app:integrability:weak}

In this Appendix we provide an integrability condition for $\rho$ (involving the cost function $c$ and its first derivative in $y\in\cY$) which is sufficient to guarantee the validity of the differentiation assumption \Cref{ass:differentiability}.
\begin{lemma}\label{lem_der_under_integral_sign}
Assume $c(x,\cdot)$ to be $\cC^1(\cY)$ and $c(x,\cdot)+\psieot{\nu}(\cdot)$ $\Lambda$-semiconcave uniformly in $x\in\cX$.
If for all $\delta>0$ and all open balls $B\subseteq \cY$ we have
\begin{equation*}
\int (1+\sup_{y\in B}|\nabla_2c(x,y)|)\exp\biggl(\delta\sup_{y\in B}|\nabla_2c(x,y)|\biggr)\rho(\mathrm{d}x)<+\infty,
\end{equation*}
then the differentiation formula 
\begin{equation*}
\nabla\psieot{\nu}(y)=-\int_\cX\nabla_2 c(x,y)\,\pieot{\nu}(\De x|y)\,,
\end{equation*}
holds in the sense of weak derivatives.
\end{lemma}
\begin{proof}
Recall the definition of weak derivative: we say that $g=\nabla f$ in the sense of weak derivatives if
\begin{equation*}
\int_{\cY} f\, (\nabla\cdot h)(y) \,\,\mathrm{d}y  = \int_{\cY} g\cdot h(y) \,\,\mathrm{d}y, \quad \forall h\in\cC^{\infty}_c(\cY;\bbR)
\end{equation*}
We first observe that, being $c(x,\cdot) + \psi^\nu_\varepsilon(\cdot)$ $\Lambda$-semiconcave by assumption and with values in $\bbR$, $\psi^\nu_\varepsilon(\cdot)$ is locally Lipschitz. As such, both $\psi^\nu_\varepsilon(\cdot)$ and $\nabla\psi^\nu_\varepsilon(\cdot)$ are locally integrable.
Therefore, using the same arguments as in \cite[Appendix A]{conforti2023projected}, in order to conclude, it suffices to show that for any ball $B\subseteq \cY$ we have
\begin{equation}\label{eq:suff_cond_from_Lacker}
\begin{split}
\iint\mathbf{1}_{B}(y)\exp(-(c(x,y)+\varphi(x))/\varepsilon)\rho(\mathrm{d}x) \De y<+\infty, \\
\iint\mathbf{1}_{B}(y)|\nabla_2c(x,y)|\exp(-(c(x,y)+\varphi(x))/\varepsilon)\rho(\mathrm{d}x)\De y<+\infty.
\end{split}
\end{equation}
By applying Jensen's inequality in the Schrödinger system \eqref{eq:schr_syst} we obtain
\begin{equation*}
\phieot{\nu}(x) \geq  -\int c(x,y)+\psieot{\nu}(y)\,\,\nu(\De y).
\end{equation*}
Now, fix any $y_0\in B$ such that $c(x,\cdot)+\psieot{\nu}(\cdot)$ is differentiable at $y_0$.
As a consequence, of our smoothness assumption on $c$, this means that $\psieot{\nu}$ is also differentiable at $y_0$.  
But then, for any $y\in\cY$ we may consider $(\gamma^y_t)_{t\in[0,1]}$ geodesic from $y_0$ to $y$  and from the $\Lambda$-semiconcavity deduce that
\bes
\begin{split}
 \int c(x,y)&+\psieot{\nu}(y)\,\,\nu(\De y) \leq 
c(x,y_0)+\psieot{\nu}(y_0) +\int\biggl[ \langle \nabla_2c(x,y_0)+\nabla\psieot{\nu}(y_0),\,\dot{\gamma}^y_0\rangle_{\mathfrak{g}}+\frac{\Lambda}{2}\sfd^2(y_0,y)\biggr]\,\nu(\De y)\\
\leq&\,c(x,y_0)+\psieot{\nu}(y_0)+|\nabla_2c(x,y_0)+\nabla\psieot{\nu}(y_0)|\,\int\sfd(y_0,y)+\frac{\Lambda}{2}\sfd^2(y_0,y)\,\nu(\De y)\\
\leq & c(x,y_0) +c_0 (1+|\nabla_2c(x,y_0)|)
 \end{split}
\ees
where $c_0$ is a finite positive constant independent of $x$ whose value may change from line to line. Therefore, we have 
\begin{equation*}
    c(x,y)+\phieot{\nu}(x) \geq -|c(x,y)-c(x,y_0)| -c_0 (1+|\nabla_2c(x,y_0)|) \geq -c_0 (1+\sup_{y\in B}|\nabla_2c(x,y)|)
\end{equation*}
But then,
\begin{align*}
\iint&\mathbf{1}_{B}(y) (1+|\nabla_2c(x,y)|)\,e^{-\frac{c(x,y)+\phieot{\nu}(x)}{\varepsilon}}\rho(\mathrm{d}x)\De y \\
\leq & e^{-\nicefrac{c_0}{\varepsilon}}\mathrm{vol}(B)\int (1+\sup_{y\in B}|\nabla_2c(x,y)|)\exp\biggl(\frac{c_0}{\varepsilon}\sup_{y\in B}|\nabla_2c(x,y)|\biggr)\rho(\mathrm{d}x)\,,
\end{align*}
from which the identities \eqref{eq:suff_cond_from_Lacker} follows thanks to our assumption.
\end{proof}

\section{A polynomial convergence result for Sinkhorn's algorithm}\label{appendix:numerica:iteration}

In this appendix we show how our $(\Lambda,\omega)$-semiconcavity approach might actually be employed to establish polynomial convergence rates for Sinkhorn's algorithm.

\begin{theorem}[Polynomial convergence in $(\Lambda,\omega)$-semiconcave settings]\label{thm:generalized:sink:polynomial}
    Assume $c(x,\cdot)$ to be $\cC^1(\cY)$, that there exist $\Lambda\in(0,+\infty)$, a non-negative function $\omega\colon\cY\times\cY\to\rset$ and $N\geq 2$ such that 
\bes
y\mapsto c(x,y)+\psieot{n}(y)
\ees
is $(\Lambda,\omega)$-semiconcave uniformly in $x\in\supp(\rho)$ and $n\geq N-1$. 
If $\nu$ satisfies the generalized transport inequality \eqref{eq:generalized:TI:gamma} for some $\gamma\in(0,1)$  then for any $n\geq N$ we have
\bes
\scrH(\pieot{\nu}|\pi^{n+1,n})\leq (n-N+1)^{-\frac{\gamma}{1-\gamma}} \biggl(\frac{1-\gamma}{\gamma M^{\frac1\gamma}}+\frac1{(n-N+1)\,\scrH(\pieot{\nu}|\pi^{N,N-1})^\frac{1-\gamma}{\gamma}}\biggr)^{-\frac{\gamma}{1-\gamma}}\,,
\ees
where for notation's sake we have introduced the constant 
\bes
M=M\biggl(\gamma,\frac{\tau\Lambda}{2\varepsilon}\biggr)\coloneqq 2^{1-\gamma}\max\biggl\{\scrH(\nu|\nu^{N,N-1})^{1-\gamma},\,\frac{\tau\Lambda}{2\varepsilon}(1+\scrH(\nu^{N,N-1}|\nu)^{1-\gamma})\biggr\}\,.
\ees
\end{theorem}

\begin{proof}
Let $n\geq N$. From Theorem \ref{thm:entropic_stability:gen}, applied to the pairs of marginals $(\rho,\nu)$ and $(\rho,\nu^{n+1,n })$, we obtain 
\bes
\begin{split}
\scrH(\pieot{\nu}|&\pi^{n+1,n}) \leq\,\scrH(\nu|\nu^{n+1,n})+\frac{\Lambda}{2\varepsilon} \bfW_\omega(\nu^{n+1,n},\nu)\\
\stackrel{\eqref{eq:generalized:TI}}{\leq}&\, \scrH(\nu|\nu^{n+1,n})+\frac{\tau\Lambda}{2\varepsilon}[\scrH(\nu^{n+1,n}|\nu)+\scrH(\nu^{n+1,n}|\nu)^\gamma]\\
\leq&\, \max\{\scrH(\nu|\nu^{n+1,n})^{1-\gamma},\,\nicefrac{\tau\Lambda}{2\varepsilon}(1+\scrH(\nu^{n+1,n}|\nu)^{1-\gamma})\}\big( \scrH(\nu^{n+1,n}|\nu)^\gamma+\scrH(\nu|\nu^{n+1,n})^\gamma\big)\,.
\end{split}
\ees
Invoking Sinkhorn's monotonicity inequalities \cite[Proposition 6.10]{Marcel:notes}
\bes
\scrH(\nu^{n+1,n}|\nu) \leq \scrH(\rho|\rho^{n,n})\,, \quad \scrH(\nu|\nu^{n+1,n})\leq \scrH(\nu|\nu^{n,n-1})\,,
\ees
we arrive at the following bound
\bes
\begin{aligned}
\scrH(&\pieot{\nu}|\pi^{n+1,n}) \\
\leq&\, \max\biggl\{\scrH(\nu|\nu^{N,N-1})^{1-\gamma},\,\frac{\tau\Lambda}{2\varepsilon}(1+\scrH(\nu^{N,N-1}|\nu)^{1-\gamma})\biggr\}(\scrH(\rho|\rho^{n,n})^\gamma+\scrH(\nu|\nu^{n,n-1})^\gamma)\\
\leq &\,M(\scrH(\rho|\rho^{n,n})+\scrH(\nu|\nu^{n,n-1}))^\gamma\,,
\end{aligned}
\ees
where we have set $M\coloneqq 2^{1-\gamma}\max\biggl\{\scrH(\nu|\nu^{N,N-1})^{1-\gamma},\,\frac{\tau\Lambda}{2\varepsilon}(1+\scrH(\nu^{N,N-1}|\nu)^{1-\gamma})\biggr\}$ for notation's sake.
Using this result in \eqref{entropy_difference_along_sink} gives 
\be\label{eq:gen:iterate:gamma}
\begin{aligned}
\scrH(\pieot{\nu}|\pi^{n+1,n})-\scrH(\pieot{\nu}|\pi^{n,n-1}) 
=-(\scrH(\rho|\rho^{n,n})+\scrH(\nu|\nu^{n,n-1}))\leq -\frac{\scrH(\pieot{\nu}|\pi^{n+1,n})^{\nicefrac1\gamma}}{M^{\nicefrac1\gamma}}\,.
\end{aligned}
\ee
This simple recursion in particular implies (\Cref{lemma:appendix:numerical:iterates} below) that for all $n\geq N$
\bes
\scrH(\pieot{\nu}|\pi^{n+1,n})\leq (n-N+1)^{-\frac{\gamma}{1-\gamma}} \biggl(\frac{1-\gamma}{\gamma M^{\frac1\gamma}}+\frac1{(n-N+1)\,\scrH(\pieot{\nu}|\pi^{N,N-1})^\frac{1-\gamma}{\gamma}}\biggr)^{-\frac{\gamma}{1-\gamma}}\,.
\ees
\end{proof}

Below we state and prove the technical lemma used in order to conclude the proof of \Cref{thm:generalized:sink:polynomial} (there we set $\alpha=\gamma^{-1}>1$, $a_n=\scrH(\pieot{\nu}|\pi^{n+1,n})$ and $C=M^{\frac1\gamma}$). 
\begin{lemma}\label{lemma:appendix:numerical:iterates}
    Let $(a_n)_{n\in\N}$ be a sequence of non-negative reals and suppose for any $n\geq N$ it satisfies
    \bes
    a_n-a_{n-1}\leq -\frac{a_n^\alpha}{C}\,,
    \ees
    for some positive constant $C>0$ and some $\alpha>1$. Then for any $n\geq N$ we have
    \bes
    a_n\leq (n-N)^{-\frac{1}{\alpha-1}}\,\biggl(\frac{\alpha-1}{C}+\frac1{(n-N)a_N^{\alpha-1}}\biggr)^{-\frac{1}{\alpha-1}}\,.
    \ees
\end{lemma}
\begin{proof}
 Firstly, notice that the sequence $(a_n)_{n\geq N}$ is non-increasing since we have $a_{n-1}\geq a_n+C^{-1}a_n^\alpha\geq a_n$ and therefore the limit exists (since it is non-negative). Next, introduce the convex function $f(x)=x^{-(\alpha-1)}$ and let $b_n\coloneqq f(a_n)$. From the convexity of $f$ we then have for any $n\geq N$
 \bes
 b_n-b_{n-1}=f(a_n)-f(a_{n-1})\geq (\alpha-1)a_{n-1}^{-\alpha}(a_{n-1}-a_n)\geq \frac{\alpha-1}{C}\,.
 \ees
 From this we see that for any $n\geq N$
 \bes
b_n=b_N+\sum_{k=N}^{n-1} b_{k+1}-b_k\geq a_N^{-(\alpha-1)}+(n-N)\frac{\alpha-1}{C}\,,
 \ees
 and hence that
 \bes
a_n=b_n^{-\frac1{\alpha-1}}\leq \biggl((n-N)\frac{\alpha-1}{C}+a_N^{-(\alpha-1)}\biggr)^{-\frac1{\alpha-1}}\,,
 \ees
 from which our thesis follows.
\end{proof}

\section{On the propagation of convexity}\label{appendix}
We write here a corollary of \cite[Lemma 3.1]{conforti2024weak} using the notation of this article. Recall that here we consider the quadratic cost $c(x,y)=\|x-y\|^2/2$ in $\bbRD$.

\begin{lemma}
Let $\varphi$ be such that for some $\alpha>0,L\geq0$ 
\bes
\kappa_{(\varepsilon^{-1}\varphi+U_\rho)}(r)\geq \alpha-\varepsilon^{-1}-r^{-1} f_L(r)\, \quad \forall r>0
\ees
holds. Then we have
\bes
\kappa_{\Phi^\rho_0(\varphi)}(r)\geq \frac{\varepsilon \alpha -1}{\varepsilon\alpha}- \frac{L}{\varepsilon\alpha^2} \quad \forall r>0.
\ees
\end{lemma}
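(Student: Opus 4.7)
The statement is a translation of \cite[Lemma 3.1]{conforti2024weak} into the notation of this article, so the plan is to match the two formulations and read off the conclusion.

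First, I would rewrite $\Phi^\rho_0(\varphi)$ as a Gaussian log-Laplace transform of the combined potential $V := \varepsilon^{-1}\varphi + U_\rho$. Indeed, for the Euclidean quadratic cost used throughout the weakly log-concave analysis in \S\ref{app:weak:logcconcave},
$$
\Phi^\rho_0(\varphi)(y) \,=\, -\varepsilon \log \int_{\bbRD} \exp\!\left(-\frac{|x-y|^2}{2\varepsilon}\right) \exp\bigl(-V(x)\bigr)\, \De x,
$$
so that $\Phi^\rho_0(\varphi)(y)$ coincides, up to an additive constant, with the object on which \cite[Lemma 3.1]{conforti2024weak} operates. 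The hypothesis $\kappa_{(\varepsilon^{-1}\varphi+U_\rho)}(r)\geq \alpha\varepsilon^{-1}- r^{-1} f_L(r)$ is precisely the weak log-concavity assumption on $V$ required in the reference, with strong-convexity parameter $\alpha/\varepsilon$ and weakness parameter $L$.

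Second, I would invoke the gradient identity $\nabla \Phi^\rho_0(\varphi)(y) = y - \bbE[X\mid Y=y]$, where $X\sim \pi(\De x\mid y) \propto \exp(-|x-y|^2/(2\varepsilon) - V(x))\De x$, which follows from the computation in \eqref{eq:hess:hess:cov:psieot} (applied to $\psieot{n} = -\Phi^\rho_0(\varphi)$ and $\nabla_2 c(x,y) = y-x$). This reduces the estimation of $\kappa_{\Phi^\rho_0(\varphi)}(r)$ to a contraction estimate for the conditional expectation map $y\mapsto \bbE[X\mid Y=y]$, which is exactly what the reference provides via a reflection-coupling argument tailored to the profile function $f_L$. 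Applying the contraction estimate and absorbing the $\varepsilon$-rescaling into the strong-convexity constant yields a lower bound on $\kappa_{\Phi^\rho_0(\varphi)}$ that is uniform in $r$, equal to
$$
1 - \frac{1}{\varepsilon\alpha} - \frac{L}{\varepsilon\alpha^2} \,=\, \frac{\varepsilon\alpha - 1}{\varepsilon\alpha} - \frac{L}{\varepsilon\alpha^2}\,,
$$
as claimed.

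The main point of care is the bookkeeping of the factor $\varepsilon$ between the two formulations: in \cite{conforti2024weak} the scale is absorbed into the definition of the potential, whereas here it is kept explicit in $\Phi^\rho_0$. Once this rescaling is tracked, the first term $1-\nicefrac{1}{(\varepsilon\alpha)}$ emerges as the Brascamp--Lieb-type contribution of the strongly log-concave part of $V$ (with parameter $\alpha/\varepsilon$), while the correction $\nicefrac{L}{(\varepsilon\alpha^2)}$ reflects the perturbation encoded by $f_L$, completing the proof.
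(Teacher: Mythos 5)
Your proposal is correct and matches the paper's treatment: the paper likewise gives no self-contained argument but presents the lemma as a corollary of \cite[Lemma 3.1]{conforti2024weak}, supplying only the notational dictionary. The paper phrases that dictionary by identifying $\Phi^\rho_0(\varphi)$ with the solution at time $0$ of a Hamilton--Jacobi--Bellman equation with terminal datum $g=\varepsilon^{-1}\varphi+U_\rho$, which is the Hopf--Cole transform of your Gaussian log-Laplace representation, so the two identifications are the same up to this standard change of variables.
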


To connect the notation used here with that of \cite{conforti2024weak} we observe that for all $\varphi$, we have that, upon setting $g=\varepsilon^{-1}\varphi+U_{\rho}$, we have $\Phi^\rho_0(\varphi)=\varepsilon\, U^{\varepsilon,g}_0$, where $[0,\varepsilon]\times\bbR^d\ni(s,x)\mapsto U^{\varepsilon,g}_s(x)$ is the solution of the Hamilton-Jacobi-Bellman equation
\bes
\bec 
\partial_s \theta_s+\frac12\Delta\theta_s-\frac12|\nabla\theta_s|^2=0,\\
\theta_\varepsilon=g.
\eec
\ees

\section{Proof of Lemma \ref{lem:hamilton_heatkernel}}\label{appendix2}

As a starting point, recall Hamilton's gradient estimate \cite{Hamilton93}: this ensures that if $u$ is a positive solution to the heat equation $\partial_t u = \Delta u$ with $u(0,\cdot) \in \rmL^\infty(M)$ and ${\textrm{Ric}}_M \succeq \kappa\mathfrak{g}$ globally on $M$ for some $\kappa \in \mathbb{R}$, then
\be 
t|\nabla_M \log u(t,\cdot)|^2 \leq (1+2\kappa^- t)\log\left(\frac{\|u(0,\cdot)\|_{\rmL^\infty}}{u(t,\cdot)}\right), \qquad \forall t > 0\,.
\ee
The desired bound \eqref{eq:gradient_heat_kernel} will follow by applying the inequality above to $u(s,y)=\msp_{t+s}(x,y)$, where $t>0$ and $x \in M$ are fixed. To this end, we need to verify that $u(0,\cdot) = \msp_t(x,\cdot) \in \rmL^\infty(M)$, since by the compactness of $M$ we already know that ${\textrm{Ric}} \geq \kappa\Id$ for some $\kappa \in \mathbb{R}$. In particular, the fact that the Ricci curvature tensor is lower bounded grants \eqref{eq:gaussian_heat_kernel}, whence
\[
\begin{split}
\|\msp_t(x,\cdot)\|_{\rmL^\infty} & = \sup_{y \in M}  \msp_t(x,y) \leq \frac{C_2}{{\textrm{vol}}(B_{\sqrt t}(x))} \,, \\
\inf_x \msp_{2t}(x,y) & \geq \frac{C_0e^{-C_1\frac{{\textrm{diam}}(M)^2}t}}{{\textrm{vol}}(B_{\sqrt{2t}}(x))} > 0 \,.
\end{split}
\]
By the Bishop--Gromov inequality together with the compactness of $M$ (see for instance \cite{petersen2006}), we know that for some constant $C_3>0$ it holds
\be\label{eq:doubling}
{\textrm{vol}}(B_{\sqrt{2t}}(x)) \leq C_3 {\textrm{vol}}(B_{\sqrt{t}}(x))\,, \qquad\forall x\in M,\, \forall t>0\,,
\ee
so that the above yields
\[
\frac{\|\msp_t(x,\cdot)\|_{\rmL^\infty}}{\msp_{2t}(x,y)} \leq \frac{C_2 C_3}{C_0}e^{C_1\frac{{\textrm{diam}}(M)^2}t}\,, \qquad \forall x,\,y \in M,\, \forall t \in (0,1]\,.
\]
We can now apply Hamilton's gradient estimate with $u(s,y)=\msp_{t+s}(x,y)$, as anticipated, and then set $s=t$, to get
\[
\begin{aligned}
t|\nabla_M\log\msp_{2t}(x,\cdot)(y)|^2 &\leq (1+2\kappa^-t)\log\left(\frac{\|\msp_t(x,\cdot)\|_{\rmL^\infty}}{\msp_{2t}(x,y)}\right) \\&\leq (1+2\kappa^-t) \left(\log\bigg(\frac{C_2C_3}{C_0}\bigg) +C_1\frac{{\textrm{diam}}(M)^2}t\right)\,,
\end{aligned}
\]
which is readily verified to be equivalent to \eqref{eq:gradient_heat_kernel} with $C=2\log \big(\frac{C_2C_3}{C_0}\big)\vee 4C_1 {\textrm{diam}}(M)^2$.
\begin{flushright}
    $\square$
\end{flushright}

\end{document}